\tikzset{->-/.style={decoration={
  markings,
  mark=at position 0.5 with {\arrow{stealth}}},postaction={decorate}}}
\tikzset{->>-/.style={decoration={
  markings,
  mark=at position 0.5 with {\arrow{>>}}},postaction={decorate}}}
\tikzset{snake it/.style={decorate, decoration=snake}}
\newcolumntype{R}{>{$}r<{$}}
\newcolumntype{C}{>{$}c<{$}}
\newcolumntype{L}{>{$}l<{$}}
\definecolor{dark-red}{rgb}{0.4,0.15,0.15}
\definecolor{dark-blue}{rgb}{0.15,0.15,0.4}
\definecolor{medium-blue}{rgb}{0,0,0.5}
\LetLtxMacro{\amsmathdots}{\dots}
\let\ker\relax
\DeclareMathOperator{\ker}{Ker}
\DeclareMathOperator{\Aut}{Aut}
\DeclareMathOperator{\Hom}{Hom}
\DeclareMathOperator{\Sym}{Sym}
\DeclareMathOperator{\PGL}{PGL}
\newcommand*{\HS}{\text{HS}}
\newcommand*{\dpunct}[1]{\,\text{#1}}
\newcommand{\from}{\vcentcolon}
\newcommand{\dsum}{\oplus}
\newcommand{\dSum}{\bigoplus}
\newcommand{\tensor}{\otimes}
\NewDocumentCommand\xDeclarePairedDelimiter{mmm}
 {%
  \NewDocumentCommand#1{som}{%
   \IfNoValueTF{##2}
    {\IfBooleanTF{##1}{#2##3#3}{\mleft#2##3\mright#3}}
    {\mathopen{##2#2}##3\mathclose{##2#3}}%
  }%
 }
\xDeclarePairedDelimiter{\abs}{\lvert}{\rvert}
\xDeclarePairedDelimiter{\norm}{\lVert}{\rVert}
\xDeclarePairedDelimiter{\floor}{\lfloor}{\rfloor}
\xDeclarePairedDelimiter{\ceil}{\lceil}{\rceil}
\xDeclarePairedDelimiter{\gen}{\langle}{\rangle}
\xDeclarePairedDelimiter{\pseries}{\llbracket}{\rrbracket}
\xDeclarePairedDelimiter{\oneto}{[}{]}
\xDeclarePairedDelimiter{\parenth}{(}{)}
\NewDocumentCommand{\set}{somm}{%
   \IfNoValueTF{#2}
    {\IfBooleanTF{#1}{\{#3 \mid #4\}}{\mleft\{ #3 \mathrel{}\middle\vert\mathrel{} #4 \mright\}}}
    {\mathopen{#2\{}#3 \mathrel{}#2\vert\mathrel{} #4\mathclose{#2\}}}%
  }
\NewDocumentCommand{\present}{somm}{%
   \IfNoValueTF{#2}
    {\IfBooleanTF{#1}{\langle#3 \mid #4\rangle}{\mleft\langle#3 \mathrel{}\middle\vert\mathrel{} #4 \mright\rangle}}
    {\mathopen{#2\langle}#3 \mathrel{}#2\vert\mathrel{} #4\mathclose{#2\rangle}}%
  }
\NewDocumentCommand{\inner}{somm}{%
   \IfNoValueTF{#2}
    {\IfBooleanTF{#1}{\langle#3 , #4\rangle}{\mleft\langle#3 , #4 \mright\rangle}}
    {\mathopen{#2\langle}#3 , #4\mathclose{#2\rangle}}%
  }
\let\epsilon\varepsilon
\newcommand{\CC}{\mathbb{C}}
\newcommand{\LL}{\mathbb{L}}
\newcommand{\PP}{\mathbb{P}}
\newcommand{\QQ}{\mathbb{Q}}
\newcommand{\ZZ}{\mathbb{Z}}
\newcommand{\cA}{\mathcal{A}}
\newcommand{\cC}{\mathcal{C}}
\newcommand{\cH}{\mathcal{H}}
\newcommand{\cL}{\mathcal{L}}
\newcommand{\cO}{\mathcal{O}}
\newcommand{\cV}{\mathcal{V}}
\newcommand{\fm}{\mathfrak{m}}
\newcommand{\fS}{\mathfrak{S}}
\newcommand{\fX}{\mathfrak{X}}
\newcommand{\hol}{\mathrm{hol}}
\newcommand{\Symgr}{\Sym_{\mathrm{gr}}}
\newcommand{\id}{\mathrm{id}}
\newcommand*{\widebar}[1]{\mkern 1.5mu\overline{\mkern-1.5mu#1\mkern-1.5mu}\mkern 1.5mu}
\newcommand*{\cl}[1]{
\begingroup
    \setbox\z@=\hbox{\ensuremath{#1}}%
    \ifdimgreater{\wd\z@}{4em}{\mleft(#1\mright)^{-}}{\widebar{#1}}
\endgroup
}
\newcommand*{\interior}[1]{
\begingroup
    \setbox\z@=\hbox{\ensuremath{#1}}%
    \ifdimgreater{\wd\z@}{1.5em}{\mleft(#1\mright)^{\circ}}{\accentset{\circ}{#1}}
\endgroup
}
\newcommand*{\tolabel}[1]{\xrightarrow{\;#1\;}}
\newcommand{\isomto}{\xrightarrow{\,\;\smash{\raisebox{-0.4ex}{\ensuremath{\scriptstyle\cong}}}\;\,}}
\newcommand{\subto}{\hookrightarrow}
\newcommand{\isom}{\cong}
\newcommand{\equivto}{\xrightarrow{\,\;\smash{\raisebox{-0.6ex}{\ensuremath{\simeq}}}\;\,}}
\newcommand{\lequivto}{\xleftarrow{\,\;\smash{\raisebox{-0.6ex}{\ensuremath{\simeq}}}\;\,}}
\newcommand{\defeq}{\mathrel{\vcentcolon=}}
\numberwithin{equation}{section}
\declaretheorem[title=Theorem, refname={Theorem,Theorems}, Refname={Theorem,Theorems}]{maintheorem}
\declaretheorem[sibling=equation]{theorem}
\declaretheorem[sibling=theorem]{lemma}
\declaretheorem[sibling=theorem]{corollary}
\declaretheorem[sibling=theorem]{proposition}
\declaretheorem[sibling=theorem,refname={Conjecture,Conjectures}, Refname={Conjecture,Conjectures}]{conjecture}
\declaretheorem[numbered=no, title=Theorem]{theorem*}
\declaretheorem[numbered=no, title=Corollary]{corollary*}
\declaretheorem[numbered=no, title=Lemma]{lemma*}
\declaretheorem[numbered=no, title=Proposition]{proposition*}
\declaretheorem[numbered=no, title=Conjecture]{conjecture*}
\declaretheorem[sibling=theorem, style=definition, qed=$\lozenge$]{definition}
\declaretheorem[numbered=no, style=definition, title=Definition, qed=$\lozenge$]{definition*}
\declaretheorem[sibling=theorem, style=remark, qed=$\lozenge$]{remark}
\declaretheorem[numbered=no, style=remark, title=Remark, qed=$\lozenge$]{remark*}
\declaretheorem[sibling=theorem, style=remark, qed=$\lozenge$]{example}
\declaretheorem[numbered=no, style=remark, title=Example, qed=$\lozenge$]{example*}
\newcommand*{\Gr}{\mathrm{Gr}}
\DeclareMathOperator{\F}{F}
\DeclareMathOperator{\Z}{Z}
\DeclareMathOperator{\ev}{ev}
\newcommand{\cdga}{\textsc{cdga}}
\crefname{equation}{}{}
\Crefname{equation}{Equation}{Equations}
\newcommand{\MHSgr}{\text{MHS}^{\text{gr}}}
\newcommand{\bC}{\mathbb C}
\newcommand{\bL}{\mathbb L}
\newcommand{\bP}{\mathbb P}
\newcommand{\bQ}{\mathbb Q}
\newcommand{\bR}{\mathbb R}
\newcommand{\bZ}{\mathbb Z}
\newcommand{\lra}{\longrightarrow}
\newcommand{\catTop}{\mathsf{Top}}
\newcommand{\catF}{\mathsf{F}}
\newcommand{\CCH}{\check{H}_{c}}
\newcommand{\op}{\text{op}}
\newcommand{\Sing}{\mathrm{Sing}}
\newcommand{\Gammahol}{\Gamma_{\hol}}
\newcommand{\Gammacon}{\Gamma_{\cC^0}}
\DeclareMathOperator*{\colim}{colim}
\DeclareMathOperator{\map}{map}
\newcommand\longmapsfrom{\mathrel{\reflectbox{\ensuremath{\longmapsto}}}}
\newtheorem{construction}[theorem]{Construction}
\newtheorem{convention}[theorem]{Convention}
\DeclareMathOperator{\APL}{A_{PL}}
\newcommand{\diff}{\mathrm{d}}
\title{Homological stability for the space of hypersurfaces with marked points}
\author{Alexis Aumonier \and Ronno Das}
\begin{document}
\maketitle

\begin{abstract}
We study the space of smooth marked hypersurfaces in a given linear system.
Specifically, we prove a homology h-principle to compare it with a space of sections of an appropriate jet bundle.
Using rational models, we compute its rational cohomology in a range of degrees, and deduce a homological stability result for hypersurfaces of increasing degree. 
We also describe the Hodge weights on the stable cohomology, and thereby connect our topological result to motivic results of Howe.
\end{abstract}

\tableofcontents

\section{Introduction}

Consider a connected smooth complex projective variety $X$ with a line bundle $\cL$.
Let $U(\cL) \subset \Gammahol(\cL)$ be the open subset of \emph{non-singular} holomorphic global sections of $\cL$ and consider the incidence variety:
\[
    Z(\cL) = \set{(f, x) \in U(\cL) \times X}{f(x) = 0} \subset \Gammahol(\cL) \times X \dpunct.
\]
If we take the sections modulo scalar multiples, ie quotient by the action of $\CC^\times$, we get the space of \emph{$\cL$-hypersurfaces} given by $U(\cL)/\CC^\times$ and the \emph{universal non-singular $\cL$-hypersurface} given by $Z(\cL)/\CC^\times$.
In another language, these spaces are respectively the open locus of smooth divisors inside the complete linear system $|\cL|$ and the restriction of the universal flat family above it.
In rational homology the $\CC^\times$ often behaves like a direct factor, see \cref{scalar-multiple}, and we will not worry about this quotient in the rest of the paper.

More generally, let $\Z^r(\cL)$ be the bundle over $U(\cL)$ whose fiber over $f$ is $\F^r(\cV(f))$, the configuration space of $r$ distinct points on the vanishing locus $\cV(f)$ of $f$, topologized as a subspace of $U \times X^r$.
Explicitly,
\[
\Z^r(\cL) = \set[\big]{(f, x_1, \dotsc, x_r) \in \Gammahol(\cL) \times X^r}{f \in U(\cL),\ f(x_i) = 0,\ x_i \ne x_j \text{ for } i \ne j}
\]
is the \emph{space of $\cL$-hypersurface sections with $r$ marked points}.

\begin{maintheorem}\label{maintheorem:stability}
Suppose $\cL$ is ample and fix $r \ge 1$.
Then for each $i$ and for $d$ sufficiently large\footnote{It suffices to take $d > \max(\abs*{\chi(X)}, k(2i + 2r + 3))$, with $k$ such that $\cL^{\tensor k}$ is very ample, by combining the bounds in \cref{maintheorem:comparison,euler-class-bound}.} we have an isomorphism
\[H^i(\Z^r(\cL^{\tensor d}); \QQ) \isom H^i(A_r(X, c_1(\cL))) \dpunct,\]
where $A_r(X, c_1(\cL))$ is a commutative differential graded algebra that is \emph{independent of $d$} and explicitly described in \cref{stable-cdga}.
This isomorphism is $\fS_r$ equivariant and preserves cup products (when $d$ is appropriately large).
In particular, $H^i(\Z^r(\cL^{\tensor d}); \QQ)$ stabilizes with $d$, as an $\fS_r$-representation.
\end{maintheorem}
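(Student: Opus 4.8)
The plan is to deduce the statement from the homology h-principle of \cref{maintheorem:comparison} together with an explicit rational model computation, the point being that the resulting answer is manifestly independent of $d$ in the stated range. Concretely, \cref{maintheorem:comparison} provides a scanning/jet-prolongation map from $\Z^r(\cL^{\tensor d})$ to the space $\cF^{(d)}_r$ of \emph{formal solutions}, which I take to be the space of pairs consisting of a nowhere-vanishing section over $X$ of the $1$-jet bundle $J^1(\cL^{\tensor d})$ and an ordered configuration of $r$ points of $X$ at which the $0$-jet of the section vanishes; this map induces an isomorphism on $H^i(-;\QQ)$ in a range of degrees growing linearly with $d$. The space $\cF^{(d)}_r$ fibers over the ordered configuration space $\F^r(X)$, the fiber over $(x_1,\dotsc,x_r)$ being the space of nowhere-vanishing sections of $J^1(\cL^{\tensor d})$ whose $0$-jet vanishes at each $x_j$. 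Writing $n=\dim_\CC X$, the complement of the zero section in $J^1(\cL^{\tensor d})$ has fiber $\CC^{n+1}\setminus 0\simeq S^{2n+1}$, whereas over a marked point the admissible locus contracts onto $\CC^{n}\setminus 0\simeq S^{2n-1}$, namely the complement of the zero section of $\Omega^1_X\tensor\cL^{\tensor d}$. As these are rationally odd-dimensional sphere bundles, they are classified by their Euler classes: that of $J^1(\cL^{\tensor d})$ lies in $H^{2n+2}(X;\QQ)=0$, while that of $\Omega^1_X\tensor\cL^{\tensor d}$ is $c_n(\Omega^1_X\tensor\cL^{\tensor d})\in H^{2n}(X;\QQ)$, which is nonzero for $d$ large by \cref{euler-class-bound}.

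To compute $H^*(\cF^{(d)}_r;\QQ)$ I would assemble a Sullivan model: apply Haefliger's model for spaces of sections of nilpotent bundles fiberwise over $\F^r(X)$ — legitimate because the fibers are rationally formal odd spheres — and glue over the base by an Eilenberg--Moore argument, using the Kriz--Totaro model for $H^*(\F^r(X);\QQ)$ and the formality of $X$. The outcome is a \cdga{} $M^{(d)}$ computing $H^*(\cF^{(d)}_r;\QQ)$, whose only dependence on $d$ is through characteristic classes of $\cL^{\tensor d}$-twists of fixed bundles on $X$ — most importantly the Euler class $c_n(\Omega^1_X\tensor\cL^{\tensor d})$ — which enter as differentials of the sphere-bundle generators. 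By \cref{euler-class-bound}, for $d$ past a fixed threshold these classes are nonzero and their leading term in $d$ is a nonzero multiple of the corresponding power of $c_1(\cL)$, so a change of variables absorbing the scalars and the lower-order corrections carries $M^{(d)}$, uniformly in such $d$, onto one fixed \cdga, the algebra $A_r(X,c_1(\cL))$ of \cref{stable-cdga}. Combining the exact model computation with the range-restricted isomorphism of \cref{maintheorem:comparison} yields $H^i(\Z^r(\cL^{\tensor d});\QQ)\isom H^i(A_r(X,c_1(\cL)))$ as soon as $d>\max(\abs{\chi(X)},k(2i+2r+3))$; independence of the right-hand side from $d$ is then precisely the asserted homological stability.

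All of the constructions above are functorial in $X$ and in the configuration, so the $\fS_r$-action permuting the marked points is carried through and the isomorphism is $\fS_r$-equivariant; multiplicativity holds because the comparison of \cref{maintheorem:comparison} is a genuine map of spaces and Haefliger's model computes the cohomology ring, the mild shrinkage of range for the product statement being just the requirement that classes of degrees $i$, $j$ and $i+j$ all lie in the h-principle range simultaneously. The step I expect to be the main obstacle is the middle one: making the Haefliger--Kriz model of the constrained section space precise and functorial — in particular pinning down how the evaluation maps at the moving marked points interact with the Kriz--Totaro diagonal classes — and then verifying that after the rescaling the resulting \cdga{} is genuinely $d$-independent in the stated range and coincides with the combinatorial description of $A_r(X,c_1(\cL))$ in \cref{stable-cdga}.
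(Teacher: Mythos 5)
Your proposal is correct and follows essentially the same route as the paper: reduce via \cref{maintheorem:comparison} to the continuous space $\Z^r_{\cC^0}(\cL^{\tensor d})$, compute its rational cohomology by an Eilenberg--Moore argument over the rationally trivial sphere bundle $(J^1(\cL)-0)^r|_{\F^r(X)}$ using Haefliger's model for the section space and the Kriz--Totaro model $C_r(X)$, observe that the only $d$-dependence enters through $e(\Omega^1(\cL^{\tensor d}))$ and $c_1(\cL^{\tensor d})$, and remove it by exactly the rescaling isomorphism you describe (this is \cref{rational-model-stabilizes}, with non-vanishing of the Euler class from \cref{euler-class-non-vanishing}). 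The step you flag as the main obstacle is precisely where the paper works in \cref{rational-model-proof} (the evaluation classes $\epsilon_i$ via \cref{lemma:pullback-generator-evaluation} and the Moore--Postnikov/relative Sullivan model of the inclusion in \cref{lemma:model-inclusion}), so your outline matches the paper's proof.
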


The above theorem follows a series of results on geometric and arithmetic stability of the family $\Z^r(\cL)$ as $\cL$ is made increasingly ample, notably the main theorems in \cite{poonen_bertini_2004}, \cite{vakil_discriminants_2015} and \cite{howe_motivic_2019}.
Nearer to our cohomological results is the computation of the rational cohomology of $U(\cL)$ in a range of stability, which can and should be thought of as the case $r = 0$, for $X = \PP^n$ by Tommasi \cite{tommasi_stable_2014}, for $X$ a curve by O.~Banerjee \cite[Section~4]{banerjee_filtration_2019}, and for general $X$ in \cite{aumonier_h-principle_2022} and \cite{das_cohomological_2022}.

The classes in $H^i(\Z^r(\cL^{\tensor d}); \bQ)$ for different $d$ that are identified by these isomorphisms also have the same Hodge weights (see \cref{weights-on-stable-cohomology}).
The consequent stabilization of Hodge--Euler characteristics (\cref{hodge-euler-characteristic}) was observed by Howe \cite{howe_motivic_2019}.

\begin{remark}
In fact, the \cdga{} $A_r(X, c_1(\cL))$ only depends on the integer $r$, the rational cohomology ring of $X$, its Poincaré pairing and the projectivized Chern class $[c_1(\cL)] \in \bP H^2(X)$. 
\end{remark}

\begin{remark}
Under the assumptions of \cref{maintheorem:stability} we do not have an example where the cohomology ring $H^*(A_r(X, c))$ actually depends on the class $c$.
However the construction of $A_r(X, c)$ does not require $c$ to be represented by an element in the ample cone.
Without this restriction $H^*(A_r(X, c))$ can in fact depend on $c$, see \cref{non-ample-c1-dependence}.
\Cref{one-point-stability} also shows that the cohomology ring $H^*(A_1(X, c))$ \emph{does not} depend on $c$ when $r = 1$.

In general, the dependence of $A_r(X, c)$ on $c$ is only in terms of its differential, ie as an algebra $A_r(X, c)$ does not depend on $c$.
This is reflected in the observation that the analogs of \cref{maintheorem:stability} in terms of point counts and classes in the Grothendieck ring (see \cite[Remark~1.15(i)]{vakil_discriminants_2015} and \cite{howe_motivic_2019}) have no such dependence, since these invariants are akin to the Euler characteristic.
\end{remark}

\begin{remark}
The description of $A_r(X, c_1(\cL))$ is amenable to explicit computations; see \cref{sec:computations}.
\end{remark}

There are no obvious maps between $\Z^r(\cL^{\tensor d})$ for varying $d$ and the stability in \cref{maintheorem:stability} is (to our knowledge) not induced by such maps of spaces.
Nevertheless, the isomorphism in \cref{maintheorem:stability} is not just an abstract isomorphism once we ascribe more geometric meaning to $A_r(X, c)$. In doing so, we also manage to uncover the Hodge weights in the cohomology of $\Z^r(\cL^{\tensor d})$, as we shall explain below in \cref{hodge-euler-characteristic}. 

Our proof of \cref{maintheorem:stability} involves identifying $A_r(X, c)$ as a rational model for a continuous analog of $\Z^r(\cL^{\otimes d})$.
Explicitly, let $J^1(\cL) \cong \cL \oplus \Omega^1(\cL)$ be the first-order jet bundle of $\cL$ and define
\[
    \Z^r_{\cC^0}(\cL) \defeq \set{(s,x_1,\dotsc,x_r) \in \Gamma_{\cC^0}(J^1(\cL)) \times X^r}{s(x) \neq 0 \ \forall x ,\ s(x_i) \in \Omega^1(\cL),\ x_i \ne x_j \text{ for } i \ne j}
\]
by imitating the construction of $\Z^r(\cL)$ above while replacing $U(\cL)$ by the space $\Gamma_{\cC^0}(J^1(\cL)-0)$ of non-vanishing continuous sections; see \cref{sec:jet-bundles,sec:section-spaces} for precise definitions.
There is a \emph{jet-expansion map}
\[
    j^1 \from \Gammahol(\cL) \to \Gammacon(J^1(\cL))
\]
which induces a map $\Z^r(\cL) \to \Z^r_{\cC^0}(\cL)$ by the restriction of $(j^1 \times \id) \from \Gammahol(\cL) \times X^r \to \Gammacon(J^1(\cL)) \times X^r$, which we will also call a jet-expansion map and continue to denote by $j^1$.

Recall that if $\cL$ is very ample then $\cL^{\tensor d}$ is $d$-jet ample (see \cref{defn:jet-ample} of jet ampleness).
Then the following theorem, combined with the computation of $H^*(\Z^r_{\cC^0}(\cL); \QQ)$ from \cref{serre-ss-degeneration} and \cref{rational-model-stabilizes}, refines \cref{maintheorem:stability}:
\begin{maintheorem}\label{maintheorem:comparison}
Let $r \ge 1$ and $\cL$ be $d$-jet ample for some $d > 2i + 2r + 3$. Then the jet-expansion map $j^1$ induces an isomorphism
\[
    H_i(\Z^r(\cL); \ZZ) \isomto H_i(\Z^r_{\cC^0}(\cL); \ZZ)
\]
in integral homology.
\end{maintheorem}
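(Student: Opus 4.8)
The plan is to realise both $\Z^r(\cL)$ and $\Z^r_{\cC^0}(\cL)$ as open complements of ``marked discriminants'' inside bundles over the configuration space $\F^r(X)$ that the jet-expansion map identifies, and then to compare Vassiliev-type resolutions of these discriminants; this is the marked-point enhancement of the $r=0$ case treated in \cite{aumonier_h-principle_2022,das_cohomological_2022}. Concretely, let $W^r(\cL) \subset \Gammahol(\cL) \times \F^r(X)$ be the locus of tuples $(f, x_1, \dotsc, x_r)$ with $f(x_i) = 0$ for all $i$ (dropping the non-singularity requirement), and let $W^r_{\cC^0}(\cL) \subset \Gammacon(J^1(\cL)) \times \F^r(X)$ be the locus with $s(x_i) \in \Omega^1(\cL)$ for all $i$. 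Since $\cL$ is $d$-jet ample with $d \ge r$, the evaluation map $\Gammahol(\cL) \to \bigoplus_i \cL_{x_i}$ is surjective for every configuration, so $W^r(\cL) \to \F^r(X)$ is a complex vector bundle; its continuous counterpart is a bundle with contractible affine fibres, and $j^1$ is a fibrewise map over $\F^r(X)$, hence a homotopy equivalence $W^r(\cL) \isomto W^r_{\cC^0}(\cL)$, both being models for $\F^r(X)$. Inside these, $\Z^r(\cL)$ and $\Z^r_{\cC^0}(\cL)$ are the open subsets where $\cV(f)$ is smooth, respectively where $s$ is nowhere zero, and writing $\Sigma^r$ and $\Sigma^r_{\cC^0}$ for the closed complements we have $\Sigma^r = (j^1 \times \id)^{-1}(\Sigma^r_{\cC^0})$, since $f$ is singular at a point $y$ exactly when $j^1 f(y) = 0$. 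By the long exact sequences of the pairs $(W^r, \Z^r)$ and $(W^r_{\cC^0}, \Z^r_{\cC^0})$ and the five-lemma, \cref{maintheorem:comparison} reduces to showing that $j^1$ induces an isomorphism $H_j(W^r(\cL), \Z^r(\cL); \ZZ) \to H_j(W^r_{\cC^0}(\cL), \Z^r_{\cC^0}(\cL); \ZZ)$ for $j \le i+1$. Since $\Gammacon(J^1(\cL))$ is infinite-dimensional, on the continuous side one first exhausts it by finite-dimensional spaces of sections (for instance sections affine on the cells of a fine triangulation of $X$), checking as in \cite{aumonier_h-principle_2022} that the discriminant, the duality below, and the homology in the relevant range behave compatibly with this exhaustion.

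Next, Alexander--Lefschetz duality --- applied fibrewise over $\F^r(X)$ and assembled via the Leray spectral sequence of the projection --- relates $H_*(W^r(\cL), \Z^r(\cL))$ to the Borel--Moore homology of the discriminant $\Sigma^r$, up to a degree shift by the fibre dimension, and likewise on the continuous side; so it suffices to compare $\Sigma^r$ with $\Sigma^r_{\cC^0}$ in a corresponding range. To that end, resolve each discriminant by recording a finite nonempty set of bad points: let $\lVert \Sigma^r_\bullet \rVert$ be the geometric realisation of the semisimplicial space whose space of $p$-simplices consists of tuples $(f, x_1, \dotsc, x_r, y_0, \dotsc, y_p)$ with the $y_k$ pairwise distinct and $f$ singular at every $y_k$, and define the continuous analogue with ``$s(y_k) = 0$'' in place of ``$f$ singular at $y_k$''. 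The resolutions are compatible with $j^1$, their augmentations induce isomorphisms on Borel--Moore homology (using the finite approximation for the required properness), and the skeletal filtrations give spectral sequences, converging after the duality reindexing to $H_*(W^r, \Z^r)$ and to $H_*(W^r_{\cC^0}, \Z^r_{\cC^0})$, with comparison map induced by $j^1$.

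The $p$-th column of either spectral sequence is the Borel--Moore homology of a bundle over the space of configurations of $r$ marked points and $p+1$ pairwise distinct unordered ``singular'' points of $X$, whose fibre over a configuration is the affine space of sections satisfying the pointwise conditions there: holomorphic $f$ with $j^1 f(y_k) = 0$ and $f(x_i) = 0$, respectively continuous $s$ with $s(y_k) = 0$ and $s(x_i) \in \Omega^1(\cL)$. The continuous fibre is an affine subspace, hence contractible. The holomorphic fibre is an affine subspace of the \emph{expected} dimension --- hence also contractible --- precisely when $\Gammahol(\cL)$ surjects onto $\bigoplus_k J^1(\cL)_{y_k} \oplus \bigoplus_i \cL_{x_i}$, and $d$-jet ampleness of $\cL$ provides exactly this once $2(p+1) + r \le d+1$. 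A dimension count --- the codimension of the $p$-th resolution stratum grows by roughly $2$ per added singular point --- shows that only strata with $p \le j-2 \le i-1$ can affect $H_j(W^r, \Z^r)$ in degrees $j \le i+1$, so that jet-ampleness is only ever invoked to an order linear in $i$ and $r$; a careful accounting of the constants, including the $+1$ from the five-lemma and the behaviour of the finite approximation, produces the threshold $d > 2i + 2r + 3$ of the statement. In that range both $E^1$-pages reduce to the same twisted Borel--Moore homology of the configuration space, with the same local coefficient system (fixed by the orientations of the affine fibres together with the $\fS_{p+1} \times \fS_r$-symmetry permuting the singular and the marked points), and $j^1$ induces an isomorphism between them. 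Comparing the two spectral sequences and unwinding the reductions of the first step then proves \cref{maintheorem:comparison}.

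The step I expect to be the main obstacle is the honest implementation of the finite-dimensional approximation on the continuous side, together with making the Alexander-duality shifts, the resolution filtration and the jet-ampleness threshold fit together to yield \emph{precisely} the claimed range. In practice this means upgrading the $r=0$ h-principle of \cite{aumonier_h-principle_2022} so that its configuration spaces carry two species of points and its evaluation maps prescribe $1$-jets at the singular points and values at the marked points simultaneously --- which is also where the dependence of the bound on $r$ comes from.
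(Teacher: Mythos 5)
Your route is genuinely different from the paper's (a global ``marked discriminant'' inside the bundle $W^r(\cL) \to \F^r(X)$, reduction via the long exact sequence of pairs and duality, then a Vassiliev resolution), but as written it has a concrete gap: in your resolution the singular points $y_k$ are allowed to collide with the marked points $x_i$. At such a collision the condition $j^1f(y_k)=0$ subsumes $f(x_i)=0$ (and on the continuous side $s(y_k)=0$ subsumes $s(x_i)\in\Omega^1(\cL)_{x_i}$), so the affine fibres of the resolution strata jump in dimension and the $p$-th column is \emph{not} a bundle over the space of configurations of $r$ marked and $p+1$ singular points with fibres of constant, expected codimension. A count shows a collision costs only $2(n-1)$ real dimensions relative to the generic stratum --- in particular for curves ($n=1$) the collision strata have the \emph{same} dimension as the generic ones --- so these pieces contribute to the $E^1$-page in the relevant range and your identification of both $E^1$-pages with ``the same twisted Borel--Moore homology of the configuration space'', as well as the bookkeeping behind the threshold $d>2i+2r+3$, are not correct as stated. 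The comparison can probably be repaired by further stratifying by collision patterns and matching the two sides stratum by stratum (jet ampleness still gives the expected codimension on each piece), but that analysis is missing. A second soft spot, which you flag yourself, is the finite-dimensional approximation of $\Gammacon(J^1(\cL))$: piecewise-affine sections on a triangulation do not obviously have the evaluation-surjectivity (``ampleness'') needed to control the continuous-side strata --- already three affinely dependent points in one simplex obstruct surjectivity of evaluation of affine sections --- whereas the paper's approximation by products of holomorphic sections with conjugate holomorphic sections, combined with Stone--Weierstrass, is engineered to retain exactly this property.

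For contrast, the paper removes the collision problem at the outset by changing the base: instead of mapping to $\F^r(X)$ it maps $\Z^r(\cL)$ and $\Z^r_{\cC^0}(\cL)$ to $(\Omega^1(\cL)-0)^r|_{\F^r(X)}$, recording the (necessarily nonzero) derivatives at the marked points. The fibres are then spaces $U(\vec v)$ of holomorphic sections with prescribed nonzero $1$-jets at the $x_i$, whose singularities automatically lie in $X-\{x_1,\dots,x_r\}$, so the fibrewise Vassiliev argument involves a single species of points (at the price of a tube-lemma properness argument on the non-compact punctured variety). Since the algebraic map to this base is only a Serre microfibration, the base-direction comparison is done not with a pairs/duality argument but with the Weiss--Raptis theorem, adapted to homology by fibrewise double suspension (\cref{adapted-microcomparison}). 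If you pursue your global route, you will essentially be rebuilding by hand the collision stratification that this change of base renders unnecessary.
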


Although \cref{maintheorem:comparison} is an integral statement, we focus on the more manageable rational computations in this paper. Thus for the rest of the introduction we will use cohomology with rational coefficients but will suppress it from the notation for the sake of brevity and readability.

Theorem~1.1 in \cite{aumonier_h-principle_2022} should be thought of as the $r = 0$ case of \cref{maintheorem:comparison}, comparing just the space $U(\cL)$ of non-singular sections, without any markings, with the analogous continuous section space $\Gamma_{\cC^0}(J^1(\cL)-0)$.
Given this result and the definition of $\Z^r(\cL)$, it seems reasonable to use the Serre spectral sequence of the fiber bundle $\Z^r(\cL) \to U(\cL)$, which for $r = 1$ in fact degenerates on the $E_2$ page (by Deligne's degeneration theorem, see \cref{deligne-degeneration} for an application).
The terms of this spectral sequence are given by the cohomology of $U(\cL)$ with certain \emph{local coefficients}, with stalk $H^*(\cV(f))$ at $f \in U(\cL)$.
However, the usual strategy for tackling the cohomology of $U(\cL)$ (in \cite{aumonier_h-principle_2022} and elsewhere) is to pass to the complement in $\Gammahol(\cL)$ using Alexander duality and any non-trivially twisted coefficients on $U(\cL)$ cannot possibly extend to this contractible space.

In contrast, what actually lets us make headway is the other projection $\Z^r(\cL) \to \F^r(X)$ to the configuration space of $X$.
While we do get a Serre spectral sequence for the fibration $\Z^r_{\cC^0}(\cL) \to \F^r(X)$, on the algebraic side the map $\Z^r(\cL) \to \F^r(X)$ may not be\footnote{At least not in general, excepting when $\Aut(X)$ acts transitively on $\F^r(X)$.} a fibration.
It is however a \emph{microfibration} (see \cref{defn:microfibration}).
In \cref{sec:microfibration} we use this fact to reduce the proof of \cref{maintheorem:comparison} to a comparison of the fibers of $\Z^r(\cL)$ and $\Z^r_{\cC^0}(\cL)$ above $\F^r(X)$. 
To be more precise, we establish a fiberwise homology isomorphism (in a range of degrees) in \cref{h-principle-prescribed-derivatives}, after replacing the map $\Z^r(\cL) \to \F^r(X)$ by the map $\Z^r(\cL) \to (\Omega^1_X \tensor \cL)^r|_{\F^r(X)}$,
\[(f, x_1, \dots, x_r) \mapsto (\mathrm{d}f(x_1), \dotsc, \mathrm{d}f(x_r)) \dpunct,\]
that records not only the marked points but also the (necessarily non-zero) derivatives of the section at each of these points.

\subsection{Marking a single point}
\label{sec:one-point}

For $r = 1$, the space $\Z^1(\cL) = Z(\cL)$ is the ``universal $\cL$-hypersurface''.
This case has a mildly different flavor than $r > 1$, partly because the fiber $\cV(f)$ is projective.
In this case the computation of $H^*(A_1(X, c))$ is relatively simple and we get the following explicit description, whose proof we will defer to \cref{one-point-stability-proof}.

\begin{corollary}\label{one-point-stability}
Suppose $\cL$ is $d$-jet ample. Writing $\Omega^1(\cL) - 0$ for the complement of the zero section in $\Omega^1(\cL) = \Omega^1_X \tensor \cL$, we have the following isomorphism in the range $* < \frac{d - 5}{2}$:
\begin{equation*}
H^*(Z(\cL); \bQ) \isom H^*\big(\Omega^1(\cL) - 0; \bQ\big) \tensor \Symgr\big(H^{*<2n}(X)[-1]\big) \dpunct.
\end{equation*}
\end{corollary}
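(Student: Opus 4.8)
The plan is to deduce this from the already-stated refinement of the main theorem (\cref{maintheorem:comparison}) together with the rational-model computation, by specializing the general machinery to $r = 1$ and exploiting the extra structure available when a single point is marked. First I would invoke \cref{maintheorem:comparison}: since $\cL$ is $d$-jet ample, for $* < \frac{d-5}{2}$ (equivalently $2i + 2r + 3 < d$ with $r = 1$, so $i < \frac{d-5}{2}$) the jet-expansion map gives $H^*(Z(\cL)) \isom H^*(\Z^1_{\cC^0}(\cL))$. So it suffices to compute the rational cohomology of the continuous model $\Z^1_{\cC^0}(\cL)$ in this range, and to check it matches the stated tensor product; this computation is the content of $H^*(A_1(X,c))$ promised in \cref{stable-cdga}, but for $r=1$ it should be transparent enough to do by hand.

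Next I would analyze the fibration $\Z^1_{\cC^0}(\cL) \to X$ given by the projection to the single marked point $x_1$. The fiber over $x \in X$ consists of continuous sections $s$ of $J^1(\cL) \cong \cL \oplus \Omega^1(\cL)$ that are everywhere non-vanishing and whose value at the chosen point $x$ lies in the subspace $\Omega^1(\cL)_x$ (i.e.\ the $\cL_x$-component vanishes at $x$). The key observation is that the condition "$s(x) \in \Omega^1(\cL)_x$, $s(x) \neq 0$" contributes a factor homotopy equivalent to $\Omega^1(\cL)_x - 0 \simeq S^{2n-1}$ (the unit sphere in a rank-$n$ complex vector space), while the remaining freedom in choosing $s$ away from $x$, non-vanishing, contributes the "space of sections of $J^1(\cL) - 0$" which by the $r=0$ analysis (Theorem 1.1 of \cite{aumonier_h-principle_2022} and \cref{serre-ss-degeneration}) has rational cohomology $\Symgr(H^{*<2n}(X)[-1])$ in the stable range — this is exactly the $\Symgr$ factor appearing in the statement. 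Globalizing over $X$, the sphere factor assembles into the total space $\Omega^1(\cL) - 0$ (the sphere bundle of $\Omega^1_X \tensor \cL$ over $X$, whose cohomology already incorporates $H^*(X)$ via the Gysin sequence), and the section-space factor remains. I would then argue the Serre spectral sequence of $\Z^1_{\cC^0}(\cL) \to X$ degenerates — here $r = 1$ is special, as the excerpt notes, because Deligne's degeneration theorem applies on the algebraic side (\cref{deligne-degeneration}) — yielding the claimed isomorphism $H^*(\Z^1_{\cC^0}(\cL)) \isom H^*(\Omega^1(\cL)-0) \tensor \Symgr(H^{*<2n}(X)[-1])$, and tracking ranges to confirm $* < \frac{d-5}{2}$ is the correct stable range after combining the two bounds.

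The main obstacle, I expect, is making the "decoupling" precise: showing that the section-space degrees of freedom (governing the $\Symgr$ factor) and the pointwise-derivative degrees of freedom (governing the sphere-bundle factor $\Omega^1(\cL)-0$) genuinely split as a tensor product rather than interacting through the differential of the \cdga{} $A_1(X, c)$. The general $A_r(X,c)$ carries a $c$-dependent differential, and the corollary is implicitly asserting this differential is trivial when $r = 1$ — consistent with the remark in the excerpt that $H^*(A_1(X,c))$ does not depend on $c$. I would establish this either by a direct inspection of the model in \cref{stable-cdga} in the case $r = 1$, checking that the relevant differential vanishes for degree/Hodge-weight reasons, or by the spectral-sequence degeneration argument above, which forces the associated graded to be the honest tensor product and then a formality/multiplicativity check (the cup-product statement in \cref{maintheorem:stability}) upgrades it to an algebra isomorphism. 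A secondary bookkeeping point is to verify that restricting the ``$H^{*<2n}(X)$'' truncation — coming from the fact that the generic fiber $\cV(f)$ is an $(n-1)$-dimensional hypersurface whose cohomology agrees with that of $X$ only below the middle — is compatible with passing to the continuous model; this is handled by the same comparison already used in the $r = 0$ case.
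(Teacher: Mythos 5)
Your first step---reducing to the continuous model via \cref{maintheorem:comparison} (which for $r=1$ gives exactly the range $*<\frac{d-5}{2}$) and then computing $H^*(A_1(X,c))$ by hand---is the same strategy the paper follows. But the computation you then sketch has two genuine problems. First, a conflation in the fiber: the fiber of $\Z^1_{\cC^0}(\cL)\to X$ over $x$ is the pullback of the evaluation fibration $\Gammacon(J^1(\cL)-0)\to J^1(\cL)_x-0$ along $\Omega^1(\cL)_x-0\subset J^1(\cL)_x-0$; since this inclusion is null-homotopic the fiber is indeed a product of $S^{2n-1}$ with a section space, but that section space is the space of sections with \emph{pinned} value at $x$, whose rational cohomology is $\Symgr(H^{*<2n}(X)[-1])$, not the full section space of the $r=0$ case, whose cohomology is the untruncated $\Symgr(H^{*}(X)[-1])$. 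The truncation to $*<2n$ is precisely the effect of the pinning (in the \cdga{}, the cancellation of the generator $s[X]$ against $\eta$), and has nothing to do with the Lefschetz-type statement about $\cV(f)$ that you cite as the ``secondary bookkeeping point.''

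Second, and decisively, the degeneration claim is false and its justification misapplied. The Serre spectral sequence of $\Z^1_{\cC^0}(\cL)\to X$ has $E_2\isom H^*(X)\tensor H^*(S^{2n-1})\tensor\Symgr(H^{*<2n}(X)[-1])$ and carries the transgression $d_{2n}(\alpha)=e(\Omega^1(\cL))$, which is nonzero for $\cL$ sufficiently ample (\cref{euler-class-non-vanishing}); if the sequence degenerated, the answer would be $H^*(X)\tensor H^*(S^{2n-1})\tensor\Symgr(\cdots)$, which differs from the asserted $H^*(\Omega^1(\cL)-0)\tensor\Symgr(\cdots)$ exactly when $e(\Omega^1(\cL))\neq 0$. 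Deligne's theorem cannot rescue this: it applies to the smooth \emph{projective} morphism $Z(\cL)\to U(\cL)$, whereas $Z(\cL)\to X$ has non-proper fibers and $\Z^1_{\cC^0}(\cL)\to X$ is not algebraic at all. Likewise, your fallback assertion that the differential of $A_1(X,c)$ is trivial is incorrect: it has $d\alpha=e(\Omega^1(\cL))$ and $d\eta=\epsilon-c\alpha$ (with $\epsilon$ containing the summand $s[X]$). What actually happens---and what the paper's proof does---is that one cancels the pair $(\eta, s[X])$ using \cref{lemma:homological-algebra,lemma:exact-multiplication}, which removes all dependence on $c$ and leaves the \cdga{} $\bigl(H^*(X)[\alpha],\,d\alpha=e(\Omega^1(\cL))\bigr)\tensor\Symgr(H^{*<2n}(X)[-1])$; the first factor is the Gysin--Koszul model of $\Omega^1(\cL)-0$, so the Euler-class differential is retained and absorbed into $H^*(\Omega^1(\cL)-0)$ rather than degenerated away. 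Replacing your degeneration step by this cancellation (or by the Serre spectral sequence over $\Omega^1(\cL)-0$ rather than over $X$, with the Euler class kept in the base) repairs the argument.
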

Here, and in what will follow, $n$ is the complex dimension of $X$ and $H^{*<2n}(X)[-1]$ is the graded vector space $\dSum_{i = 0}^{2n - 1} H^i(X)[-1]$, ie classes in $H^i(X)$ for $i < 2n$ are placed in degree $i + 1$.

\begin{remark}
Since $Z(\cL) \to U(\cL)$ is a smooth projective bundle, Deligne's degeneration theorem (see \cite{deligne_theoreme_1968}) implies that $H^*(U(\cL)) \to H^*(Z(\cL))$ must be injective.
We can verify explicitly that each algebra generator of $\Symgr(H^*(X)[-1])$ maps to a non-zero class under the identifications in \cref{one-point-stability}, ie in the range where both \cref{maintheorem:stability,one-point-stability} apply. 
For a generator $sb \in H^{*<2n}(X)[-1]$ corresponding to a shift of a $b \in H^{*<2n}(X)$, its image is also a free generator, so is non-zero by definition.
On the other hand, the shifted top class $s[X] \mapsto \tau - \epsilon$, where $\tau \in H^{2n + 1}(\Omega^1(\cL) - 0)$ is the class whose image under fiber integration (or the Gysin homomorphism) is $c_1(\cL) \in H^2(X)$ and $\epsilon$ is as in the proof of \cref{one-point-stability}. 
In particular, each term is non-zero and they lie in distinct direct summands.
\end{remark}

In the range $* < 2n - 2$ (and for $d$ sufficiently large) \cref{one-point-stability} follows from Nori's connectivity theorem \cite{nori_algebraic_1993}.
Taking $X = \PP^n$, we recover the stabilization (and stable cohomology) obtained by I.~Banerjee in \cite{banerjee_stable_2023}.

\begin{corollary}[cf {\cite[Theorem~1.1]{banerjee_stable_2023}}]
If $X = \PP^n$, $r = 1$ and $\cL = \cO(d)$ then 
\[H^*(Z(\cL); \QQ) \isom H^*(\PP^{n-1} \times U(\cL); \QQ)\]
in the range of degrees $* < \frac{d-5}{2}$.
\end{corollary}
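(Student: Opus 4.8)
The statement is a specialization of Corollary \ref{one-point-stability} to $X = \PP^n$ and $\cL = \cO(d)$, so the plan is simply to unwind the general formula in that case and identify the resulting graded vector space (and then algebra) with $H^*(\PP^{n-1} \times U(\cL))$. I would invoke Corollary \ref{one-point-stability} directly: since $\cO(d)$ on $\PP^n$ is $d$-jet ample, in the range $* < \frac{d-5}{2}$ we have
\[
H^*(Z(\cL)) \isom H^*\bigl(\Omega^1(\cL) - 0\bigr) \tensor \Symgr\bigl(H^{*<2n}(\PP^n)[-1]\bigr).
\]
So the task reduces to two identifications: (i) computing $H^*(\Omega^1_{\PP^n}(d) - 0)$, and (ii) recognizing the factor $\Symgr(H^{*<2n}(\PP^n)[-1])$ as $H^*(U(\cO(d)))$ in the relevant range.

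For (ii), the range-restricted stable cohomology of $U(\cL)$ is exactly the $r=0$ case alluded to in the introduction (Tommasi's computation for $\PP^n$, recovered here via the $r=0$ analog of \cref{maintheorem:comparison} and the rational model $A_0$). Concretely, $H^*(U(\cO(d)))$ stabilizes to $\Symgr(H^{*<2n}(\PP^n)[-1])$ — this is precisely the $r = 0$ specialization of the machinery of \cref{stable-cdga}, and it is what \cref{one-point-stability} is implicitly using on the nose. So I would state that $\Symgr(H^{*<2n}(\PP^n)[-1]) \isom H^*(U(\cL))$ in degrees $* < \frac{d-5}{2}$ and cite the appropriate earlier result. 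For (i), I compute $H^*(\Omega^1_{\PP^n}(d) - 0)$ via the Gysin sequence of the sphere-bundle (complement of the zero section) of the rank-$n$ bundle $\Omega^1_{\PP^n}(d)$ over $\PP^n$: there is a long exact sequence relating $H^*(\PP^n)$, $H^{*-2n}(\PP^n)$, and $H^*(\Omega^1_{\PP^n}(d)-0)$, with connecting map given by cup product with the Euler class $e(\Omega^1_{\PP^n}(d)) \in H^{2n}(\PP^n)$. The Euler class is the top Chern class $c_n(\Omega^1_{\PP^n}(d))$; writing $h$ for the hyperplane class, $c(\Omega^1_{\PP^n}) = (1-h)^{n+1}(1+h)^{-1}\cdot(\text{correction})$ — more simply, from $0 \to \Omega^1_{\PP^n} \to \cO(-1)^{n+1} \to \cO \to 0$ one gets $c(\Omega^1_{\PP^n}) = (1-h)^{n+1}$, hence $c(\Omega^1_{\PP^n}(d)) = \sum_k \binom{n+1}{k}(-1)^k h^k (1 + (d)h)^{\,n-k}$ after the twist, and $c_n$ is a nonzero multiple of $h^n$ for $d$ large (indeed for $d \ge 1$ one checks the leading coefficient is nonzero). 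Since $H^{2n}(\PP^n) = \QQ\cdot h^n$ is one-dimensional and the Euler class is a nonzero multiple of $h^n$, cup product with it is an isomorphism $H^0(\PP^n) \to H^{2n}(\PP^n)$ but zero on $H^{>0}(\PP^n)$; chasing the Gysin sequence then yields $H^*(\Omega^1_{\PP^n}(d)-0) \isom H^*(\PP^{n-1})$ as graded vector spaces — the class $h \in H^2$ survives, its powers up to $h^{n-1}$ survive, $h^n$ is killed, and the Gysin generator in degree $2n+1$ is also killed (it maps onto the relation). I should double-check the exact bookkeeping: the sequence is $\cdots \to H^{k-2n-1}(\PP^n) \xrightarrow{\cup e} H^{k-1}(\PP^n) \to H^{k-1}(\Omega^1(d)-0)\oplus(\cdots) \to H^{k-2n}(\PP^n)\xrightarrow{\cup e}\cdots$, and the upshot is the truncation $H^*(\PP^n)/(h^n) \isom H^*(\PP^{n-1})$ with no surviving top-degree Gysin class because $\cup e$ is injective from $H^0$.

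Combining (i) and (ii): in the range $* < \frac{d-5}{2}$,
\[
H^*(Z(\cL)) \isom H^*(\PP^{n-1}) \tensor H^*(U(\cL)) \isom H^*(\PP^{n-1} \times U(\cL))
\]
by the Künneth theorem, which is what we want; the cup-product statement follows because all isomorphisms in sight (Corollary \ref{one-point-stability}, the Gysin identification, Künneth) are ring maps in the appropriate range. The only genuinely substantive point — and the one I'd spend the most care on — is the Euler class computation showing $c_n(\Omega^1_{\PP^n}(d)) \ne 0$ in $H^{2n}(\PP^n)$ and hence that the Gysin differential behaves as claimed; everything else is formal. (One could alternatively observe $\Omega^1_{\PP^n}(d) - 0$ deformation retracts appropriately, but the Gysin computation is cleanest and also reproves the ``shifted top class'' remark preceding the corollary.)
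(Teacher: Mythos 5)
There is a genuine gap here: both of your intermediate identifications are incorrect, and the final formula only comes out right because the two errors cancel. First, in (i): $\Omega^1_{\PP^n}(d)$ has complex rank $n$, so the sphere bundle you are applying Gysin to has fiber $S^{2n-1}$ and its fiber class $a$ sits in degree $2n-1$, not $2n+1$. Since the Euler class is a non-zero multiple of $h^n$, cup product with it is an isomorphism $H^0(\PP^n)\to H^{2n}(\PP^n)$ and vanishes on $H^{>0}(\PP^n)$; chasing the sequence then kills $h^n$ and the fiber class itself, but the classes corresponding to $h\cdot a, h^2\cdot a,\dotsc,h^n\cdot a$ \emph{survive}, in degrees $2n+1,2n+3,\dotsc,4n-1$. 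Hence $H^*(\Omega^1(\cL)-0)\isom H^*(\PP^{n-1})\tensor \Lambda(t)$ with $\abs{t}=2n+1$ --- exactly what the paper extracts from the model $\QQ[x,\alpha]/x^{n+1}$, $d\alpha=x^n$ --- and it is \emph{not} $H^*(\PP^{n-1})$. Second, in (ii): the stable cohomology of $U(\cL)$ is $\Symgr(H^*(\PP^n)[-1])$, which has a generator $s[X]$ in degree $2n+1$; it is not $\Symgr(H^{*<2n}(\PP^n)[-1])$, which is missing the exterior factor $\Lambda(s[X])$. This discrepancy is not absorbed by the range restriction, since degree $2n+1$ lies inside $*<\frac{d-5}{2}$ once $d$ is large.

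Each of these two slips deletes one odd generator of degree $2n+1$, so your Poincar\'e series happens to match the correct one, but neither step is true as stated and the argument would not survive checking. The repaired version is the paper's proof: use $H^*(U(\cL))\isom\Symgr(H^*(\PP^n)[-1])$ in the stable range, compute $H^*(\Omega^1(\cL)-0)\isom H^*(\PP^{n-1})\tensor\Symgr(\QQ\gen*{t})$ with $\abs{t}=2n+1$, and observe that the extra class $t$ produced by \cref{one-point-stability} pairs off with $s[X]$, i.e.\ $\Lambda(t)\tensor\Symgr\big(H^{*<2n}(\PP^n)[-1]\big)\isom\Symgr\big(H^*(\PP^n)[-1]\big)\isom H^*(U(\cL))$, which yields $H^*(Z(\cL))\isom H^*(\PP^{n-1})\tensor H^*(U(\cL))$ as claimed.
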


\begin{proof}
Using that $H^*(U(\cL)) \isom \Symgr(H^*(X)[-1])$ under these assumptions, it suffices to identify 
\[H^*(\Omega^1(\cL) - 0) \isom H^*(\PP^{n-1}) \tensor \Symgr(\QQ\gen*{t}) \dpunct,\]
where $|t| = 2n + 1$, for instance by directly computing the cohomology of the \cdga{} 
\[\QQ[x_2, \alpha_{2n-1}]/x^{n+1}\dpunct, \quad dx = 0\dpunct,\ d\alpha = x^n \dpunct,\]
where the subscripts denote the degrees of the respective generators.
\end{proof}

\begin{remark}
Our range is slightly worse that the one given in \cite[Theorem~1.1]{banerjee_stable_2023}, namely $* < \frac{d-1}{2}$, though it could potentially be improved by running our proof of \cref{maintheorem:stability} in the special case $X = \bP^n$. 
\end{remark}

\begin{remark}
\Cref{one-point-stability} gives us an exact criterion for when the stable cohomology of $Z(\cL)$ is finite dimensional: $H^i(Z(\cL))$ vanishes for $i$ large and $\cL$ sufficiently jet ample (depending on $i$) if and only if $H^*(X)$ is concentrated in even degrees.
For instance, this holds if $X$ is $\PP^n$ or a Grassmannian but fails if $X$ is a curve of positive genus.
In contrast, for $r > 1$ the stable cohomology is necessarily non-zero in infinitely many degrees for any $X$. This is already visible in the weightwise Euler characteristic described in \cref{hodge-euler-characteristic} and was noted in \cite{howe_motivic_2019} as a required feature.
\end{remark}

The fiber bundle $\pi \colon Z(\cL) \to U(\cL)$ produces a family $\cH^q \defeq R^q\pi_*\bQ$ of locally constant sheaves over $U(\cL)$, one for each $q \ge 0$ and with stalk $H^q(\cV(f))$ over $f \in U(\cL)$.
By the Lefschetz hyperplane theorem, $\cH^q$ is constant for $q \ne n - 1$, and for $q < n - 1$ is isomorphic to the constant sheaf $H^i(X)$ via the map induced by the inclusion $Z(\cL) \subset U(\cL) \times X$.
On the other hand, $\cH^{n - 1}$ is usually not constant.
The Serre spectral sequence of $Z(\cL) \to U(\cL)$ has signature
\begin{equation} \label{deligne-degeneration-formula}
E_2^{p, q} = H^p(U(\cL); \cH^q) \implies H^*(Z(\cL)) \end{equation}
and degenerates immediately by Deligne's degeneration theorem.
Define the stable (twisted) Betti numbers 
\[
    \rho_p \defeq \lim_{d \to \infty} \dim_\QQ H^p(U(\cL^{\tensor d}); \cH^{n - 1}) \dpunct.
\]
where $\cL$ is any ample line bundle.
Equating the Poincaré series of the two sides of \cref{deligne-degeneration-formula} and plugging in our computation of $H^*(Z(\cL^{\tensor d}))$, we get:
\begin{corollary}\label{deligne-degeneration}\pushQED{\qed}
Let $P(t)$ be the Poincaré series of $\Symgr(H^{*<2n}(X)[-1])$ and let $\beta_i = \dim H^i(X)$ be the Betti numbers of $X$.
Then
\[\sum_{p \ge 0} \rho_p t^p = P(t) \left[\sum_{i = 0}^{n} \beta_{i + n - 1} t^i - \sum_{i = 1}^{n - 1} \beta_{i + n + 1}t^i + \sum_{i = n+1}^{2n+1} \beta_{i - n} t^i - \sum_{i = n + 2}^{2n} \beta_{i - n - 2} t^i\right] \dpunct. \qedhere\]
\popQED
\end{corollary}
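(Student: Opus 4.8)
The plan is to compute the Poincaré series of both sides of the degenerate Serre spectral sequence \eqref{deligne-degeneration-formula} and solve for the generating function $\sum_p \rho_p t^p$. Since the spectral sequence degenerates at $E_2$, in the stable range we have a graded-vector-space identity
\[
    \sum_n \dim H^n(Z(\cL^{\tensor d})) \, t^n \;=\; \sum_{p,q} \dim H^p(U(\cL^{\tensor d}); \cH^q)\, t^{p+q}
\]
valid in degrees $* < \tfrac{d-5}{2}$, so passing to the limit $d \to \infty$ turns this into an identity of formal power series truncated to ever-larger degree, hence an honest identity of power series in $t$. The left-hand side is $P(t) \cdot Q(t)$ where $Q(t)$ is the Poincaré series of $H^*(\Omega^1(\cL) - 0)$, by \cref{one-point-stability} together with the fact that $H^*(U(\cL^{\tensor d}))$ stabilizes to $\Symgr(H^{*<2n}(X)[-1])$ (this is the $r=0$ case referenced in the introduction). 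So the first task is to pin down $Q(t)$ explicitly.

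For $Q(t)$: the space $\Omega^1(\cL) - 0$ is the complement of the zero section in a rank-$n$ complex vector bundle over $X$, so it is the total space of a sphere bundle with fiber $S^{2n-1}$. Its Gysin sequence over $X$ reads
\[
    \cdots \to H^{k-2n}(X) \xrightarrow{\,\cup\, e\,} H^{k}(X) \to H^{k}(\Omega^1(\cL)-0) \to H^{k-2n+1}(X) \xrightarrow{\,\cup\, e\,} H^{k+1}(X) \to \cdots
\]
where $e = e(\Omega^1(\cL)) = c_n(\Omega^1_X \tensor \cL)$ is the Euler class, a top-degree class in $H^{2n}(X)$, hence a scalar multiple of the fundamental class; under the ampleness hypotheses this scalar is nonzero (it equals $\chi(X)$ up to the relevant normalization, matching the "$\tau - \epsilon$" bookkeeping in the preceding remark), so cup product with $e$ is an isomorphism $H^0(X) \isomto H^{2n}(X)$ and zero in all other degrees. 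Feeding this into the Gysin sequence gives, degree by degree, a short exact sequence $0 \to H^k(X) \to H^k(\Omega^1(\cL)-0) \to \ker(\cup e \colon H^{k-2n+1} \to H^{k+1}) \to 0$ when $k < 2n$, and the analogous identification with a cokernel when $k \ge 2n$; concretely
\[
    Q(t) \;=\; \sum_{i=0}^{2n-1}\beta_i t^i \;+\; \sum_{i=1}^{2n}\beta_i t^{\,i+2n-1},
\]
i.e. $H^*(\Omega^1(\cL)-0)$ is "$H^{*<2n}(X)$ in low degrees, plus a shifted copy of $H^{*\ge 1}(X)$ in high degrees." (One should double-check the exact index ranges against the convention $H^{*<2n}(X)[-1]$; the bracketed expression in the statement is exactly $t \cdot Q(t)/P(t) \cdot$(something), so the four sums there must be $t^{-1}Q(t)$ reorganized.)

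The final step is algebraic: the right-hand side of the displayed identity is $\sum_q t^q \sum_p \rho_p t^p \cdot(\text{contribution of }\cH^q)$, but by the Lefschetz hyperplane theorem $\cH^q$ is the constant sheaf $H^q(X)$ for $q \neq n-1$ and for $q > n-1$ it is constant equal to $H^{q+2}(X)$ (the primitive part shifted), while $\cH^{n-1}$ is the only non-constant sheaf, contributing $t^{n-1}\sum_p \rho_p t^p$. Summing the constant contributions gives $P(t)$ times a Laurent-polynomial correction built from the $\beta_i$, and subtracting this from $P(t)Q(t)$ and dividing by $P(t)\,t^{n-1}$ isolates $\sum_p \rho_p t^p$, yielding the claimed formula; the four sums in the statement are precisely $t^{-(n-1)}$ times [the expansion of $Q(t)$ minus the constant-sheaf contributions]. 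The main obstacle I anticipate is purely bookkeeping: getting the degree shifts and the index ranges in the four sums exactly right, in particular correctly accounting for the "$-2$" shift coming from the hard Lefschetz decomposition of $H^{n-1+2k}(\cV(f))$ into primitive pieces and making sure the constant part of $\cH^{n-1}$ (the image of $H^{n-1}(X)$) is not double-counted — there is no conceptual difficulty once the degeneration and the computation of $Q(t)$ are in hand.
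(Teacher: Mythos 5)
Your overall strategy is exactly the paper's (implicit) proof: degeneration of \eqref{deligne-degeneration-formula} gives an equality of Poincaré series in the stable range, the left-hand side is $P(t)Q(t)$ by \cref{one-point-stability} with $Q(t)$ the Poincaré series of $H^*(\Omega^1(\cL)-0)$ computed from the Gysin sequence, the columns $q\ne n-1$ are constant-coefficient and one solves for $\sum_p\rho_p t^p$; your computation $Q(t)=\sum_{i=0}^{2n-1}\beta_i t^i+\sum_{i=1}^{2n}\beta_i t^{i+2n-1}$ and your identification of the stalks of $\cH^q$ ($H^q(X)$ for $q<n-1$, $H^{q+2}(X)$ for $q>n-1$) are both correct. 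However, there is one concrete error which, as written, would derail the bookkeeping you explicitly left unchecked: the stable cohomology of $U(\cL^{\tensor d})$ is $\Symgr(H^{*}(X)[-1])$, \emph{not} $\Symgr(H^{*<2n}(X)[-1])$ --- the top class $[X]\in H^{2n}(X)$ contributes an exterior generator $s[X]$ in degree $2n+1$ (see \eqref{eqn:stablecohomology-free-gca} and the remark following \cref{one-point-stability}), so the stable Poincaré series of $U(\cL^{\tensor d})$ is $P(t)(1+t^{2n+1})$, not $P(t)$. This factor enters exactly where your formula is decided: writing $D(t)=\sum_{q=0}^{n-2}\beta_q t^q+\sum_{q=n}^{2n-2}\beta_{q+2}t^q$ for the total stalk contribution of the constant columns, the degenerate spectral sequence gives $P(t)Q(t)=P(t)(1+t^{2n+1})D(t)+t^{n-1}\sum_p\rho_p t^p$, hence $\sum_p\rho_p t^p=P(t)\,t^{-(n-1)}\bigl[Q(t)-(1+t^{2n+1})D(t)\bigr]$, and expanding this does reproduce the four sums of the corollary; the extra term $t^{2n+1}D(t)$ is precisely what produces the fourth sum $-\sum_{i=n+2}^{2n}\beta_{i-n-2}t^i$ and truncates the third sum at $i=2n+1$. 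With your stated input (constant columns weighted by $P(t)$ alone) the third sum would run to $i=3n$ and the fourth sum would be absent, i.e.\ you would not recover the stated formula. Note also that the $r=0$ stabilization is genuinely needed here (for the right-hand side), whereas the left-hand side needs only \cref{one-point-stability}.

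Two smaller points. The Euler number of $\Omega^1(\cL^{\tensor d})$ is not $\chi(X)$ up to normalization; it is a degree-$n$ polynomial in $d$ with leading term $c_1(\cL)^n[X]\,d^n$ (\cref{euler-class-non-vanishing}) --- but only its nonvanishing for $d$ large is used, so this is harmless. And there is no ``double-counting'' issue for $\cH^{n-1}$: by definition $\rho_p$ is the stable dimension of $H^p(U(\cL^{\tensor d});\cH^{n-1})$ for the \emph{full} sheaf, so the $q=n-1$ column contributes exactly $t^{n-1}\sum_p\rho_p t^p$; the splitting off of the vanishing cohomology only appears in the remark after the corollary.
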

Note that 
\[P(t) = \prod_{i = 0}^{2n - 1} (1 - (-t)^{i + 1})^{(-1)^i \beta_i}\]
gives an explicit description of the right hand side as a rational function in $t$ with coefficient determined by $\beta_i$.
Further, since $\beta_i \le \beta_{i + 2}$ for $i \le n - 1$ and $\beta_i \ge \beta_{i + 2}$ for $i \ge n - 1$ (by the hard Lefschetz theorem), the second factor has non-negative coefficients.

\begin{remark}
When $X = \bP^n$, the sum on the right simplifies to either $0$ if $n$ is even, or $1 + t^{2n+1}$ if $n$ is odd. In either case, the stable cohomology of $U(\cL)$ with coefficients in the vanishing cohomology $\cH^{n-1}_\mathrm{van}$ vanishes. This recovers \cite[Corollary~1.3]{banerjee_stable_2023}.
\end{remark}

\subsection{Unordered marked points}
While the configuration spaces in the rest of the paper are ordered, let us deal with the unordered case here.
The symmetric group $\fS_r$ acts on $\Z^r(\cL) \subset \Gammahol(\cL) \times \F^r(X)$ by permuting the coordinates of $\F^r(X)$.
The map $\Z^r(\cL) \to U(\cL)$ descends to the quotient: the map $\Z^r(\cL)/\fS_r \to U(\cL)$ is also a fiber bundle, now with fiber $\F^r(\cV(f))/\fS_r$, the \emph{unordered} configuration space of $\cV(f)$, over $f \in U(\cL)$.
The analog of \cref{maintheorem:comparison} holds with the same proof, or with rational coefficients we can just use the transfer isomorphisms
\[H^*(\Z^r(\cL)/\fS_r; \QQ) \isom H^*(\Z^r(\cL); \QQ)^{\fS_r} \dpunct.\]
In particular we have the following analog of \cref{maintheorem:stability}:
\begin{corollary}
If $\cL$ is ample and $d$ is sufficiently large then
\[H^i(\Z^r(\cL^{\tensor d})/\fS^r; \QQ) \isom H^i(A_r(X, c)^{\fS_r}) \dpunct.\]
In particular, $\dim H^i(\Z^r(\cL^{\tensor d})/\fS_r; \QQ)$ stabilizes for large $d$. \qed
\end{corollary}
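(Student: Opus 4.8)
The plan is to deduce this directly from \cref{maintheorem:stability} together with the transfer isomorphism for the action of the finite group $\fS_r$ on $\Z^r(\cL^{\tensor d})$. Since $\fS_r$ permutes the points of the ordered configuration space $\F^r(X)$ freely and $|\fS_r| = r!$ is invertible in $\QQ$, the quotient map $\Z^r(\cL^{\tensor d}) \to \Z^r(\cL^{\tensor d})/\fS_r$ induces, for every $i$, an isomorphism
\[
    H^i(\Z^r(\cL^{\tensor d})/\fS_r; \QQ) \isom H^i(\Z^r(\cL^{\tensor d}); \QQ)^{\fS_r} \dpunct,
\]
by the standard averaging/transfer argument for rational cohomology of quotients by finite group actions.

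First I would invoke \cref{maintheorem:stability} in its equivariant form: for $d$ sufficiently large (depending on $i$ and $r$) there is an $\fS_r$-equivariant, cup-product-preserving isomorphism $H^i(\Z^r(\cL^{\tensor d}); \QQ) \isom H^i(A_r(X, c_1(\cL)))$. Taking $\fS_r$-invariants of both sides and combining with the transfer isomorphism above gives
\[
    H^i(\Z^r(\cL^{\tensor d})/\fS_r; \QQ) \isom H^i(A_r(X, c_1(\cL)))^{\fS_r} \dpunct.
\]
Next I would commute invariants past cohomology: because $\QQ[\fS_r]$ is semisimple (Maschke), the functor $(-)^{\fS_r}$ on $\QQ[\fS_r]$-modules is exact, so it commutes with passing to cohomology of the cochain complex $A_r(X, c)$; moreover $A_r(X, c)^{\fS_r}$ is a sub-\cdga{} whose cohomology algebra is the invariant subalgebra $H^*(A_r(X,c))^{\fS_r}$. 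Hence $H^i(A_r(X,c))^{\fS_r} \isom H^i(A_r(X,c)^{\fS_r})$, which yields the displayed isomorphism in the statement. The stabilization assertion is then immediate, since the right-hand side is manifestly independent of $d$.

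There is no serious obstacle here: all the substantive work is already contained in \cref{maintheorem:stability}. The only points needing a little care are that both the transfer isomorphism and the exactness of $(-)^{\fS_r}$ use that $r!$ is invertible in $\QQ$, which is precisely why the statement is with rational coefficients, and that one must track the algebra structure through these reductions so that "cup products are preserved" survives. Alternatively, one could avoid the \cdga{} bookkeeping altogether and rerun the microfibration argument of \cref{sec:microfibration} directly for the fiber bundle $\Z^r(\cL)/\fS_r \to U(\cL)$ with fiber the unordered configuration space $\F^r(\cV(f))/\fS_r$ — this is the "same proof" alluded to above — but the transfer route is shorter.
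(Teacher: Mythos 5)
Your proposal is correct and matches the paper's argument: the paper also deduces the corollary from the transfer isomorphism $H^*(\Z^r(\cL)/\fS_r;\QQ)\isom H^*(\Z^r(\cL);\QQ)^{\fS_r}$ combined with the $\fS_r$-equivariant isomorphism of \cref{maintheorem:stability}, with taking invariants commuting with cohomology by semisimplicity of $\QQ[\fS_r]$. The alternative you mention (rerunning the microfibration argument for $\Z^r(\cL)/\fS_r \to U(\cL)$) is likewise the paper's stated alternative, so there is nothing to add.
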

Similarly, taking the quotient by other subgroups of $\fS_r$ produces analogous results for various ``colored'' configuration spaces, ie bundles over $U(\cL)$ with fiber intermediate covers of $\F^r(\cV(f))/\fS_r$.

\subsection{Weightwise Euler characteristic}
\label{hodge-euler-characteristic}

Since $\Z^r(\cL)$ and $U(\cL)$ are quasiprojective varieties, their rational cohomology rings are canonically equipped with mixed Hodge structures.
The mixed Hodge structure on the stable cohomology of $H^*(U(\cL); \QQ)$ is compatible with its description as $\Symgr(H^*(X)[-1](-1))$, ie in the stable range the vector space isomorphism 
\[\Symgr(H^*(X)[-1](-1)) \lra H^*(U(\cL); \QQ)\]
is an isomorphism of mixed Hodge structures, see \cite[Proposition~8.6]{aumonier_h-principle_2022}.
Here the $(-1)$ denotes a Tate twist, in particular if $b_j \in H^*(X)$ has weight $w$ then $sb_j$ in the notation of \cref{sec:recollections-sectionspace} has weight $w + 2$. 
Similarly, the Kriz--Totaro \cdga{} $C_r(X)$ for $\F^r(X)$ (see \cref{construction:totaro-model}), due to its identification with the Leray spectral sequence of the algebraic inclusion $\F^r(X) \subto X^r$, has a mixed Hodge structure which is compatible with that on $H^*(\F^r(X))$, \emph{after passing to the associated graded for the weight filtration}\footnote{In general only after passing to the associated graded, see \cite[Section~3]{looijenga_torelli_2020}.}.

In \cref{rational-model-stabilizes,stable-cdga} we explain that the \cdga{} $A_r(X, c)$ in \cref{maintheorem:stability} has the form
\begin{equation}\label{stable-cdga-tensor-factorization}
A_r(X, c) \defeq C_r(X) \tensor \Symgr(H^*(X)[-1](-1)) \tensor \Symgr(\QQ \gen*{\alpha_1, \dots, \alpha_r, \eta_1, \dots, \eta_r}) \dpunct,
\end{equation}
with $\alpha_i$ of degree $2n - 1$ corresponding to the fundamental class of the punctured cotangent space $T^*_{x_i}X - 0$ at the marked points $x_i \in X$, and $\eta_i$ of degree $2n$ corresponding to the cohomology generator of the loop space $\Omega(T^*_{x_i}X \dsum \CC - 0)$.
Let $K_0(\text{MHS})$ be the Grothendieck ring of mixed Hodge structures, write $K_0(\MHSgr)$ for its (cohomology) graded version, and denote by $\LL = [H^2(\PP^2)] \in K_0(\MHSgr)$, which is of pure weight $2$ and lies in (cohomological) degree $2$.
Based on their geometric meaning, the natural Hodge structure to ascribe to the generators $\alpha_i$ and $\eta_i$ are: $\QQ\gen*{\alpha_i} \isom \LL^n[1]$ and $\QQ\gen*{\eta_i} \isom \LL^{n+1}[2]$.
With this choice the differentials in \cref{stable-cdga} preserve mixed Hodge structures so we can construct $A_r(X, c)$ in the category of mixed Hodge structures.

\begin{conjecture} \label{mhs-conjecture}
With the mixed Hodge structure defined above, the isomorphisms in \cref{maintheorem:stability} are isomorphisms of mixed Hodge structure after passing to the associated graded for the weight filtration.
\end{conjecture}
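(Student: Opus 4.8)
The structural obstruction to be overcome is that the isomorphism of \cref{maintheorem:stability} is built through the space $\Z^r_{\cC^0}(\cL^{\tensor d})$, which is purely topological, so $j^1$ carries no weight information by itself. The plan is therefore to recompute $H^*(\Z^r(\cL^{\tensor d}))$ in the stable range by manifestly algebraic means, read off the induced weight filtration, and compare it with the prescription in \cref{stable-cdga-tensor-factorization}. The natural device is the Leray spectral sequence of the algebraic fibre bundle $\pi\colon\Z^r(\cL)\to U(\cL)$, whose fibre over $f$ is $\F^r(\cV(f))$: this is a smooth (for $r\ge 2$ not proper) morphism of quasiprojective varieties, so by Deligne's theory
\[
  E_2^{p,q}=H^p\big(U(\cL);R^q\pi_*\QQ\big)\implies H^{p+q}(\Z^r(\cL))
\]
is a spectral sequence of mixed Hodge structures. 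It then suffices to (i) identify the $E_2$ page as a mixed Hodge structure in the stable range, (ii) control the differentials and the weight extensions well enough to pin down $\mathrm{gr}^W$ of the abutment, and (iii) match the outcome with the mixed Hodge structure on $A_r(X,c)$.

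For (i) one feeds $R^*\pi_*\QQ$ into the fibrewise Kriz--Totaro spectral sequence of the algebraic inclusion $\F^r(\cV(f))\subto\cV(f)^r$; its associated graded is a sum of tensor products of the local systems $R^*(\cV(f)^r)$ with the constant weight-$2$ classes cutting out the diagonals, which is exactly the combinatorics of the \cdga{} $C_r(X)$ (and $C_r(X)$ only ever models $\mathrm{gr}^W H^*(\F^r(X))$ in the first place --- this is the source of the ``associated graded'' in the statement, and it cannot be removed). By the Lefschetz hyperplane theorem $R^*\cV(f)$ splits, away from the middle degree, off the constant sheaves $H^*(X)$, while in the middle degree the new summand is the vanishing-cohomology variation $\cH^{n-1}_{\mathrm{van}}$, pure of weight $n-1$. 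So (i) reduces to computing, as a mixed Hodge structure in the stable range, the twisted groups $H^p\big(U(\cL^{\tensor d});\mathbf{S}_\lambda\,\cH^{n-1}_{\mathrm{van}}\tensor\QQ(j)\big)$ for Schur functors $\mathbf{S}_\lambda$ (the Schur refinement being what also carries the $\fS_r$-action). For the untwisted summands this is just $H^*(U(\cL^{\tensor d}))$ with a Tate twist, and \cite[Proposition~8.6]{aumonier_h-principle_2022} supplies the answer $\Symgr(H^*(X)[-1](-1))$, which is the second tensor factor of \cref{stable-cdga-tensor-factorization}.

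The third tensor factor --- the generators $\alpha_i$ and $\eta_i$ --- is where the remaining weights are read off. The class $\alpha_i$ should be identified, via the derivative map $\Z^r(\cL)\to(\Omega^1_X\tensor\cL)^r|_{\F^r(X)}$ used in the proof of \cref{maintheorem:comparison}, with the top cohomology of the punctured fibre of $\Omega^1_X\tensor\cL$ at $x_i$ (isomorphic to $\AA^n-0$), namely $\QQ(-n)$ of weight $2n$, matching $\LL^n[1]$; this part is genuinely algebraic. The class $\eta_i$ is the delicate one: topologically it is the degree-$2n$ generator of $H^*\big(\Omega(T^*_{x_i}X\dsum\CC-0)\big)=H^*(\Omega S^{2n+1})$, and the only weight consistent with it transgressing onto the weight-$2(n+1)$ generator of $H^{2n+1}(\AA^{n+1}-0)$ is $2(n+1)$, i.e. $\LL^{n+1}$ --- but that transgression is not available algebraically, so one must instead either realise $\eta_i$ directly as a secondary algebraic operation on $\Z^r(\cL)$, or deduce the weight from strictness of morphisms of mixed Hodge structures applied to the differential of \cref{stable-cdga}. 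The $r=1$ case should already be accessible along these lines: there $Z(\cL)\to U(\cL)$ is smooth projective, so the Leray sequence degenerates as a sequence of mixed Hodge structures and $\cH^{n-1}_{\mathrm{van}}$ is pure, and the only remaining content is to pin down $H^p(U(\cL^{\tensor d});\cH^{n-1}_{\mathrm{van}})$ as a mixed Hodge structure, which is tightly constrained by the algebra isomorphism of \cref{one-point-stability} together with the (now genuinely algebraic) mixed Hodge structure on $\Omega^1(\cL)-0$.

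\textbf{Main obstacle.} The weighted computation of $H^*\big(U(\cL^{\tensor d});(\cH^{n-1}_{\mathrm{van}})^{\tensor k}\big)$ in the stable range. The standard route to $H^*(U(\cL))$ goes through Alexander duality with the discriminant $\Sigma\subset\Gammahol(\cL)$ and a Vassiliev-type resolution, but nonconstant local systems on $U(\cL)$ do not extend over the contractible ambient space $\Gammahol(\cL)$ --- exactly the obstruction that forced the topological proof to pivot to $\F^r(X)$, and hence through the non-algebraic $\Z^r_{\cC^0}(\cL)$. Circumventing it seems to require either a new algebraic resolution of $\Sigma$ carrying the twisted coefficients (so that its weight spectral sequence is again one of mixed Hodge structures), or an upgrade of Howe's motivic stabilization \cite{howe_motivic_2019} from the identity of weightwise Euler characteristics (\cref{hodge-euler-characteristic}) to an identity of individual cohomology groups, which would itself require proving that the relevant spectral sequences degenerate.
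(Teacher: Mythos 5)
The statement you are addressing is left open in the paper: it is stated as \cref{mhs-conjecture} and the authors explicitly say that a proof (which they expect to require redoing the computation of \cref{rational-model-proof} in a category of \cdga{}s with mixed Hodge structures) is beyond the scope of the project. So there is no proof in the paper to match yours against, and your text is likewise not a proof: you name your ``Main obstacle'' --- the weighted computation of $H^*\big(U(\cL^{\tensor d});(\cH^{n-1}_{\mathrm{van}})^{\tensor k}\big)$ in the stable range --- and do not resolve it. That is a genuine gap, and it is essentially the same obstruction the authors identify: twisted coefficients on $U(\cL)$ cannot be pushed through the Alexander-duality/Vassiliev machinery, which is why the whole paper pivots away from the fibration $\Z^r(\cL)\to U(\cL)$ in the first place. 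As written, your argument establishes at most a plan whose hardest step is the one you cannot carry out, so it does not prove the conjecture.

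It is worth noting how the paper extracts partial weight information without meeting your obstacle. Instead of the projection to $U(\cL)$, the proof of \cref{weights-on-stable-cohomology} uses the Leray spectral sequence of the \emph{algebraic} derivative map $\pi\colon\Z^r(\cL)\to(\Omega^1(\cL)-0)^r|_{\F^r(X)}$, the same map used for the h-principle. Via the microfibration comparison (\cref{adapted-microcomparison}) the sheaves $R^q\pi_*\QQ$ are shown to be locally constant in the stable range and, by comparison with the continuous fibration $\pi_{\cC^0}$, to have trivial monodromy; the fibre cohomology and the $E_2$-differentials are then read off from $A_r(X,c)$ via Grivel--Halperin--Thomas. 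Because this spectral sequence is of mixed Hodge structures and never involves the vanishing-cohomology local systems over $U(\cL)$, one can pin down the weights of the stable classes (in particular forcing $w(\eta_i)=2n+2$, the point you flag as ``delicate''), but only weightwise on cohomology --- it does not upgrade the isomorphism of \cref{maintheorem:stability} to one of $\mathrm{gr}^W$-mixed Hodge structures, which is why the conjecture remains open. If you want a tractable target, proving the analogue of your strategy for this fibration (or lifting \cref{rational-model-proof} to mixed Hodge \cdga{}s) is the route the authors themselves indicate; your route through $\Z^r(\cL)\to U(\cL)$ requires new input on twisted stable cohomology of $U(\cL)$ that neither you nor the paper provides.
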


Ostensibly, a proof of \cref{mhs-conjecture} should involve carrying out the computation in \cref{rational-model-proof} in an appropriate category of \cdga{}s equipped with mixed Hodge structures.
However, such a lift of the computation is beyond the scope of this project.
Instead, we show that if we assign weights to the generators of $A_r$ as predicted by \cref{mhs-conjecture} then we obtain the correct weights on $H^*(\Z^r(\cL))$.

\begin{theorem} \label{weights-on-stable-cohomology}
Consider the \cdga{} $A_r(X, c)$ from \cref{stable-cdga} as a quotient of
\[H^*(X)^{\tensor r}\tensor \Symgr(H^*(X)[-1] \dsum \QQ\gen*{G_{ab}, \alpha_i, \eta_i}) \dpunct,\]
where $1 \le i, a, b \le r$ and $a \ne b$.
In addition to the grading by degrees $|G_{ab}| = |\alpha_i| = 2n - 1$ and $|\eta_i| = 2n$, put an additional  ``weight grading'' $w$ on the generators so that 
\[w(G_{ab}) = w(\alpha_i) = 2n \dpunct, \qquad w(\eta_i) = 2n + 2\]
and for $b \in H^i(X)$, 
\[w(b) = i \dpunct, \qquad w(sb) = i + 2 \dpunct,\]
where $sb \in H^*(X)[-1]$ is the corresponding generator. 
Extend multiplicatively to make $A_r(X, c)$ a bigraded algebra.
Then its differential preserves the additional grading (ie $w(da) = w(a)$ for ``weight-homogeneous'' $a$), and the induced grading on $H^*(A_r(X, c))$ agrees with the canonical weight grading of $H^*(Z^r(\cL))$ under the isomorphisms in \cref{maintheorem:stability}.
\end{theorem}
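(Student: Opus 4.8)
The theorem has two parts: that the differential of $A_r(X,c)$ is homogeneous for the auxiliary grading $w$, and that the resulting grading on $H^*(A_r(X,c))$ matches the canonical weight grading under \cref{maintheorem:stability}. The first part is a direct inspection of the explicit differential of \cref{stable-cdga}: its summands on the various generators are visibly weight-homogeneous of the stated weights. On the Kriz--Totaro generators, $dG_{ab}$ is the class of the $(a,b)$-th partial diagonal, which is $w$-homogeneous of weight $2n$ once the evaluation pullback of each $H^j(X)$ is assigned weight $j$, matching $w(G_{ab})=2n$. On the marked-point generators, $d\alpha_i$ involves (a multiple of) the $i$-th evaluation pullback of $c_n(\Omega^1_X\tensor\cL)\in H^{2n}(X)$, of weight $2n=w(\alpha_i)$, while $d\eta_i$ involves $s[X]$ and $c_1(\cL)\cdot\alpha_i$, both of weight $2n+2=w(\eta_i)$ (for $s[X]$ because $[X]\in H^{2n}(X)$, so $w(s[X])=2n+2$). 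On the factor $\Symgr(H^*(X)[-1](-1))$ the differential vanishes. Extending $w$ multiplicatively, $d$ is weight-homogeneous, so $H^*(A_r(X,c))$ inherits a splitting $\bigoplus_{k,w}H^k(A_r)_w$.

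For the second part the plan is to rerun the proof of \cref{maintheorem:comparison} and the computation of \cref{rational-model-stabilizes,stable-cdga} while carrying along mixed Hodge structures on the algebraic inputs. The key point is that, although the continuous model $\Z^r_{\cC^0}(\cL)$ is not algebraic, the comparison is organised — through the microfibration reduction of \cref{sec:microfibration} and the fiberwise $h$-principle of \cref{h-principle-prescribed-derivatives} — along the \emph{morphism of varieties} $\ev\from\Z^r(\cL)\to E$, $(f,x_1,\dots,x_r)\mapsto(\diff f(x_1),\dots,\diff f(x_r))$, where $E$ is the complement of the zero sections in $(\Omega^1_X\tensor\cL)^r|_{\F^r(X)}$ and the fibers of $\ev$ are intersections of $U(\cL)$ with affine-linear subspaces of $\Gammahol(\cL)$, i.e. affine complements of the discriminant. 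All of these are quasiprojective, so the Leray spectral sequence of $\ev$, and the Gysin sequence of the sphere bundle $S(\Omega^1_X\tensor\cL)^r|_{\F^r(X)}\to\F^r(X)$, are spectral sequences of mixed Hodge structures. I would use this to read off the weights on the building blocks: the Kriz--Totaro classes carry the weights of $H^*(\F^r(X))$ recalled before \cref{mhs-conjecture} (evaluation pullbacks of $H^*(X)$ with their own weights, $G_{ab}$ pure Tate of weight $2n$); the $\alpha_i$ are the fiberwise orientation classes spanning $H^{2n-1}(\CC^n-0)=\QQ(-n)$, pure Tate of weight $2n$; and the cohomology of the fibers of $\ev$, which parametrise sections of $\cL$ with a prescribed $1$-jet at the $x_i$, has its weights computed by the methods of \cite{aumonier_h-principle_2022,das_cohomological_2022}, contributing the generators $sb$ of weight $|b|+2$ and the $\eta_i$, which are pure Tate of weight $2n+2$ because the fundamental class of the fibre $S(J^1(\cL))\simeq S^{2n+1}$ spans $H^{2n+1}(\CC^{n+1}-0)=\QQ(-n-1)$ and loop-space transgression preserves this weight. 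Feeding the already-established degeneration and multiplicativity of these spectral sequences into the identification of \cref{stable-cdga} — now read in the category of mixed Hodge structures — then gives $\mathrm{gr}^W_w H^k(\Z^r(\cL))\cong H^k(A_r(X,c))_w$.

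The main obstacle is the compatibility of the homology $h$-principle of \cref{h-principle-prescribed-derivatives} and the ensuing spectral sequence manipulations with the weight filtrations of the algebraic fibers of $\ev$: one must check that the Alexander-duality isomorphism against the discriminant underlying those arguments upgrades to an isomorphism of mixed Hodge structures, at least after passing to $\mathrm{gr}^W$. This is the same kind of bookkeeping that a proof of \cref{mhs-conjecture} would demand, but here it is needed only for the honest quasiprojective varieties $E$ and the fibers of $\ev$ — not for the non-algebraic total space of the continuous model — and only the weight grading, not the Hodge filtration, has to be tracked; given weight-homogeneity of $d$ from the first paragraph, that grading is pinned down on generators by the three morphisms of varieties $\Z^r(\cL)\to\F^r(X)$, $\Z^r(\cL)\to U(\cL)$ and $\ev$, together with the strictness of morphisms of mixed Hodge structures. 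As a final consistency check, $\sum_k(-1)^k\dim_\QQ H^k(A_r(X,c))_w$ should reproduce the weightwise Euler characteristic computed motivically by Howe in \cite{howe_motivic_2019} and recorded in \cref{hodge-euler-characteristic}.
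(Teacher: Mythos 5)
Your first paragraph (weight-homogeneity of the differential) is fine, and your starting point for the second part — that the map $\ev\from\Z^r(\cL)\to(\Omega^1(\cL)-0)^r|_{\F^r(X)}$ recording the derivatives is a morphism of quasiprojective varieties, so its Leray spectral sequence carries mixed Hodge structures and computes the correct weights — is exactly the paper's route. But the two steps you then lean on are where the argument breaks. First, your determination of $w(\eta_i)=2n+2$ appeals to the continuous model: the sphere $S(J^1(\cL))\simeq S^{2n+1}$ and ``loop-space transgression preserves this weight.'' Those spaces are not algebraic and carry no mixed Hodge structure, and the fiber of the algebraic $\ev$ is $U(\vec v)$, an open subset of an affine space, not a loop or section space; so there is nothing to transgress in the category of MHS, and this step does not establish the weight of the $\eta$-classes. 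The paper gets this weight indirectly: after using the microfibration comparison only to see that $R^q\ev_*\QQ$ is locally constant with trivial monodromy (a point you skip) and that the $E_2$-page agrees with the Serre spectral sequence of $\pi_{\cC^0}$ computed via Grivel--Halperin--Thomas, it observes that $d_2\eta_i'=\sum_{|b_j|=2}(\pi_i^*-\pi_r^*)[b_j^\vee]\tensor sb_j$ must be a morphism of MHS; since $[sb_j]$ has weight $|b_j|+2$ (it pulls back from $H^{|b_j|+1}(U(\cL))$, whose stable weights are known from the earlier work), weight-preservation of $d_2$ \emph{forces} $w(\eta_i')=2n+2$. No geometric interpretation of $\eta_i$ is ever needed.

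Second, your third paragraph concedes that the ``main obstacle'' is upgrading the Alexander-duality/h-principle arguments to isomorphisms of (associated graded) mixed Hodge structures, and you describe this as the same bookkeeping a proof of \cref{mhs-conjecture} would require. That is precisely the step the paper deliberately avoids — it states that such a lift is beyond the scope of the project — and its proof is arranged so that the h-principle is used only additively, to identify the Leray sheaves and the $E_2$-page, with every weight statement extracted from honestly algebraic maps ($\ev$, the projection to $U(\cL)$) and the strictness of MHS morphisms in the Leray spectral sequence. As written, your proposal therefore rests on a deferred claim that is essentially as hard as the conjecture itself; to repair it you should replace the ``read off the weights of the fiber classes'' step by the forcing argument above (or by an independent MHS computation of $H^*(U(\vec v))$, which would be substantial new work), rather than by any appeal to the continuous model.
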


We defer the proof of this theorem to \cref{cdga-weights-proof}. 
But using it we can compute the weightwise Euler characteristic of $A_r(X, c)$.
Since classes of a given weight can only appear in finitely many degrees, by inspection of the weights ascribed to the generators above in this special case or by Deligne's proof of the Weil conjectures in general, this also computes the weightwise Euler characteristic of $\Z^r(\cL)$ for $\cL$ sufficiently ample.

To be precise, for $w \ge 0$ let
\[\chi(r, k) := \chi \left[\Gr^W_k H^*(A_r (X, c))\right] = \sum_{i \ge 0} (-1)^i \dim \Gr^W_k H^i (A_r(X, c)) \dpunct,\]
and note that this sum is finite, ie $\Gr^W_k H^i$ is non-zero for only finitely many $i$ given $k$. 
As is usual for Euler characteristic, it also does not depend on differentials, and since $A_r(X, c)$ is degreewise finite dimensional we can compute it as 
\[\chi(k) = \sum_{i \ge 0} (-1)^i \dim \Gr^W_k (A_r(X, c))^{\deg = i}\]
It will be convenient to directly compute the generating function $P_r(w) = \sum_k \chi(r, k) w^k$.
Set $P_{\F^r}$ and $P_U$ to be the similarly defined generating functions for the weightwise Euler characteristics of $\F^r(X)$ and $\Symgr H^*(X)[-1](-1)$ respectively.

Then from \cref{stable-cdga-tensor-factorization} we get
\[P_r(w) = P_{\F^r}(w) P_U(w) (1 - w^n)^r (1 + w^{2n + 2} + w^{2(2n + 2)} + \dotsb)^r = P_{\F^r}(w) P_U(w) \left(\frac{1 - w^{2n}}{1 - w^{2n + 2}}\right)^r\]
since each $\alpha_i$ is in odd degree and of weight $2n$ and each $\eta_i^j$ is in even degree and of weight $j(2n + 2)$.
Since $X$ is projective, $H^i(X)$ lies purely in weight $i$, so we can also write down a formula for $P_U$ in terms of the Betti numbers $\beta_i$ of $X$:
\[P_U(w) = \prod_i (1 - w^{i + 1})^{(-1)^i \beta_i} \dpunct.\]

If \cref{mhs-conjecture} holds then performing the same computation in $K_0(\MHSgr)\llbracket w \rrbracket$ we get
\[P_r(w) = P_{\F^r}(w) P_U(w) \left(\frac{1 - (\LL w^2)^n}{1 - (\LL w^2)^{n + 1}}\right)^r \dpunct.\]
Now the variable $w$ is in a sense redundant, since the coefficient of $w^k$ is exactly the weight $k$ part of the coefficients, except to allow ``formal'' infinite sums. 
In our case we can omit $w$ if we instead take the completion with respect to $\LL$ and this produces exactly the formula from \cite[Example~1.3.5]{howe_motivic_2019}.
In this sense \cref{mhs-conjecture} would be a cohomological lift of those results of \cite{howe_motivic_2019} taking values in $K_0(\text{MHS})$ and related rings.
To be precise, assuming \cref{mhs-conjecture} and some mild application of the representation theory of $\fS_d$, we can derive the following theorem, where $\chi_\HS$ is the above lift of $\chi$ to $K_0(\text{MHS})$.
\begin{theorem}[{\cite[Theorem~A]{howe_motivic_2019}}] \label{howe-stabilization}
If $\cL$ is very ample and $V$ is a finite-dimensional $\fS_r$ representation, then $\chi_\HS(F_r(\cL^{\tensor d}); V)$ stabilizes weightwise to $\chi_\HS(A_r(X, c) \tensor_{\fS_r} V)$ as $d \to \infty$.
\end{theorem}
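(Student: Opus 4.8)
The plan is to promote the vector-space isomorphism of \cref{maintheorem:stability} to an isomorphism of $\fS_r$-equivariant mixed Hodge structures using \cref{mhs-conjecture}, and then read off the conclusion one weight at a time. The work is essentially bookkeeping with weights, and it rests on two elementary observations: the spaces $\Z^r(\cL^{\tensor d})$ are smooth, so $W_{i-1}H^i(\Z^r(\cL^{\tensor d});\QQ)=0$; and the \cdga{} $A_r(X,c)$ carries, by \cref{weights-on-stable-cohomology}, a weight grading $w$ with $\deg \le w \le 2\deg$ on every generator of positive degree (check this case by case on the $b$, $sb$, $G_{ab}$, $\alpha_i$, $\eta_i$), hence $\deg \le w \le 2\deg$ on all monomials, since both sides of the inequality are additive.

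First I would fix a weight $k$. The two displayed estimates show that $\Gr^W_k H^i(\Z^r(\cL^{\tensor d});\QQ)$ and $\Gr^W_k H^i(A_r(X,c))$ both vanish unless $\lceil k/2\rceil \le i \le k$, so only finitely many cohomological degrees contribute to the weight-$k$ part of either Hodge--Euler characteristic. I would then pick $d$ large enough that \cref{maintheorem:stability} applies in all degrees $i \le k$ at once, which is possible since the bound in its footnote is linear in $i$. Granting \cref{mhs-conjecture}, each isomorphism $H^i(\Z^r(\cL^{\tensor d});\QQ) \isom H^i(A_r(X,c))$ then becomes an isomorphism of $\fS_r$-equivariant mixed Hodge structures after passing to $\Gr^W$; in particular $\Gr^W_k H^i(\Z^r(\cL^{\tensor d});\QQ) \isom \Gr^W_k H^i(A_r(X,c))$ as $\fS_r$-representations in pure weight-$k$ Hodge structures for every $i$, with both sides vanishing for $i>k$ (the left by smoothness, the right by the weight estimate).

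Next, for a finite-dimensional $\fS_r$-representation $V$, I would note that $\chi_\HS(F_r(\cL^{\tensor d}); V)$ — with $F_r(\cL^{\tensor d})$ the variety $\Z^r(\cL^{\tensor d})$ carrying its $\fS_r$-action, and in whatever normalization Howe adopts — is built functorially from $H^*(\Z^r(\cL^{\tensor d});\QQ)$ as a graded $\fS_r$-mixed Hodge structure via the exact functor $(-)\tensor_{\fS_r}V$, which commutes with $\Gr^W$. Its weight-$k$ component is therefore $\sum_i(-1)^i[\Gr^W_k H^i(\Z^r(\cL^{\tensor d});\QQ)\tensor_{\fS_r}V]$, which by the previous step equals $\sum_i(-1)^i[\Gr^W_k H^i(A_r(X,c))\tensor_{\fS_r}V]$, i.e.\ the weight-$k$ component of $\chi_\HS(A_r(X,c)\tensor_{\fS_r}V)$. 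Since $k$ and $V$ were arbitrary and each fixed weight is controlled by a single sufficiently large $d$, this is exactly the asserted weightwise stabilization. The remaining loose end — reconciling this with Howe's own presentation of $\chi_\HS(F_r(\cL^{\tensor d});-)$ through unordered and colored configuration spaces (compare the discussion of $\Z^r(\cL)/\fS_r$ above) — is routine symmetric-group representation theory: write $[V]$ as a virtual combination of classes of permutation representations and extend linearly.

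The step I expect to be the genuine obstacle is the one the argument assumes outright, \cref{mhs-conjecture}. Without it, \cref{weights-on-stable-cohomology} still determines the weight grading, hence the integers $\dim\Gr^W_k H^i$, but not the underlying Hodge structures, so unconditionally one obtains only stabilization of the $\Gr^W$-dimensions — equivalently, of the generating function $P_r(w)$ of \cref{hodge-euler-characteristic} — which is strictly weaker than the $K_0(\text{MHS})$-valued statement whenever $H^*(X)$ has nontrivial (say odd-degree) Hodge structure. Thus the real content sits in \cref{maintheorem:stability} and \cref{mhs-conjecture}; the Euler-characteristic passage itself is elementary.
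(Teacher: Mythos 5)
Your proposal is, in substance, exactly the derivation the paper has in mind: \cref{howe-stabilization} is not proved in the paper by its own methods at all --- it is quoted as Howe's Theorem~A, with only the remark that it ``can be derived'' from \cref{maintheorem:stability} \emph{assuming} \cref{mhs-conjecture} together with some mild $\fS_r$-representation theory, and your write-up is a correct fleshing-out of that remark (the weight bookkeeping via $i \le w \le 2i$ on the smooth varieties $\Z^r(\cL^{\tensor d})$ and $\deg \le w \le 2\deg$ on the generators of $A_r(X,c)$, the choice of $d$ per fixed weight using the linear-in-$i$ bound, and the exactness of $(-)\tensor_{\fS_r}V$ over $\QQ[\fS_r]$ are all fine). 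You are also right, and commendably explicit, about where the content sits: without \cref{mhs-conjecture}, \cref{weights-on-stable-cohomology} pins down only the weight-graded dimensions, i.e.\ the series $P_r(w)$ of \cref{hodge-euler-characteristic}, not classes in $K_0(\text{MHS})$, so the unconditional statement rests on Howe's own proof, which is a genuinely different argument (motivic statistics in a completed Grothendieck ring rather than cohomology in a stable range). The one loose end you wave at --- ``whatever normalization Howe adopts'' --- is the same one the paper leaves loose: Howe's invariant is built from compactly supported Hodge structures of normalized classes (completion at $\LL$) and lives on the $\CC^\times$-quotient side (cf.\ \cref{scalar-multiple,scalar-multiple-leray-hirsch}), so translating your ordinary-cohomology computation into his setting uses Poincaré duality for the smooth $\Z^r(\cL^{\tensor d})$ and the Leray--Hirsch splitting for the $\CC^\times$-action; this is routine, but since it is precisely where the ``weightwise'' indexing gets dualized and shifted, it deserves a sentence rather than a parenthesis.
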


However any of the results in this paper, or their tentative analogs in étale cohomology, say nothing about the results of \cite{howe_motivic_2019} involving point counts over finite fields: although the Grothendieck--Lefschetz trace formula does say that such point counts share many properties with Euler characteristic, it a priori involves cohomology of all degrees simultaneously so the cohomological input is never contained in our range of stability.

\subsection{Outline}
In \cref{sec:setup} we define our main objects of interest and recall some elementary facts about sections of line bundles and their jets. 
\Cref{sec:comparison,sec:h-principle} are devoted to the proof of \cref{maintheorem:comparison}, with the ``high-level'' construction involving microfibrations in \cref{sec:comparison} and the necessary fiberwise homology isomorphism established in \cref{sec:h-principle}.
In \cref{sec:rational-model} we compute the rational cohomology of $Z^r(\cL)$ via a \cdga{} model, thereby deducing \cref{maintheorem:stability} and the remaining corollaries.
\Cref{sec:computations} in particular lists the computation of $H^*(A_r(X, c_1(\cL)))$ for some specific examples, such as $X = \PP^1, \PP^2, \PP^1 \times \PP^1$.

\subsection*{Acknowledgements}

We are grateful to Oscar Randal-Williams for pointing out the relevance of microfibrations, which are a crucial ingredient in our arguments.
We thank Søren Galatius, Florian Naef and Dan Petersen for helpful conversations throughout various stages of the project.
Thanks to Nils Prigge for pointing us towards and explaining the Grivel--Halperin--Thomas construction of the Serre spectral sequence.
AA would also like to thank Nathalie Wahl, Michael Weiss, and Kai Cieliebak, comprising his PhD thesis committee, for spotting a mistake in an earlier version.

AA was supported by the Danish National Research Foundation through the Copenhagen Centre for Geometry and Topology (DNRF151) as well as the European Research Council (ERC) under the European Union’s Horizon 2020 research and innovation programme (grant agreement No.~682922).
RD was supported by the European Research Council (ERC) under the European Union's Horizon 2020 research and innovation programme (grant agreement No.~772960), by the Danish National Research Foundation through the Copenhagen Centre for Geometry and Topology (DNRF151), as well as by Dan Petersen's Wallenberg Academy fellowship during various parts of the project.

\section{Configuration spaces and jet bundles}
\label{sec:setup}

This section is meant to be a self-contained recollection of relevant facts about configuration spaces and jet bundles followed by their specialization to our context. 
As in the introduction, fix $X$ to be a connected smooth complex projective variety of complex dimension~$n$. 
We write $\Gammahol(-)$ for the space of holomorphic global sections of a bundle on $X$.

\subsection{Configuration spaces}
For any topological space $S$, let $\F^r(S)$ be the configuration space of $r$ ordered points in $S$
\[
    \F^r(S) \defeq \set{(x_1, \dots, x_r) \in S^r}{x_i \ne x_j \text{ for } i \ne j} \dpunct.
\]
The symmetric group $\fS_r$ acts on $\F^r(S)$ by permuting coordinates. More generally, for $Z \to S$, define the fiberwise configuration space
\[
    \F^r_S(Z) \defeq \F^r(Z) \cap Z^{\times_S r},
\]
where $Z^{\times_S r}$ denotes the $r$-fold fiber product $Z \times_S Z \times_S \dotsm \times_S Z$.
Denoting the fiber of $Z$ over $s \in S$ by $Z_s$, we have a natural identification
\[\F^r_S(Z) = \set{(s, z_1, \dots, z_r)}{(z_1, \dots, z_r) \in \F^r(Z_s)} \subset S \times \F^r(Z) \subset S \times Z^r \dpunct.\]
In particular, the projection map $\F^r_S(Z) \to S$ has fiber $\F^r(Z_s)$ over $s \in S$.
For convenience, we identify $\F^1_S(Z) = Z$ and $\F^0_S(Z) = S$.

\subsection{Jet bundles}
\label{sec:jet-bundles}

In this work, we use jet bundles to suitably talk about derivatives of sections of vector bundles. For the unfamiliar reader, we offer a minimal overview of the general theory developed in \cite{grothendieck_elements_1967}.

\bigskip

Let $\cL$ be a holomorphic line bundle on $X$. Its bundle of first order jets, defined in \cite[Section~16.7]{grothendieck_elements_1967}, will be denoted by $J^1(\cL)$. It is a holomorphic vector bundle on $X$ of complex rank $\dim_\bC X + 1$ which fits in an exact sequence of holomorphic vector bundles
\begin{equation}\label{eqn:SES-jet-bundle}
    0 \lra \Omega^1(\cL) \lra J^1(\cL) \lra \cL \lra 0
\end{equation}
where $\Omega^1(\cL) = \Omega^1_X \otimes \cL$ is the cotangent bundle of $X$ tensored with $\cL$. Although this short exact sequence does not split in general (in the category of holomorphic vector bundles), it informally indicates that the jet bundle records the value (in $\cL$) and the first derivative (in $\Omega^1(\cL)$) of sections of $\cL$. More precisely:
\begin{definition}
The short exact sequence above splits after taking holomorphic global sections. Writing $\mathrm{d}f$ for the derivative of a global section $f$ of $\cL$, the morphism
\begin{align*}
	j^1 \colon \Gammahol(\cL) &\lra \Gammahol(J^1(\cL)) = \Gammahol(\cL) \oplus \Gammahol(\Omega^1(\cL)) \\
	f &\longmapsto (f, \diff f)
\end{align*}
is called the \emph{(first order) jet expansion}.
\end{definition}

\begin{example}\label{example:jet-projective-space}
Let us consider the case where $X = \bP^n$ and $\cL = \cO(k)$ with $k \geq 1$. The space of global sections $\Gammahol(\cO(k))$ can be identified with the complex vector space of homogeneous polynomials of degree $k$ in $n+1$ variables. Writing $z_0, \ldots, z_n$ for the variables, Euler's identity
\[
    \sum_i z_i \frac{\partial f}{\partial z_i} = k \cdot f
\]
shows that knowing the $n+1$ partial derivatives of a section $f$ in the homogeneous coordinates amounts to knowing the section. This fact can be leveraged to an isomorphism $J^1(\cO(k)) \isom \cO(k-1)^{\oplus n+1}$ of holomorphic vector bundles\footnote{For a proof, see \cite{di_rocco_line_1998}. Note however that such a decomposition is very peculiar to projective spaces.}. With these identifications, the jet expansion is given by
\[
    \Gammahol(\cO(k)) \ni \quad f \longmapsto \left(\frac{\partial f}{\partial z_0}, \ldots, \frac{\partial f}{\partial z_n} \right) \quad \in \Gammahol(\cO(k-1))^{\oplus n+1} \dpunct. \qedhere
\]
\end{example}

\begin{definition}\label{singular-section}
A global section $f \in \Gammahol(\cL)$ is said to be \emph{singular} at $x \in X$ if the first jet $j^1(s)(x) = 0$. It is called \emph{non-singular} if it is not singular at any $x \in X$.
\end{definition}

\begin{remark}
The vanishing locus of any non-zero global section $f \in \Gammahol(\cL)$ is the subvariety given by $\cV(f) \defeq \set{x \in X}{f(x) = 0} \subset X$. When $f$ is non-singular, $\cV(f)$ is a smooth subvariety by the implicit function theorem.
\end{remark}

\begin{example}
In the situation of \cref{example:jet-projective-space}, a global section $f$ is singular at a point $x \in \bP^n$ precisely when $\frac{\partial f}{\partial z_i}(x) = 0$ for each $i = 0, \ldots, n$. This is indeed the more classical Jacobian criterion.
\end{example}

The following property, jet ampleness, is the technical key to many arguments in this work:
\begin{definition}[Compare \cite{beltrametti_generation_1999}]\label{defn:jet-ample}
Let $\cL$ be a holomorphic line bundle on $X$. Let $k \geq 0$ be an integer. Let $x_1, \dotsc, x_t$ be $t$ distinct points in $X$ and $(k_1,\dotsc,k_t)$ be a $t$ tuple of positive integers such that $\sum_i k_i = k+1$. Write $\fm_i$ for the maximal ideal sheaf corresponding to $x_i$, and $\cL_{x_i}$ for the stalk of the sheaf $\cL$ at $x_i$. We say that $\cL$ is \emph{$k$-jet ample} if the evaluation map
\[
    \Gammahol\left(\cL\right) \lra \bigoplus_{i=1}^t \ \cL_{x_i} / \fm_i^{k_i} \cL_{x_i}
\]
is surjective for any $x_1,\dotsc,x_t$ and $k_1,\dotsc,k_t$ as above.
\end{definition}

Note that the image of $f \in \Gamma_\hol(L)$ in $\cL_{x_i}/\fm_i^{k_i}$ can be identified with $f(x_i)$ if $k_i = 1$ and with $j^1(f)(x_i) = (f(x_i), \diff f(x_i))$ if $k_i = 2$.
More generally and less formally, the definition insists that there is a section of $\cL$ with arbitrary prescribed Taylor approximations of total order $\le k + 1$ at any $t$ points of $X$. Although we are only concerned with first order approximations in this article and we could replace the condition with a weaker version where each $k_i \le 2$, we have chosen to state our results using the existing notion of jet ampleness for the sake of familiarity.

\subsection{Marked hypersurfaces and section spaces}
\label{sec:section-spaces}

We continue with the notation from the previous section. For a holomorphic bundle $E \to X$, recall that we denote the space of its holomorphic global sections by $\Gamma_\hol(E)$. We will write $\Gammacon(E)$ for the larger space of continuous global sections. Both section spaces are topologized as subspaces of the continuous mapping space $\map(X,E)$, the latter being endowed with the compact-open topology.

\bigskip

Let $U(\cL) \subset \Gammahol(\cL)$ be the subspace of non-singular sections. The incidence variety
\[
    Z(\cL) \defeq \set{(f, x)}{f(x) = 0} \subset U(\cL) \times X
\]
is equipped with projections to $U(\cL)$ and $X$; the fiber over $f \in U(\cL)$ is the smooth ``$\cL$-hypersurface'' $\cV(f) \subset X$.
\begin{definition}
For $r \geq 0$, the space of \emph{$\cL$-hypersurfaces with $r$ (ordered) marked points} is
\[
\Z^r(\cL) \defeq \F^r_{U(\cL)}(Z(\cL)) = \set{(f, x_1, \dotsc, x_r)}{f(x_i) = 0,\ x_i \ne x_j \text{ for } i \ne j} \subset U(\cL) \times X^r \dpunct. \qedhere
\]
\end{definition}

As before, we will identify $\Z^1(\cL) = Z(\cL)$ and $\Z^0(\cL) = U(\cL)$. 
The space $\Z^r(\cL)$ comes with two projection maps: to $U(\cL)$ and to $\F^r(X)$. The fiber of $\Z^r(\cL) \to U(\cL)$ over $f \in U(\cL)$ is the configuration space $\F^r(\cV(f))$. 

\begin{remark}\label{scalar-multiple}
As $\cV(f) = \cV(\lambda f)$ for any $\lambda \in \bC^\times$, it is perhaps more geometrically meaningful to consider quotients of $\Z^r(\cL)$ (including $U(\cL) = \Z^0(\cL)$) by $\bC^\times$. 
However, our constructions are easier without taking this quotient and in general the analogs of our results may not hold for these quotients.
However in the ``typical case'', eg when $\cL$ is sufficiently ample (\cite[Theorem~3.4]{gelfand_discriminants_1994}, see also \cite[Corollary~1.2]{gelfand_discriminants_1994} and its preceding discussion), the discriminant is given by a hypersurface and in that case the following lemma shows that this distinction is inessential when considering rational cohomology.
In particular the analog of \cref{maintheorem:stability} holds for $\Z^r(\cL)/\CC^\times$, but possibly with a worse stable range.
\end{remark}

\begin{lemma}\label{scalar-multiple-leray-hirsch}
For $r \ge 0$, let $\CC^\times$ act on $\Z^r(\cL) \subset U(\cL) \times \F^r(X)$ by scalar multiplication on $U(\cL)$ and trivially on $\F^r(X)$.
Then this action is free and when $U(\cL) \subset \Gammahol(\cL)$ is the complement of a hypersurface we have an isomorphism:
\[H^*(\Z^r(\cL); \QQ) \isom H^*(\Z^r(\cL)/\CC^\times \times \CC^\times; \QQ) \dpunct.\]
\end{lemma}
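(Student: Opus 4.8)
The statement has two parts: freeness of the $\CC^\times$-action, and the cohomology isomorphism. Freeness is immediate: if $\lambda f = f$ for a nonzero section $f$, then $\lambda = 1$; and every $f \in U(\cL)$ is nonzero since a singular section would vanish everywhere, so in particular $U(\cL)$ contains no zero section. (The trivial action on $\F^r(X)$ doesn't affect freeness.) Hence $\Z^r(\cL) \to \Z^r(\cL)/\CC^\times$ is a principal $\CC^\times$-bundle. For the cohomology statement, the plan is to show this bundle is \emph{cohomologically trivial} over $\QQ$, i.e.\ that it satisfies the hypotheses of the Leray--Hirsch theorem (as the lemma's label suggests): one needs a class in $H^1(\Z^r(\cL); \QQ)$ restricting to a generator of $H^1(\CC^\times; \QQ) \cong \QQ$ on each fiber. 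Equivalently, the Euler class (first Chern class) of the associated line bundle vanishes rationally, so the $S^1$-bundle underlying it is rationally trivial and $H^*(\Z^r(\cL)) \cong H^*(\Z^r(\cL)/\CC^\times) \tensor H^*(\CC^\times) \cong H^*(\Z^r(\cL)/\CC^\times \times \CC^\times)$.

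\textbf{Producing the needed class.} This is where the hypothesis ``$U(\cL)$ is the complement of a hypersurface'' enters. Write $\Gammahol(\cL) = \CC^N$ and suppose the discriminant (the complement of $U(\cL)$) is the zero locus of a single homogeneous polynomial $\Delta$ of some degree $m \ge 1$. Then $\log|\Delta|$ — or more precisely the closed $1$-form $\operatorname{Re}\bigl(\tfrac{1}{m}\,d\Delta/\Delta\bigr)$, equivalently the form $d^c \log|\Delta|^{1/m}$ — is defined on $U(\cL)$ and, pulled back to $\Z^r(\cL)$ via the projection to $U(\cL)$, restricts on each $\CC^\times$-orbit to a form whose integral over the orbit circle is a nonzero constant (because $\Delta$ is homogeneous of degree $m$, so $\Delta(\lambda f) = \lambda^m \Delta(f)$ and the orbit circle $\lambda \mapsto \lambda f$ picks up winding number $m$). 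After normalizing, this gives the desired class $\omega \in H^1(\Z^r(\cL); \QQ)$ restricting to a fiberwise generator. Concretely: the map $U(\cL) \to \CC^\times$, $f \mapsto \Delta(f)$, is $\CC^\times$-equivariant for the degree-$m$ scaling action on the target, hence the composite $\Z^r(\cL) \to U(\cL) \to \CC^\times$ pulls back the standard generator of $H^1(\CC^\times;\QQ)$ to a class that is $\tfrac{1}{m}$ times a fiberwise generator; multiplying by $m$ fixes the normalization.

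\textbf{Concluding.} With such an $\omega$ in hand, Leray--Hirsch applied to the fibration $\CC^\times \hookrightarrow \Z^r(\cL) \to \Z^r(\cL)/\CC^\times$ gives that $H^*(\Z^r(\cL); \QQ)$ is a free module over $H^*(\Z^r(\cL)/\CC^\times; \QQ)$ on the two classes $1, \omega$, which is exactly the Künneth statement
\[
H^*(\Z^r(\cL); \QQ) \isom H^*(\Z^r(\cL)/\CC^\times; \QQ) \tensor H^*(\CC^\times; \QQ) \isom H^*\bigl(\Z^r(\cL)/\CC^\times \times \CC^\times; \QQ\bigr).
\]
One should note the $\fS_r$-equivariance and multiplicativity is automatic here since $\omega$ is pulled back from $U(\cL)$ on which $\fS_r$ acts trivially. \textbf{The main obstacle} is the middle step: making precise that the hypersurface hypothesis yields an equivariant map to $\CC^\times$ of the correct winding degree — i.e.\ that a global defining equation $\Delta$ of the discriminant exists and is homogeneous. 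Since $\Gammahol(\cL) \cong \CC^N$ is affine space, any hypersurface is cut out by a single polynomial, and since the discriminant is $\CC^\times$-invariant that polynomial can be taken homogeneous (each graded piece of it must also vanish on the cone); this is the only place the hypothesis is used, and it is exactly what is needed.
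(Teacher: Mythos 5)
Your proposal is correct and follows essentially the same route as the paper: pull back the generator of $H^1(\CC^\times;\QQ)$ along the composite $\Z^r(\cL) \to U(\cL) \xrightarrow{\Delta} \CC^\times$, observe that on each orbit this has winding number $\deg\Delta \neq 0$ (using homogeneity of the defining polynomial $\Delta$), and conclude by Leray--Hirsch. Your extra remarks on freeness and on why $\Delta$ may be taken homogeneous just flesh out points the paper treats as immediate.
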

\begin{proof}
Suppose $U(\cL) \subset \Gammahol(\cL)$ is the complement of the hypersurface defined by a (necessarily homogeneous) polynomial $\Delta$.
Then for any $(f, \vec x) \in \Z^r(\cL)$, the composite
\[
    \CC^\times \tolabel{\lambda \mapsto \lambda \cdot (f, \vec x)} \Z^r(\cL) \to U(\cL) \tolabel{\Delta} \CC^\times
\]
is the map $\lambda \mapsto \Delta(\lambda f) = \lambda^{\deg \Delta} \Delta(f)$ of degree $\deg \Delta \neq 0$ and hence induces an isomorphism on rational cohomology.
The claim follows by applying the Leray--Hirsch theorem.
\end{proof}

The constructions above can be adapted and repeated in the setting of continuous sections. More precisely, we write
\[
    U_{\cC^0}(\cL) \defeq \Gamma_{\cC^0}(J^1(\cL) - 0) = \set{s \in \Gamma_{\cC^0}(J^1(\cL))}{s(x) \neq 0, \ \forall x \in X}.
\]
To define the analogue of the incidence variety $Z(\cL)$, we have to find a corresponding notion of vanishing at a point for sections of the jet bundle. Recall that the evaluation of a section $f \in \Gammacon(J^1(\cL))$ at a point $x \in X$ is an element
\[
f(x) = (f_1(x), f_2(x)) \in J^1(\cL)_x = \cL_x \oplus \Omega^1(\cL)_x
\]
where $(-)_x$ denotes the fiber at $x$. If $f = j^1(g)$, then $f_1(x) = g(x)$ is simply the value of $g$ at $x$. We take a cue from this situation and define
\[
    Z_{\cC^0}(\cL) \defeq \set{(s, x) \in U_{\cC^0}(\cL) \times X}{s(x) \in 0 \oplus \Omega^1(\cL)_x \subset \cL_x \oplus \Omega^1(\cL)_x } \dpunct.
\]
The following pullback square of topological spaces is then a direct consequence of the definitions:
\[\begin{tikzcd}
Z(\cL) \rar["j^1 \times \id"] \dar & Z_{\cC^0}(\cL) \dar\\
U(\cL) \rar["j^1"] & U_{\cC^0}(\cL)
\end{tikzcd}\]
More generally, denoting $\F^r_{U_{\cC^0}(\cL)}(Z_{\cC^0}(\cL))$ by $\Z^r_{\cC^0}(\cL)$, we get a pullback square:
\[\begin{tikzcd}
\Z^r(\cL) \rar["j^1 \times \id"] \dar & \Z^r_{\cC^0}(\cL) \dar\\
U(\cL) \rar["j^1"] & U_{\cC^0}(\cL)
\end{tikzcd}\]

\section{Comparison isomorphism}
\label{sec:comparison}

Having defined the spaces $\Z^r(\cL)$ and $\Z^r_{\cC^0}(\cL)$ above, we are now ready to state our main theorem about integral cohomology. Its proof will occupy the rest of this section.
\begin{theorem}\label{theorem:main-theorem}
Let $X$ be a connected smooth complex projective variety, $r \ge 1$ an integer, and $\cL$ a $d$-jet ample line bundle on $X$. Then the map 
\[
    j^1 \times \id \colon \Z^r(\cL) \lra \Z^r_{\cC^0}(\cL)
\]
induces an isomorphism in integral homology in the range of degrees $* < \frac{d-3}{2}-r$.
\end{theorem}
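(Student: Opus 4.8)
The plan is to realize both $\Z^r(\cL) \to \F^r(X)$ and $\Z^r_{\cC^0}(\cL) \to \F^r(X)$ as the "same" map up to a fiberwise homology equivalence, and then to leverage the fact that the algebraic-side map, while not a fibration, is a \emph{microfibration}, so that a homology equivalence on fibers propagates to a homology equivalence on total spaces. Concretely, first I would replace the target $\F^r(X)$ by the total space of the bundle $(\Omega^1_X \tensor \cL)^r|_{\F^r(X)}$ whose fiber over $(x_1,\dots,x_r)$ is $\bigoplus_i \big((\Omega^1_X\tensor\cL)_{x_i} - 0\big)$, recording not just the marked points but the (necessarily nonzero) derivatives $\diff f(x_i)$. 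Call this enlarged target $W^r$. Both spaces $\Z^r(\cL)$ and $\Z^r_{\cC^0}(\cL)$ map to $W^r$ — in the continuous case this is literally the evaluation $s \mapsto (s_2(x_1),\dots,s_2(x_r))$, and in the holomorphic case it is $(f,\vec x)\mapsto (\diff f(x_1),\dots,\diff f(x_r))$ — and both maps sit over $\F^r(X)$.

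Next I would check that the continuous map $\Z^r_{\cC^0}(\cL) \to W^r$ is a genuine fibration (it is a space of sections of a bundle over $X$ satisfying an open condition plus finitely many prescribed values, so Dold's theorem or a direct partition-of-unity argument applies), and that the holomorphic map $\Z^r(\cL) \to W^r$ is a microfibration (this uses that $\Z^r(\cL)$ is a subspace cut out by a closed condition inside a locally-trivial bundle over $W^r$ — the microfibration property for such "open locus inside a linear family" maps is exactly the input Randal-Williams's observation supplies, cf. the discussion after Definition of microfibration; I would invoke the general lemma from Section~\ref{sec:microfibration}). With these two facts, a homology isomorphism in a range on each fiber of these maps over $W^r$ upgrades — by the microfibration-to-fibration comparison (Mayer--Vietoris / hypercover patching, losing nothing or only a bounded amount in the range) — to a homology isomorphism $H_i(\Z^r(\cL)) \to H_i(\Z^r_{\cC^0}(\cL))$ in the asserted range $* < \tfrac{d-3}{2} - r$.

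The core analytic input is then the \emph{fiberwise} statement: fix distinct points $x_1,\dots,x_r \in X$ and fixed nonzero covectors $v_i \in (\Omega^1_X\tensor\cL)_{x_i}$, and compare
\[
\set{f \in U(\cL)}{f(x_i)=0,\ \diff f(x_i) = v_i}
\quad\text{with}\quad
\set{s \in \Gamma_{\cC^0}(J^1(\cL)-0)}{s(x_i)=(0,v_i)} .
\]
This is precisely the content of the fiberwise homology $h$-principle (Theorem~\ref{h-principle-prescribed-derivatives}): for $\cL$ that is $d$-jet ample one can prescribe the $1$-jet at $r$ points and still have enough freedom in the remaining Taylor data that the space of non-singular sections with these constraints is homology-equivalent, through degree roughly $\tfrac{d-3}{2}-r$, to the corresponding continuous section space. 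The $-r$ correction in the range is exactly the cost of having "used up" $r$ of the available jet-ampleness constraints, and the $d$-jet ampleness hypothesis of the theorem is what guarantees the constrained holomorphic section space is still non-empty and behaves well.

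\textbf{The main obstacle} is the fiberwise $h$-principle step. The high-level microfibration argument is formal once the two structural facts (fibration on the continuous side, microfibration on the algebraic side) are in place; but the fiberwise comparison requires the Alexander-duality / complement-in-$\Gammahol(\cL)$ machinery (à la Vassiliev, Tommasi, \cite{aumonier_h-principle_2022}) adapted to sections with \emph{prescribed} $1$-jets at $r$ points, and keeping careful track of how the jet-ampleness bound degrades as one imposes these constraints. That bookkeeping — producing the explicit range $* < \tfrac{d-3}{2}-r$ — is where the real work lies, and I would expect to spend the bulk of Section~\ref{sec:h-principle} on it.
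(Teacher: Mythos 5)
Your proposal is correct and follows essentially the same route as the paper: map both spaces to the bundle $(\Omega^1(\cL)-0)^r|_{\F^r(X)}$ of marked points with nonzero derivatives, note that the continuous side is a fibration and the holomorphic side is an open subset of an affine bundle over this base (this is where jet ampleness enters, making it a microfibration), apply the fiberwise-to-global homology comparison of \cref{adapted-microcomparison}, and feed in the fiberwise h-principle of \cref{h-principle-prescribed-derivatives}, exactly as in the paper's proof. The only slip is describing $\Z^r(\cL)$ as cut out by a ``closed condition'': nonsingularity is the \emph{open} condition that makes \cref{lemma:open-in-fibration-is-microfibration} applicable, as your own phrase ``open locus inside a linear family'' indicates a line later.
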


We now proceed towards the proof of the theorem, beginning with some notation. Given a fiber bundle $E \to X$, we denote by $E^r|_{\F^r(X)}$ the restriction to $\F^r(X)$ of the product bundle $E^r$ on $X^r$. We will mostly be concerned with
\[
    (\Omega^1(\cL) - 0)^r |_{\F^r(X)} \quad \text{and} \quad (J^1(\cL) - 0)^r|_{\F^r(X)}
\]
which are respectively $(\bC^n - 0)^r$ and $(\bC^{n+1}-0)^r$ bundles over $\F^r(X)$. We will also make use of the evaluation map
\begin{align*}
    \ev \colon U_{\cC^0}(\cL) \times \F^r(X) &\lra (J^1(\cL) - 0)^r|_{\F^r(X)} \\
    (f, x_1, \dotsc, x_r) &\longmapsto (f(x_1), \dotsc, f(x_r)).
\end{align*}
It follows directly from the definitions that we have two pullback squares:
\[\begin{tikzcd}
\Z^r(\cL) \rar \dar & U(\cL) \times \F^r(X) \dar["j^1 \times \mathrm{id}"] \\
\Z^r_{\cC^0}(\cL) \rar \dar & U_{\cC^0}(\cL) \times \F^r(X) \dar["\ev"] \\
(\Omega^1(\cL) - 0)^r |_{\F^r(X)} \rar[hook] & (J^1(\cL) - 0)^r|_{\F^r(X)}
\end{tikzcd}\]
Over a point $\vec{v} = \left( (x_1,v_1),\dotsc,(x_r,v_r) \right) \in (J^1(\cL) - 0)^r|_{\F^r(X)}$, denote the fiber of the composition $\ev \circ (j^1 \times \mathrm{id})$ by
\[
    U(\vec{v}) \defeq \set{f \in U(\cL)}{j^1(f)(x_i) = (x_i, v_i) \text{ for each } i=1, \dotsc, r}
\]
and the fiber of $\ev$ by
\[
    U_{\cC^0}(\vec{v}) := \set{s \in \Gamma_{\cC^0}(J^1(\cL) - 0)}{s(x_i) = (x_i, v_i) \text{ for each } i = 1, \dotsc, r} \dpunct.
\]
We summarize the situation in the following commutative diagram:
\begin{equation}\label{eqn:main-proof-diagram}
\begin{tikzcd}
U(\vec{v}) \arrow[d, hook] \arrow[rr]                                                            &                           & U_{\cC^0}(\vec{v}) \arrow[d, hook]  \\
\Z^r(\cL) \arrow[rr, "j^1 \times \mathrm{id}"] \arrow[rd, "\ev \circ (j^1 \times \mathrm{id})"'] &                           & \Z^r_{\cC^0}(\cL) \arrow[ld, "\ev"] \\
             & (J^1(\cL) - 0)^r|_{\F^r(X)} &                                    
\end{tikzcd}
\end{equation}
Our strategy to prove the main theorem rests on the diagram~\eqref{eqn:main-proof-diagram}. 
Suppose for a moment that $\ev$ and $\ev \circ (j^1 \times \mathrm{id})$ were Serre fibrations. 
Then \cref{theorem:main-theorem} would follow by a Serre spectral sequence argument once we knew that the map between the fibers induces a homology isomorphism in a range of degrees. 
We will indeed establish such an isomorphism in \cref{sec:h-principle}, specifically \cref{h-principle-prescribed-derivatives}. 
However, although we will see shortly that $\ev$ is a fiber bundle (\cref{continuous-evaluation-is-fiber-bundle}), the map $\ev \circ (j^1 \times \mathrm{id})$ is only a Serre microfibration (\cref{jet-expansion-is-micro-fibration}). 
We recall this notion below, and explain why it is sufficient to carry out the outlined strategy.

\subsection{A micro review of microfibrations} \label{sec:microfibration}

We start by recalling the definition of a microfibration.

\begin{definition} \label{defn:microfibration}
A map $E \to B$ of topological spaces is a \emph{Serre microfibration} if for any $k \ge 0$, given a commutative diagram
\[\begin{tikzcd}
	D^k \times \{0\} \rar \dar & E \dar\\
	D^k \times [0, 1] \rar & B
\end{tikzcd}\]
there is an $\epsilon > 0$ and a map $D^k \times [0, \epsilon] \to E$ making the following diagram commute:
\[\begin{tikzcd}[baseline=(B.base)]
	D^k \times \{0\} \rar \dar & E \ar[dd]\\
	D^k \times [0, \epsilon] \ar[ru] \dar\\
	D^k \times [0, 1] \rar & |[alias=B]| B
\end{tikzcd}  \qedhere\]
\end{definition}

The following direct consequence of the definition is an abundant source of examples:
\begin{lemma}\label{lemma:open-in-fibration-is-microfibration}
If $p \from E \to B$ is a Serre fibration and $U \subset E$ is open, then $p|_U \from U \to B$ is a Serre microfibration.\qed
\end{lemma}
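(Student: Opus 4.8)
The plan is to lift into the ambient fibration $E$ and then use compactness of the disk to cut down to a small time interval on which the lift stays inside the open set $U$. Concretely, suppose we are given a commutative square as in \cref{defn:microfibration}, with top map $f \colon D^k \times \{0\} \to U$ and bottom map $H \colon D^k \times [0,1] \to B$ satisfying $p|_U \circ f = H|_{D^k \times \{0\}}$. First I would post-compose $f$ with the inclusion $U \hookrightarrow E$ to obtain a square with right-hand map the Serre fibration $p$; since $p$ is a Serre fibration, there is a full lift $\widetilde H \colon D^k \times [0,1] \to E$ with $\widetilde H|_{D^k \times \{0\}} = f$ (as a map to $E$) and $p \circ \widetilde H = H$.

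Next, consider the open set $\widetilde H^{-1}(U) \subseteq D^k \times [0,1]$. By construction it contains $D^k \times \{0\}$, because $f$ takes values in $U$. The key step is a tube lemma argument: since $D^k$ is compact, an open neighborhood of $D^k \times \{0\}$ in $D^k \times [0,1]$ contains a uniform slab $D^k \times [0,\epsilon]$ for some $\epsilon > 0$. Explicitly, for each $x \in D^k$ choose $\epsilon_x > 0$ and an open $V_x \ni x$ with $V_x \times [0,\epsilon_x] \subseteq \widetilde H^{-1}(U)$; extract a finite subcover $V_{x_1}, \dots, V_{x_m}$ of $D^k$ and set $\epsilon = \min_j \epsilon_{x_j}$. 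Then $D^k \times [0,\epsilon] \subseteq \widetilde H^{-1}(U)$.

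Finally, restricting $\widetilde H$ to $D^k \times [0,\epsilon]$ gives a map whose image lies in $U$; calling this $\widetilde H|_{D^k \times [0,\epsilon]} \colon D^k \times [0,\epsilon] \to U$, it satisfies $\widetilde H|_{D^k \times \{0\}} = f$ and $p|_U \circ \widetilde H|_{D^k \times [0,\epsilon]} = H|_{D^k \times [0,\epsilon]}$, which is exactly the partial lift required by \cref{defn:microfibration}. The only point requiring care is the tube lemma, and compactness of $D^k$ makes this entirely routine; there is no real obstacle here, which is why the lemma is stated without proof in the text.

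\begin{proof}
Given a commutative square
\[\begin{tikzcd}
	D^k \times \{0\} \rar["f"] \dar & U \dar["p|_U"]\\
	D^k \times [0, 1] \rar["H"] & B
\end{tikzcd}\]
compose $f$ with the inclusion $U \hookrightarrow E$; since $p$ is a Serre fibration there is a lift $\widetilde H \colon D^k \times [0,1] \to E$ with $\widetilde H|_{D^k \times \{0\}} = f$ and $p \circ \widetilde H = H$. The set $\widetilde H^{-1}(U)$ is open in $D^k \times [0,1]$ and contains $D^k \times \{0\}$. For each $x \in D^k$ pick $\epsilon_x > 0$ and an open neighborhood $V_x \ni x$ with $V_x \times [0, \epsilon_x] \subseteq \widetilde H^{-1}(U)$; by compactness of $D^k$ finitely many $V_{x_1}, \dots, V_{x_m}$ cover $D^k$, and with $\epsilon = \min_j \epsilon_{x_j} > 0$ we get $D^k \times [0,\epsilon] \subseteq \widetilde H^{-1}(U)$. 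Then $\widetilde H|_{D^k \times [0,\epsilon]}$ takes values in $U$, restricts to $f$ on $D^k \times \{0\}$, and satisfies $p|_U \circ \widetilde H|_{D^k \times [0,\epsilon]} = H|_{D^k \times [0,\epsilon]}$, proving that $p|_U$ is a Serre microfibration.
\end{proof}
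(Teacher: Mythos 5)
Your argument is correct and is exactly the standard one the paper has in mind when it leaves this lemma as a direct consequence of the definition: lift the homotopy fully in $E$ using the Serre fibration property, then use openness of $\widetilde H^{-1}(U)$ and compactness of $D^k$ (the tube lemma) to restrict to a slab $D^k \times [0,\epsilon]$ landing in $U$. No discrepancies with the intended proof.
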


In this paper, wherever we use the terms fibration and microfibration, we mean Serre fibration and Serre microfibration respectively.
Contrary to the case of fibrations, the homotopy types of the fibers of a general microfibration can vary.
Nonetheless, comparing the total spaces of a microfibration and a fibration can done via a result going back to Weiss and generalized by Raptis.
\begin{theorem}[{\cites[Lemma~2.2]{weiss_what_2005}[Theorem~1.3]{raptis_serre_2017}}]\label{thm:raptismicro}
Let $p \colon E \to B$ be a Serre microfibration, $q \colon V \to B$ be a Serre fibration, and $f \colon E \to V$ a map over $B$. Suppose that $f_b \colon p^{-1}(b) \to q^{-1}(b)$ is $n$-connected for some $n \geq 1$ and for all $b \in B$. Then the map $f \colon E \to V$ is $n$-connected.
\end{theorem}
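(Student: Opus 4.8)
The plan is to translate $n$-connectedness into a relative lifting statement, use the Serre fibration $q$ to transport that statement down to $B$, and then extract from the microfibration hypothesis on $p$ just enough approximate homotopy lifting to run the usual cellular obstruction argument, with the $n$-connectivity of the fibres absorbing the fact that a microfibration lifts homotopies only for a short, a priori non-uniform, time.

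\textbf{Reformulation and reduction to $B = D^k$.}
Recall that $f\colon E\to V$ is $n$-connected if and only if, for every $0\le k\le n$ and every commutative square with left edge the inclusion $S^{k-1}\hookrightarrow D^k$, top edge $a\colon S^{k-1}\to E$, bottom edge $\beta\colon D^k\to V$ and right edge $f$, there is a map $c\colon D^k\to E$ with $c|_{S^{k-1}}=a$ and $f\circ c\simeq\beta$ rel $S^{k-1}$ (equivalently: every homotopy fibre of $f$ is $(n-1)$-connected). Fix such a square. Composing the bottom edge with $q$ gives $q\beta\colon D^k\to B$, of which $a$ is a partial lift along $p$ over $S^{k-1}$ and $\beta$ is a lift along $q$. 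Pulling $p$, $q$ and $f$ back along $q\beta$ we may assume that $B=D^k$: the pullback of a Serre fibration, resp. microfibration, along any map is again one; the fibres, hence the hypothesis that each $f_b$ is $n$-connected, are unchanged; the square lifts tautologically to the pullback; and a solution of the pulled-back square pushes forward to a solution of the original one. After this reduction $q\colon V\to D^k$ is a fibration over a contractible base and $\beta$ is an honest section of $q$, so our task is a \emph{fibrewise} lifting problem for $f$ over $D^k$, relative to $S^{k-1}$.

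\textbf{Lifting through $f$.}
Were $f$ a Serre fibration this would now be routine: by cellular induction over $(D^k,S^{k-1})$, the obstruction to extending a lift over a new $j$-cell is a class in $\pi_{j-1}$ of the fibre of $f$ at the relevant point of $V$, and since $f_b$ is $n$-connected that fibre is $(n-1)$-connected while $j-1\le k-1\le n-1$; hence every obstruction vanishes and, after a vertical homotopy of $\beta$ rel $S^{k-1}$, a lift $c$ is produced. What is genuinely available is only that $f$ — equivalently $p=qf$ — is a microfibration, so one runs the same induction \emph{approximately}: the microfibration property provides a lift of the relevant homotopy for some positive time; compactness of the (finitely many, compact) cells makes that time uniform; and the mismatch remaining at the end of the parameter interval is governed by the homotopy of a single fibre $f_b$ in degrees $\le n-1$, so it can be corrected using the null-homotopies already supplied by the $n$-connectivity of $f_b$. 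Iterating over a sufficiently fine subdivision of the interval produces $c$ together with the homotopy $f\circ c\simeq\beta$, and by the reformulation above this proves $f$ is $n$-connected. (This is exactly the Weiss--Raptis argument; one may alternatively just invoke \cites[Lemma~2.2]{weiss_what_2005}[Theorem~1.3]{raptis_serre_2017}.)

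\textbf{Main obstacle.}
The crux is the last step. Unlike a fibration, a microfibration lifts a given homotopy only for an a priori non-uniform short time, so the set of parameters over which an \emph{honest} lift exists is open but need not be closed, and a naive subdivision can fail because the available lifting times may shrink to zero. The whole content of the argument is that this defect is invisible at the level of connectivity: it can always be repaired using the $n$-connectivity of the fibrewise maps $f_b$, so one must work throughout with lifts up to fibrewise homotopy rather than with strict lifts, and the $n$-connectivity hypothesis is precisely what gets consumed in the repairs.
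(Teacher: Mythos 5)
You are attempting a statement that the paper itself does not prove: \cref{thm:raptismicro} is quoted verbatim with citations to \cites[Lemma~2.2]{weiss_what_2005}[Theorem~1.3]{raptis_serre_2017}, so there is no in-paper argument to compare against, and simply invoking those references (as your last sentence offers) is exactly what the authors do. Judged as a standalone proof, however, your sketch has a genuine gap at precisely the step you label the crux. The reduction to $B = D^k$ and the lifting characterization of $n$-connectedness are fine, but the engine you then invoke --- ``the microfibration property provides a lift of the relevant homotopy for some positive time'' --- is not available: after your reduction the homotopies to be lifted live in $V$ (they are vertical homotopies over a contractible base), and lifting them through $f$ would require $f$ itself to be a Serre microfibration. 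Your parenthetical claim that $f$ is ``equivalently'' a microfibration because $p = q\circ f$ is one is false in the needed direction: take $B$ a point, $E$ a point, $V = [0,1]$, and $f$ the inclusion of $\{0\}$; then $p$ and $q$ are (micro)fibrations, but the path $t \mapsto t$ admits no lift through $f$ for any positive time. (In that degenerate case the theorem holds trivially, but it kills the implication your mechanism rests on.) What the hypothesis on $p$ actually gives is short-time lifts controlling only the $B$-coordinate, and over a contractible base that control is essentially vacuous, so the ``approximate cellular induction'' as described cannot start.

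Beyond this, the remaining half of the argument --- uniformity of lifting times by compactness, then ``the mismatch \dots can be corrected using the null-homotopies already supplied by the $n$-connectivity of $f_b$,'' iterated over a fine subdivision --- is the entire content of the Weiss--Raptis theorem and is asserted rather than carried out. The repair step is the delicate part: the correcting homotopies take place in single fibers but must be made compatible, cell by cell and relative to previously constructed data, with lifts that only exist for short and non-uniform times, and your own ``main obstacle'' paragraph describes this difficulty without resolving it. As written, the text is an accurate plan plus an appeal to the literature, not a proof; to be self-contained it would need to reproduce the actual induction of Weiss and Raptis (or an equivalent argument) rather than gesture at it.
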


As we are interested in homology rather than homotopy groups, we will need to slightly adapt Raptis' theorem to our needs. We first introduce some notation.
\begin{definition}
For a map $p \colon E \to B$, its fiberwise (unreduced) $k$th suspension is defined to be
\[
	\Sigma^k_B E = \left( E \times [0,1] \times S^{k-1} \right) \bigg/ \big((e,0,s) \sim (e,0,s') \text{ and } (e,1,s) \sim (e',1,s) \text{ when } p(e) = p(e') \big).
\]
The fiber of the natural map $\Sigma^k_B p \from \Sigma^k_B E \to B$ induced by $p$ is the unreduced $k$th suspension of the fiber of $p$ (here modeled as the join with the sphere $S^{k-1}$):
\[
	(\Sigma^k_B p)^{-1}(b) = \Sigma^k p^{-1}(b) \qquad \forall b \in B. \qedhere
\]
\end{definition}

\begin{definition}
For a natural number $m$, say a map of topological spaces $A \to B$ is \emph{homology $m$-connected} if it induces an isomorphism on homology groups $H_i(A) \to H_i(B)$ for $i < m$ and a surjection when $i = m$.
\end{definition}

\begin{lemma}\label{adapted-microcomparison}
Let $q \colon V \to B$ be a fiber bundle, and $p \colon U \to B$ be the restriction of a fiber bundle $E \to B$ to an open subset $U \subset E$. Let $f \colon U \to V$ be a map over $B$ and suppose that for every $b \in B$, the restriction to the fiber
\[
	f_b \colon p^{-1}(b) \lra q^{-1}(b)
\]
is homology $m$-connected. Then $f \colon U \to V$ is homology $m$-connected.
\end{lemma}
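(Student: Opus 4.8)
The statement of \cref{adapted-microcomparison} is a homological analogue of \cref{thm:raptismicro}, where connectivity of maps is replaced by homology connectivity. The plan is to deduce it from \cref{thm:raptismicro} by a fiberwise suspension trick: suspending enough times turns the (possibly non-simply-connected) fibers into simply connected spaces, at which point homology connectivity and homotopy connectivity agree (by Hurewicz), so the homotopical statement can be invoked and then translated back.

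First I would record the behaviour of fiberwise suspension under the hypotheses. Since $q \colon V \to B$ is a fiber bundle, the map $\Sigma^k_B q \colon \Sigma^k_B V \to B$ is again a fiber bundle; similarly $\Sigma^k_B E \to B$ is a fiber bundle and $\Sigma^k_B U$ is an open subset of it (the suspension coordinates are unconstrained), so $\Sigma^k_B p \colon \Sigma^k_B U \to B$ is the restriction of a fiber bundle to an open subset, matching the hypotheses of the lemma. The map $f$ induces $\Sigma^k_B f \colon \Sigma^k_B U \to \Sigma^k_B V$ over $B$, and on fibers it is $\Sigma^k f_b \colon \Sigma^k p^{-1}(b) \to \Sigma^k q^{-1}(b)$. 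Now for $k$ large the spaces $\Sigma^k p^{-1}(b)$ and $\Sigma^k q^{-1}(b)$ are simply connected (indeed $(k-1)$-connected), so by the relative Hurewicz theorem applied to the mapping cylinder, the map $\Sigma^k f_b$ being homology $(m+k)$-connected is equivalent to its being $(m+k)$-connected as a map of spaces. A standard computation with the suspension isomorphism — or with the long exact sequence of the pair, using that suspension shifts homology by $k$ — shows that $f_b$ homology $m$-connected implies $\Sigma^k f_b$ is homology $(m+k)$-connected, hence $(m+k)$-connected.

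Next I would apply \cref{thm:raptismicro} to $\Sigma^k_B f$: taking $k \ge 2$ so that $m + k \ge 1$ and the fibers are simply connected, the hypothesis ``$f_b$ homology $m$-connected for all $b$'' yields ``$\Sigma^k f_b$ is $(m+k)$-connected for all $b$'', and since $\Sigma^k_B p$ is in particular a Serre microfibration (being the restriction of a fibration to an open set, via \cref{lemma:open-in-fibration-is-microfibration}) and $\Sigma^k_B q$ is a Serre fibration, we conclude $\Sigma^k_B f \colon \Sigma^k_B U \to \Sigma^k_B V$ is $(m+k)$-connected, in particular homology $(m+k)$-connected. It remains to descend from the suspension back to $f$. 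This is where I would compare the two fibration (resp. microfibration) sequences $U \to \Sigma^k_B U \to B$-style data: more cleanly, use that $\Sigma^k_B U$ is built from $U \times [0,1] \times S^{k-1}$ and that the Serre/Leray spectral sequences of $\Sigma^k_B p \to B$ and $p \to B$ are related by a suspension-type shift in the local coefficient systems $H_*(\text{fiber})$. Since $f$ is a map over $B$, it induces a map of these spectral sequences; the fact that it is an isomorphism on $E^2$-pages for the suspended version in total degree $< m + k$ translates, degree by degree in the local systems, to an isomorphism on $E^2$-pages for the original version in total degree $< m$, giving that $f$ is homology $m$-connected.

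\textbf{Main obstacle.} The delicate point is the last step, the "desuspension": extracting homology $m$-connectivity of $f$ from homology $(m+k)$-connectivity of $\Sigma^k_B f$. The issue is that fiberwise suspension does not interact with the total space homology by a naive suspension isomorphism — one really has to work at the level of the local coefficient systems $b \mapsto H_*(p^{-1}(b))$ versus $b \mapsto H_*(\Sigma^k p^{-1}(b)) = \widetilde H_{*-k}(p^{-1}(b)) \oplus (\text{degree } 0)$, and track how the Serre spectral sequences (or rather, for a microfibration, the comparison of the two spectral sequences afforded by the map $f$ over $B$) see this shift. The cleanest route is probably to avoid total-space homology of the suspensions altogether: instead argue that $f_b$ homology $m$-connected for all $b$ already gives a map of Serre-type spectral sequences over $B$ which is an iso on $E^2$ below degree $m$, provided one knows $p$ behaves enough like a fibration for such a spectral sequence to exist — and this is exactly the role of the microfibration hypothesis together with \cref{thm:raptismicro}, which is used precisely to upgrade the fiberwise statement to a statement about $U$ before taking homology. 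So in practice the suspension is the device that legitimises using \cref{thm:raptismicro} (which needs $n \ge 1$ and ideally simply connected fibers) even when $m = 0$ or the fibers are not simply connected, and the bookkeeping of the suspension shift is the part requiring care.
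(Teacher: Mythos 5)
Your overall strategy is the same as the paper's (fiberwise suspension, homology Whitehead theorem on the suspended fibers, then \cref{thm:raptismicro}), but there is a genuine gap at the crucial technical point. You assert that $\Sigma^k_B U$ is an open subset of the fiber bundle $\Sigma^k_B E \to B$ ``since the suspension coordinates are unconstrained'', and use this to say $\Sigma^k_B U \to B$ satisfies the hypotheses needed to invoke \cref{thm:raptismicro}. This is false: in the fiberwise suspension the identification $(e,1,s) \sim (e',1,s)$ for $p(e)=p(e')$ collapses the \emph{entire} fiber $E_b$ at the top end, so any neighborhood in $\Sigma^k_B E$ of a point $[u,1,s]$ with $u \in U$ contains points $[e',t,s]$ with $e' \in E_b \setminus U$ and $t$ close to $1$; hence $\Sigma^k_B U$ is not open in $\Sigma^k_B E$ (unless $U$ contains whole fibers), and it is not clear that $\Sigma^k_B U \to B$ is a Serre microfibration at all. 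Since the microfibration hypothesis is exactly what \cref{thm:raptismicro} needs, your application of that theorem to $\Sigma^k_B f$ is unjustified as written. The paper repairs this by enlarging $\Sigma^2_B U$ to the genuinely open subset $W = \big(\Sigma^2_B U\big) \cup \big((E \times (0.5,1] \times S^1)/\sim\big) \subset \Sigma^2_B E$, checking that $(W, W_b) \simeq (\Sigma^2_B U, \Sigma^2 U_b)$ fiberwise, and applying \cref{thm:raptismicro} to the composite $W \to \Sigma^2_B U \to \Sigma^2_B V$. Your argument can be completed the same way, but some such replacement is indispensable.

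A secondary issue is your descent step. You propose comparing Serre/Leray spectral sequences over $B$ with local coefficients $H_*(p^{-1}(b))$, but $p$ is only a microfibration, its fibers can vary in homotopy type, and no such spectral sequence is available in general; \cref{thm:raptismicro} upgrades fiberwise connectivity to total-space connectivity but does not produce a spectral sequence, so the remarks in your ``main obstacle'' paragraph do not resolve this. The paper avoids the problem entirely: once $\Sigma^2_B f$ is known to be homology $(m+2)$-connected, it compares the Mayer--Vietoris sequences of the two fiberwise suspensions (decomposed into the two cone-like pieces, whose intersection is homotopy equivalent to $U \times S^1$, resp.\ $V \times S^1$) to deduce that $f$ itself is homology $m$-connected. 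That comparison uses only total-space homology and needs no fibration-like structure on $p$, so I would replace your spectral-sequence sketch with it.
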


\begin{proof}
For any $b \in B$, the suspension of $f$ on the fiber
\[
	\Sigma^2 f_b \colon \Sigma^2 p^{-1}(b) \lra \Sigma^2 q^{-1}(b)
\]
induces an isomorphism in homology in degrees $* \leq m+1$ and a surjective morphism in degree $* = m+2$. As both spaces are simply connected, the homology Whitehead theorem implies that this map is $(m+2)$-connected. We would like to apply \cref{thm:raptismicro} to $\Sigma^2_B f$, but $\Sigma^2_B U \subset \Sigma^2_B E$ is not open and it is unclear if $\Sigma^2_B U \to B$ is a microfibration. We resolve the issue by enlarging slightly the space to a homotopy equivalent one. More precisely, let
\[
	W = \big(\Sigma^2_B U\big) \cup \big( (E \times (0.5,1] \times S^1)) / \sim\big) \subset \Sigma^2_B E,
\]
and denote by $E_b, W_b, U_b$ the fibers of the respective spaces above a point $b \in B$. Using in each fiber the homotopy equivalence $\big((E_b \times (0.5,1] \times S^1)/\sim\big) \simeq S^1$ given by collapsing gives a homotopy equivalence
\[
	(W, W_b) \overset{\simeq}{\lra} (\Sigma^2_B U, \Sigma^2 U_b)
\]
for all $b \in B$. Now, the fiberwise suspension of the fiber bundles $E \to B$ and $V \to B$ are fiber bundles. As $W \subset \Sigma^2_B E$ is open, the restriction $W \to B$ is a microfibration. Applying \cref{thm:raptismicro} to the composite
\[
	W \overset{\simeq}{\lra} \Sigma^2_B U \overset{\Sigma^2_B f}{\lra} \Sigma^2_B V
\]
and using that the first map is a homotopy equivalence, we obtain that $\Sigma^2_B f \colon \Sigma^2_B U \to \Sigma^2_B V$ is $(m+2)$-connected. Hence it is homology $(m+2)$-connected. Comparing the Mayer--Vietoris sequences of the fiberwise suspensions finally shows that $f \colon U \to B$ is homology $m$-connected.
\end{proof}

\subsection{Finishing the proof of \texorpdfstring{\cref{maintheorem:comparison}}{Theorem~\ref*{maintheorem:comparison}}}

As promised, we show that the maps in the diagram~\eqref{eqn:main-proof-diagram} are respectively a microfibration and a fibration.

\begin{lemma}\label{continuous-evaluation-is-fiber-bundle}
The evaluation map
\begin{equation*}
\begin{split}
	\mathrm{ev} \colon \Gamma_{\cC^0}(J^1(\cL) - 0) \times \F^r(X) &\lra (J^1(\cL) - 0)^r|_{\F^r(X)} \\
	(f, x_1, \dotsc, x_r) &\longmapsto (f(x_1), \dotsc, f(x_r))
\end{split}
\end{equation*}
is a fiber bundle. Therefore, so is the pullback $\Z^r_{\cC^0}(\cL) \to (\Omega^1(\cL) - 0)^r |_{\F^r(X)}$.
\end{lemma}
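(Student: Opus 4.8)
The plan is to exhibit explicit local trivializations of $\mathrm{ev}$ over the base $(J^1(\cL) - 0)^r|_{\F^r(X)}$. The key point is that $\mathrm{ev}$ is a "restriction of sections to an $r$-fold configuration" map, so it should be a bundle over the configuration space by a standard partition-of-unity argument, provided we keep careful track of the non-vanishing condition.

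First I would reduce to a local statement on $\F^r(X)$. Choose a point $\vec{x}_0 = (x_1^0,\dotsc,x_r^0) \in \F^r(X)$ and pick disjoint open neighbourhoods $V_i \ni x_i^0$ in $X$ over which $J^1(\cL)$ is trivial, say $J^1(\cL)|_{V_i} \cong V_i \times \bC^{n+1}$, and such that $\prod_i V_i$ (intersected with $\F^r(X)$) is a neighbourhood of $\vec{x}_0$. Over this neighbourhood the bundle $(J^1(\cL)-0)^r|_{\F^r(X)}$ is identified with an open subset of $(\prod_i V_i) \times (\bC^{n+1}-0)^r$. Now I would build a trivialization of $\mathrm{ev}$ over this chart. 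Fix auxiliary "reference points" $y_i \in V_i$ and trivialize $J^1(\cL)$ near each $y_i$; more importantly, choose a smooth family of diffeomorphisms $\varphi_{\vec{x}} \colon X \to X$, depending continuously on $\vec{x}$ in the chart, with $\varphi_{\vec{x}}(x_i^0) = x_i$ and $\varphi_{\vec{x}} = \mathrm{id}$ outside $\bigsqcup_i V_i$, together with bundle isomorphisms covering them (using the local trivializations on the $V_i$). Pulling sections back along $\varphi_{\vec{x}}$ lets us assume the marked points are constant, equal to $\vec{x}_0$, so we are reduced to trivializing the map $\Gamma_{\cC^0}(J^1(\cL)-0) \times \{\vec{x}_0\} \times (\bC^{n+1}-0)^r \to (\bC^{n+1}-0)^r$, $f \mapsto (f(x_1^0),\dotsc,f(x_r^0))$ — now over a fixed fiber coordinate in $(\bC^{n+1}-0)^r$. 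For this, pick a partition of unity $\{\lambda_i\}$ with $\lambda_i \equiv 1$ near $x_i^0$ and $\mathrm{supp}(\lambda_i) \subset V_i$, and define, for $f \in \Gamma_{\cC^0}(J^1(\cL)-0)$ with $f(x_i^0) = v_i^0$ and target value $\vec{v} = (v_1,\dotsc,v_r) \in (\bC^{n+1}-0)^r$, the section $f \cdot \prod_i(1 - \lambda_i) + \sum_i \lambda_i \cdot (\text{section equal to } v_i \text{ near } x_i^0)$ — here $v_i$ is a nonzero vector in the trivialized fiber, so the constant-vector section is non-vanishing on $\mathrm{supp}(\lambda_i)$. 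Since at every point of $X$ the resulting value is a convex-type combination that is a positive multiple of either $f(x)$ (nonzero) or one of the $v_i$ (nonzero) — more precisely a nonnegative combination that is nonzero because on $\mathrm{supp}(\lambda_i)$ we can arrange the "reference" section to never be a negative multiple of... — actually the clean way is to observe the construction shows every fiber $\mathrm{ev}^{-1}(\vec{v}_0)$ is homeomorphic to $\Gamma_{\cC^0}(J^1(\cL)-0; \{x_i^0 \mapsto v_i^0\})$ and the above gives a continuous trivialization $\mathrm{ev}^{-1}(\text{chart}) \cong \mathrm{chart} \times U_{\cC^0}(\vec{v}_0)$.

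The main obstacle is ensuring the interpolated section remains \emph{non-vanishing} while varying the target value $\vec{v}$ over all of $(\bC^{n+1}-0)^r$, not just near $\vec{v}_0$. This is where I would be careful: rather than a naive convex combination $\sum_i \lambda_i v_i + (1-\sum_i\lambda_i) f$, which could vanish, I would use that on the support of $\lambda_i$ we may first homotope $f$ (rescaling in the trivialized chart, possible since $(\bC^{n+1}-0)$ is not simply the question of rescaling) — concretely, shrink $V_i$ so that $f$ restricted to a slightly larger neighbourhood is close to $f(x_i^0)$, then the straight-line path from $f(x)$ to $v_i$ in $\bC^{n+1}$ misses $0$ provided we also allow ourselves to first move $v_i$ along a path to $v_i^0$; since $(\bC^{n+1}-0)$ is path-connected and the relevant choices can be made continuously in the chart, this works. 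I would then patch the local trivializations over $\F^r(X)$ — they need not agree but that is irrelevant for the bundle property — concluding that $\mathrm{ev}$ is a fiber bundle with fiber $U_{\cC^0}(\vec v)$. The final sentence of the lemma is then immediate: the map $\Z^r_{\cC^0}(\cL) \to (\Omega^1(\cL)-0)^r|_{\F^r(X)}$ is, by the pullback square in \eqref{eqn:main-proof-diagram}, the base change of $\mathrm{ev}$ along the inclusion $(\Omega^1(\cL)-0)^r|_{\F^r(X)} \hookrightarrow (J^1(\cL)-0)^r|_{\F^r(X)}$, and a pullback of a fiber bundle is a fiber bundle.
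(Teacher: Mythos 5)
Your overall frame---work in a product chart around $\vec{x}_0 \in \F^r(X)$, move the marked points by a continuous family of (bundle maps covering) homeomorphisms of $X$, and finish the last sentence of the lemma by base change along $(\Omega^1(\cL)-0)^r|_{\F^r(X)} \hookrightarrow (J^1(\cL)-0)^r|_{\F^r(X)}$---is sound and close in spirit to the paper. The genuine gap is in how you handle the fiber directions of the base. You insist on trivializing over charts of the form $W \times (\bC^{n+1}-0)^r$ with the \emph{entire} punctured fiber as a factor, and you adjust the value of a section at $x_i^0$ by a cut-off linear interpolation. Two steps fail. First, the interpolated section is non-vanishing only if $f$ stays close to $f(x_i^0)$ on the support of $\lambda_i$; your fix is to shrink $V_i$ so that this holds, but the required shrinking depends on the individual section $f$ (continuous sections can oscillate arbitrarily fast near $x_i^0$), so no single choice works for all $f$ at once and the recipe does not define one continuous map on $\mathrm{ev}^{-1}(\text{chart})$. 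Second, the rescue ``first move $v_i$ along a path to $v_i^0$; since $\bC^{n+1}-0$ is path-connected \dots this works'' requires the paths to be chosen \emph{continuously in $v_i$ for all $v_i \in \bC^{n+1}-0$}, i.e.\ a homotopy $H \colon (\bC^{n+1}-0)\times[0,1] \to \bC^{n+1}-0$ from the identity to the constant map at $v_i^0$. That is a contraction of a space homotopy equivalent to $S^{2n+1}$ and does not exist; path-connectedness only gives individual paths. Indeed, evaluation fibrations of section/mapping spaces over a sphere-like base are in general not globally trivial, so the stronger statement you are aiming at is not to be expected, and your argument for it breaks at exactly this point.

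The repair is to demand less: a fiber bundle needs only local trivializations, and ``local'' may be taken in the $v$-directions too. This is what the paper does: around a point $(x,v)$ of $J^1(\cL)-0$, choose a chart $B(x,1)$ with $(J^1(\cL)-0)|_{B(x,1)} \cong B(x,1) \times (\bR^{2n+2}-0)$ and a small ball $B(v,\varepsilon) \subset \bR^{2n+2}-0$, then pick continuous families of homeomorphisms $\varphi(w)$ of $B(v,\varepsilon)$ (sending $v \mapsto w$ for $w \in B(v,\varepsilon/4)$, identity near the boundary) and $\phi(y)$ of $B(x,1)$ (sending $x \mapsto y$), extended by the identity; the assignment $\big((y,w),s\big) \mapsto \big(y,\ \varphi(w)\circ s \circ \phi(y)^{-1}\big)$ trivializes $\mathrm{ev}$ over $B(x,1/4)\times B(v,\varepsilon/4)$. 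Since sections are modified by post-composition with homeomorphisms of the punctured total space, non-vanishing is preserved automatically---no interpolation, no partition of unity, no dependence on $f$. For $r \ge 2$ one performs this in disjoint neighbourhoods of the $x_i$. Your concluding base-change observation for $\Z^r_{\cC^0}(\cL) \to (\Omega^1(\cL)-0)^r|_{\F^r(X)}$ is correct as stated.
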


\begin{proof}
We first treat the case $r = 1$ to lighten the notation. Let $(x,v) \in J^1(\cL) - 0$, with $x \in X$ and $0 \neq v \in J^1(\cL)_x$. Using charts on the manifold $X$, we choose a small open ball $B(x,1) \subset \bR^{2n} \subset X$ centered at $x$ and of radius $1$. Using the local triviality of the jet bundle, we obtain a homeomorphism $(J^1(\cL) -0)|_U \cong U \times (\bR^{2n+2} - 0)$. We choose a small $\varepsilon > 0$ and let $B(v,\varepsilon) \subset \bR^{2n+2} - 0$ be a small open ball neighborhood of $v$. Pick continuous maps
\[
	\varphi \colon B(v,\varepsilon/4) \lra \mathrm{Homeo}(B(v,\varepsilon))
\]
and
\[
	\phi \colon B(x,1/4) \lra \mathrm{Homeo}(B(x,1))
\]
such that $\varphi(w)$ is a homeomorphism sending $v$ to $w$ and is the identity outside $B(v,\varepsilon/2)$, and $\phi(y)$ is a homeomorphism sending $x$ to $y$ and is the identity outside $B(x,1/2)$. 
%These can be constructed by choosing a vector fields, bumped to vanish outside the half ball, and taking their flows. (For example, $\varphi(w)$ can be taken to be the evaluation at time $1$ of the flow starting at $v$ along the constant vector field $w -v$, bumped to vanish outside $B(v,1/2)$ and constant on $B(v,1/4)$. The continuity of $w \mapsto \varphi(w)$ is a consequence of the continuity of the flow for continuously varying vector fields.)
Slightly abusing notation, we still denote by $\varphi(w)$ and $\phi(y)$ the homeomorphisms of $J^1(\cL) - 0$ and $X$ respectively obtained by extending by the identity. We use them to construct a local trivialization of the evaluation map above the subset $A = B(x,1/4) \times B(v,\varepsilon/4) \subset J^1(\cL) - 0$ as follows:
\begin{align*}
	A \times \{s \in \Gamma_{\cC^0}(J^1(\cL) - 0) \mid s(x) = v \} &\overset{\cong}{\longleftrightarrow} \mathrm{ev}^{-1}(A) = \{ (y,s) \mid y \in B(x,1/4), \ s(y) \in B(v,\varepsilon/4) \} \\
	\big((y,w), s\big) &\longmapsto \big(y, \varphi(w) \circ s \circ \phi(y)^{-1}\big) \\
	\big(y, s(y), \varphi(s(y))^{-1} \circ s \circ \phi(y)\big) &\longmapsfrom \big(y,s\big).
\end{align*}
One directly checks that the two given maps are inverse to each other.

We now return to the general case where $r \geq 2$. To construct a local trivialization above a neighborhood of a point $\big( (x_1,v_1),\dotsc,(x_r,v_r) \big) \in (J^1(\cL) - 0)^r|_{\F^r(X)}$, it suffices to pick neighborhoods of the $x_i$ and apply the argument above at each of them. By choosing the neighborhoods small enough and disjoint, the homeomorphisms constructed as above may be composed to obtain a local trivialisation.
\end{proof}

On the algebraic side, for the jet evaluation map to even be surjective, we need the line bundle $\cL$ to have enough sections. As a direct consequence of the definition of jet ampleness (see \cref{defn:jet-ample}), we have the following refinement of \cite[Lemma~3.2]{vakil_discriminants_2015}.
\begin{lemma}\label{jet-expansion-is-micro-fibration}
Suppose that $\cL$ is $(2r-1)$-jet ample. Then the map
\begin{equation*}
\begin{split}
	\set{(f,x_1,\dotsc,x_r) \in \Gammahol(\cL) \times \F^r(X)}{ \forall i, \ f(x_i) = 0} &\lra (\Omega^1(\cL))^r |_{\F^r(X)} \\
	(f, x_1, \dotsc, x_r) &\longmapsto (\diff f(x_1), \dotsc, \diff f(x_r))
\end{split}
\end{equation*}
is a fiber bundle. The subset $\Z^r(\cL)$ is open in the domain, hence the restriction
\[
    \Z^r(\cL) \lra (\Omega^1(\cL) - 0)^r |_{\F^r(X)}
\]
is a microfibration.
\end{lemma}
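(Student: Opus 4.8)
The plan is to establish the first assertion — that the derivative-evaluation map is a fiber bundle — by an explicit local-trivialization argument entirely parallel to the proof of \cref{continuous-evaluation-is-fiber-bundle}, and then to deduce the microfibration statement for $\Z^r(\cL)$ from \cref{lemma:open-in-fibration-is-microfibration}. First I would handle surjectivity of the map, which is where jet ampleness enters: over a point $\bigl((x_1,v_1),\dotsc,(x_r,v_r)\bigr) \in (\Omega^1(\cL))^r|_{\F^r(X)}$, one needs a holomorphic section $f$ with $f(x_i)=0$ and $\diff f(x_i) = v_i$ for all $i$. Prescribing both the value ($0$) and the first derivative ($v_i$) at a point is exactly prescribing the image of $f$ in $\cL_{x_i}/\fm_i^2\cL_{x_i}$; doing this simultaneously at the $r$ distinct points $x_1,\dotsc,x_r$ asks for surjectivity of $\Gammahol(\cL) \to \bigoplus_{i=1}^r \cL_{x_i}/\fm_i^2\cL_{x_i}$, which is precisely the case $t=r$, $k_i = 2$, $k = 2r-1$ of \cref{defn:jet-ample}. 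So $(2r-1)$-jet ampleness gives surjectivity, and moreover the fiber over any point is an affine subspace (a coset of the subspace $V(\vec x) = \{\, f : f(x_i)=0,\ \diff f(x_i)=0\ \forall i \,\}$), hence non-empty and contractible.

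Next I would construct local trivializations. As in \cref{continuous-evaluation-is-fiber-bundle} it suffices to treat $r=1$ and then assemble over small disjoint neighborhoods of the $x_i$. Fix $(x,v) \in \Omega^1(\cL)$. The key difference from the continuous case is that we cannot move sections around by arbitrary homeomorphisms of the target, since we must stay inside $\Gammahol(\cL)$ — a finite-dimensional vector space. Instead, using $(2r-1)$-jet ampleness (or already $1$-jet ampleness here for $r=1$) I would choose, for each $y$ near $x$ and each $w$ near $v$, a \emph{holomorphic} section $g_{y,w}$ depending continuously on $(y,w)$ with $g_{y,w}(y) = 0$ and $\diff g_{y,w}(y) = w$; concretely, pick a finite-dimensional affine slice of $\Gammahol(\cL)$ mapping isomorphically onto $\cL_{x}/\fm_x^2 \oplus (\text{nearby data})$ and let $g_{y,w}$ be the preimage of the point specifying value $0$ and derivative $w$ at $y$, which varies continuously (indeed holomorphically in $w$) by the implicit function theorem / linear algebra once the evaluation-jet map is surjective with continuously-varying kernel. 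Then the trivialization over a small neighborhood $A$ of $(x,v)$ is
\[
A \times V(x) \;\xrightarrow{\ \cong\ }\; \mathrm{ev}_{\diff}^{-1}(A), \qquad \bigl((y,w), h\bigr) \longmapsto \bigl(y,\ g_{y,w} + h'\bigr),
\]
where $h'$ is the image of $h \in V(x)$ under a continuously-varying linear isomorphism $V(x) \isom V(y)$ of the "vanishing to first order at the marked point" subspaces (again furnished by jet ampleness, since these subspaces have locally constant dimension and vary continuously). The inverse sends $(y,s) \mapsto \bigl((y,\diff s(y)), s - g_{y,\diff s(y)}\bigr)$. One checks these are mutually inverse and continuous, exactly as in the cited lemma.

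Finally, $\Z^r(\cL) = \{(f,\vec x) : f \in U(\cL),\ f(x_i)=0\}$ is the intersection of the domain of the above bundle with the open set $U(\cL) \times \F^r(X)$, hence is open in that domain; and it maps to $(\Omega^1(\cL)-0)^r|_{\F^r(X)}$, the restriction of the ambient bundle to the open complement of the zero sections (non-singularity of $f$ forces $\diff f(x_i) \neq 0$ since $f(x_i)=0$). By \cref{lemma:open-in-fibration-is-microfibration} the restriction of a fiber bundle to an open subset is a microfibration, which gives the claim. I expect the only real subtlety to be the second step — arranging that the local holomorphic sections $g_{y,w}$ and the identifications $V(x)\isom V(y)$ vary \emph{continuously} in the base parameters; this is a matter of choosing a fixed finite-dimensional complement once and for all in a neighborhood and invoking that the jet-evaluation maps form a surjection of vector bundles (by jet ampleness) so that kernels and sections can be chosen continuously, but it is the place where one has to be careful, in contrast to the purely topological flexibility available in \cref{continuous-evaluation-is-fiber-bundle}.
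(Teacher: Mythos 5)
Your proposal is correct and in substance the same as the paper's proof: $(2r-1)$-jet ampleness (the case $t=r$, all $k_i=2$ of \cref{defn:jet-ample}) gives fiberwise surjectivity of a linear map of vector bundles over $\F^r(X)$, which yields local triviality, and the microfibration claim then follows from openness of $\Z^r(\cL)$ together with \cref{lemma:open-in-fibration-is-microfibration}. The paper packages the first step more briefly — it views the map as the pullback along $(\Omega^1(\cL))^r|_{\F^r(X)} \hookrightarrow (J^1(\cL))^r|_{\F^r(X)}$ of the full jet-evaluation $\Gammahol(\cL)\times\F^r(X) \to (J^1(\cL))^r|_{\F^r(X)}$, a fiberwise surjective linear bundle map and hence an affine bundle — so your hand-built local trivializations (continuously chosen sections $g_{y,w}$ and splittings $V(x)\cong V(y)$) are just an explicit unpacking of that same observation.
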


\begin{proof}
The map of the lemma is the pullback of
\begin{align*}
    \Gammahol(\cL) \times \F^r(X) &\lra (J^1(\cL))^r|_{\F^r(X)} \\
    (f,x_1,\dotsc,x_r) &\longmapsto (j^1(f)(x_1),\dotsc,j^1(f)(x_r))
\end{align*}
along the inclusion $(\Omega^1(\cL))^r|_{\F^r(X)} \hookrightarrow (J^1(\cL))^r|_{\F^r(X)}$. It is then enough to show that this latter map is a fiber bundle. Both the domain and codomain are vector bundles on $\F^r(X)$ and the map is linear in each fiber. By the assumption on the jet ampleness of $\cL$ it is also fiberwise surjective, and is therefore an affine bundle. The second part of the lemma follows directly from \cref{lemma:open-in-fibration-is-microfibration}.
\end{proof}

\begin{proof}[Proof of \cref{theorem:main-theorem}]
It suffices to apply \cref{adapted-microcomparison} to the diagram~\eqref{eqn:main-proof-diagram}. Its assumptions are fulfilled by virtue of \cref{jet-expansion-is-micro-fibration,continuous-evaluation-is-fiber-bundle,h-principle-prescribed-derivatives}.
\end{proof}

\section{Stability with rational coefficients}
\label{sec:rational-model}

In this section, we construct a commutative differential graded algebra (\cdga{}) modelling the rational homotopy type of $\Z^r_{\cC^0}(\cL)$. Our construction only depends on known models for configuration spaces and mapping spaces, as well as basic methods from rational homotopy. We first recall the former, and use the latter to deduce \cref{maintheorem:stability}.

As a matter of notation, we will write $\Symgr(V)$ for the graded commutative algebra freely generated by a graded vector space $V$; this is sometimes denoted by $\Lambda(V)$, especially in the rational homotopy theory literature, but we will not use that notation.
If $V$ has a chosen (homogeneous) basis $x_1, \dots, x_k$, we will also write $\QQ[x_1, \dots, x_k]$ for $\Symgr(V)$ and $A[x_1, \dotsc, x_k]$ for $A \tensor \Symgr(V)$, where $A$ is any commutative graded algebra.
If $A$ is already a commutative \emph{differential} graded algebra (\cdga{}) and $y_1, \dotsc, y_n \in A$ are cocycles, then $(A[x_1, \dots, x_k], dx_1 = y_1, \dotsc, dx_k = y_k)$ will denote the \cdga{} whose differential extends the differential in $A$ by the added formulas.

We shall write $\APL(-)$ for the functor of piecewise linear differential forms, as constructed in \cite[Section~10]{felix_rational_2001}. We recall that a \cdga{} $(A,d)$ models a space $X$ if there is a zigzag of quasi-isomorphisms between $\APL(X)$ and $(A,d)$.

\subsection{Rational homotopy of configuration spaces}
Fulton and MacPherson, in \cite{fulton_compactification_1994}, first gave a rational model in the sense of Sullivan for the configuration spaces of points on a smooth projective complex variety.
This model was later improved by Kriz \cite{kriz_rational_1994} and Totaro \cite{totaro_configuration_1996} and we recall its construction here.
As before, let $X$ be smooth projective of complex dimension $n$.

\begin{construction}\label{construction:totaro-model}
Let $r \geq 1$ be a natural number. For integers $1 \leq a,b \leq r$, $a\neq b$, denote by $\pi_a \colon X^r \to X$ and $\pi_{ab} \colon X^r \to X^2$ the obvious projections. Let $C_r(X)$ be the quotient of the graded commutative algebra
\[
    H^*(X^r;\bQ)[G_{ab}] \dpunct,
\]
where the $G_{ab}$ are generators in degree $2n-1$ for $1 \leq a,b \leq r$, $a \neq b$, modulo the following relations:
\begin{align*}
    G_{ab} &= G_{ba} \\
    (G_{ab})^2 &= 0 \qquad \text{(automatic from graded commutativity)}\\
    G_{ab}G_{ac} + G_{bc}G_{ba} + G_{ca}G_{cb} &= 0 \qquad \text{for $a,b,c$ distinct} \\
    \pi_a^*(x)G_{ab} &= \pi_b^*(x)G_{ab} \qquad \text{for } a\neq b, x \in H^*(X;\bQ).
\end{align*}
Define a differential $d$ on $C_r(X)$ by
\[
    d(G_{ab}) = \pi_{ab}^*(\Delta)
\]
where $\Delta \in H^{2n}(X^2;\bQ)$ is the class of the diagonal. Then the \cdga{} $(C_r(X), d)$ is a rational model for the configuration space $\F^r(X)$.
\end{construction}

\subsection{Rational homotopy of section spaces}
\label{sec:recollections-sectionspace}

We recall the results from \cite[Section 8.1]{aumonier_h-principle_2022} concerning the rational cohomology of the continuous section space $\Gammacon(J^1(\cL) -0)$. The rational homotopy equivalences
\[
    \Gammacon(J^1(\cL) -0) \simeq_\bQ \map(X, K(\bQ,2n+1)) \simeq \prod_{i=0}^{2n+1} K(H^{2n+1-i}(X;\bQ), i)
\]
imply that
\[
    H^*(\Gammacon(J^1(\cL) -0); \bQ) \cong \Symgr\bigg( \bigoplus_i H^{2n+1-i}(X;\bQ)^\vee [-(2n+1)]\bigg)
\]
where $H^{*}(X;\bQ)^\vee = \Hom(H^{*}(X;\bQ), \bQ)$ denotes the dual vector space (in cohomological degree $-*$), and $[k]$ indicates a cohomological degree shift ($A[k]^i = A^{i+k}$). Using that homology is linearly dual to cohomology and Poincaré duality, we have isomorphisms:
\[
    H^{2n+1-*}(X)^\vee \cong H_{2n+1-*}(X) \cong H^{*-1}(X)
\]
which can be used to rewrite
\begin{equation}\label{eqn:stablecohomology-free-gca}
    H^*(\Gammacon(J^1(\cL) -0); \bQ) \cong \Symgr\bigg( H^*(X;\bQ) [-1]\bigg).    
\end{equation}
We will need to understand the morphism induced in cohomology by the evaluation map
\[
    \mathrm{ev} \colon \map(X, K(\bQ,2n+1)) \times X \lra K(\bQ,2n+1).
\]
This is explained by Haefliger in \cite[Section~1.2]{haefliger_rational_1982} and we transcribe here his words in our notation. Let $\{b_j\}$ be a homogeneous basis of the graded vector space $H^*(X;\bQ)$.
Let $\{b_j^\vee\}$ be the dual basis under the Poincaré pairing, so that $\abs*{b_j^\vee} = 2n - \abs*{b_j}$ and $b_i \smile b_j^\vee = \delta_{ij} [X]$ if $\abs*{b_i} = \abs*{b_j}$. Here $\delta_{ij}$ is the Kronecker delta, and $[X] \in H^{2n}(X;\bQ)$ denotes the fundamental cohomology class of $X$.
Under the isomorphism~\eqref{eqn:stablecohomology-free-gca}, each $b_j$ corresponds to a class of degree $\abs*{b_j} + 1$ in the cohomology of $\Gammacon(J^1(\cL) -0)$ which we denote by $sb_j$ (the shifted class).
\begin{lemma}\label{lemma:pullback-generator-evaluation}
Let $\chi \in H^{2n+1}(K(\bQ,2n+1); \bQ)$ be the canonical generator. The morphism induced in cohomology by the evaluation map
\[
    \Gammacon(J^1(\cL) -0) \times X \simeq_\bQ \map(X, K(\bQ,2n+1)) \times X \tolabel{\ev} K(\bQ,2n+1)
\]
sends $\chi$ to
\[
    \sum_j sb_{j} \otimes b_j^\vee \in H^*(\Gammacon(J^1(\cL) -0); \bQ) \otimes H^*(X;\bQ).
\]
\end{lemma}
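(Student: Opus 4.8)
The plan is to use that $Z \defeq \map(X, K(\bQ, 2n+1))$ is, after choosing a topological abelian group model for $K(\bQ, 2n+1)$, itself a topological abelian group under pointwise addition, and to pin down $\ev^*\chi$ from the compatibility of $\ev$ with this group structure together with a handful of ``test maps'' out of spheres. Since $2n+1 > 2n = \dim_\bR X$ we have $H^{2n+1}(X; \bQ) = 0$, hence $\pi_0(Z) = [X, K(\bQ, 2n+1)] = 0$ and $Z$ is connected; moreover the rational $H$-space $Z$ splits \emph{as an $H$-space} as the product $\prod_{i=0}^{2n+1} K(H^{2n+1-i}(X;\bQ), i)$ of rational Eilenberg--MacLane spaces, so the generators $sb_j$ of \eqref{eqn:stablecohomology-free-gca} may be taken to be pulled back from the (primitive) fundamental classes of the factors. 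Consequently $H^*(Z; \bQ)$ is a connected graded Hopf algebra, free as an algebra on $\bigoplus_j \bQ\gen*{sb_j}$, with submodule of primitives exactly $\bigoplus_j \bQ\gen*{sb_j}$; and $\chi \in H^{2n+1}(K(\bQ, 2n+1); \bQ)$ is primitive.

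\emph{Step 1 (the shape of $\ev^*\chi$).} For fixed $x \in X$ the map $\ev(-, x) \colon Z \to K(\bQ, 2n+1)$ is a homomorphism of $H$-spaces, since $(f+g)(x) = f(x) + g(x)$; more precisely, writing $\mu_Z$, $\mu_K$ for the group multiplications, the maps $\ev \circ (\mu_Z \times \id_X)$ and $\mu_K \circ (\ev \times \ev) \circ \delta$ from $Z \times Z \times X$ to $K(\bQ, 2n+1)$ agree, where $\delta(f, g, x) = ((f, x), (g, x))$. Applying $(-)^*\chi$, using $\mu_K^*\chi = \chi \otimes 1 + 1 \otimes \chi$, and comparing Künneth components in $H^*(Z) \otimes H^*(Z) \otimes H^*(X)$ shows that, writing $\ev^*\chi = \sum_\ell u_\ell \otimes v_\ell$ with $\{v_\ell\}$ a basis of $H^*(X; \bQ)$, each $u_\ell$ satisfies $\mu_Z^*(u_\ell) = u_\ell \otimes 1 + 1 \otimes u_\ell$, i.e.\ is primitive. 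Hence $u_\ell \in \bigoplus_j \bQ\gen*{sb_j}$ and
\[
    \ev^*\chi = \sum_j sb_j \otimes c_j, \qquad c_j \in H^{2n - \abs*{b_j}}(X; \bQ),
\]
for unique $c_j$ (there is no $1 \otimes (-)$ term because $H^{2n+1}(X;\bQ) = 0$, and the term $s[X] \otimes 1$, when nonzero, is of this form with $b_j = [X]$).

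\emph{Step 2 (identifying $c_j = b_j^\vee$).} Fix $q \ge 1$. Under $\pi_q(Z) \cong H^{2n+1-q}(X; \bQ)$, a class $\alpha$ corresponds to a map $f_\alpha \colon S^q \to Z$ whose adjoint $\ev \circ (f_\alpha \times \id_X) \colon S^q \times X \to K(\bQ, 2n+1)$ represents $\sigma_q \times \alpha$, where $\sigma_q$ generates $H^q(S^q; \bQ)$; therefore $(f_\alpha \times \id_X)^* \ev^*\chi = \sigma_q \times \alpha$. On the other hand $f_\alpha^*$ kills every generator $sb_j$ with $\abs*{sb_j} \neq q$ and every decomposable class (as $H^{>0}(S^q; \bQ)$ sits in degree $q$ only), while for $b_j \in H^{q-1}(X)$ the definition of $sb_j$ through Poincaré duality preceding \eqref{eqn:stablecohomology-free-gca} gives $f_\alpha^*(sb_j) = \bigl(\int_X b_j \smile \alpha\bigr)\, \sigma_q$. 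Comparing the two expressions for $(f_\alpha \times \id_X)^* \ev^*\chi$ yields
\[
    \sum_{b_j \in H^{q-1}(X)} \Bigl(\int_X b_j \smile \alpha\Bigr)\, c_j = \alpha \qquad \text{for all } \alpha \in H^{2n+1-q}(X; \bQ)
\]
and every $q$, which is exactly the statement that $\{c_j\}$ is the Poincaré-dual basis of $\{b_j\}$. Thus $c_j = b_j^\vee$ and $\ev^*\chi = \sum_j sb_j \otimes b_j^\vee$.

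The crux is \emph{Step 1}: ruling out decomposable contributions to $\ev^*\chi$, which is where the $H$-space structure of $Z$ and the description of $H^*(Z; \bQ)$ as a free Hopf algebra on the primitives $sb_j$ are essential. The rest is Poincaré-duality bookkeeping --- the one genuinely delicate point being the sign conventions that fix the dual basis $b_j^\vee$ and the identification of $\pi_*(Z)$, which must be tracked to ensure no stray signs appear. An alternative route, sidestepping the Hopf-algebra discussion, is to compute directly inside the Haefliger--Brown--Szczarba Sullivan model of $\map(X, K(\bQ, 2n+1))$ (as in \cite{haefliger_rational_1982}), where the universal evaluation cocycle is explicit; in that approach Step 1 is automatic and only the bookkeeping of Step 2 remains.
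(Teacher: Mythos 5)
Your argument is correct, but it takes a genuinely different route from the paper's. The paper simply quotes Haefliger's description of the cohomology of maps into mapping spaces: for $f\colon Z\times X\to K(\bQ,2n+1)$ with adjoint $g$, the components $g_i^*$ are computed by $a\mapsto a\cap f^*(\chi)$, and taking $f=\ev$, $g=\id$ immediately forces $\ev^*\chi=\sum_j sb_j\otimes b_j^\vee$. You instead reprove this special case of Haefliger's formula from scratch: the topological-abelian-group structure on $\map(X,K(\bQ,2n+1))$ makes $\ev$ additive in the mapping-space variable, so the $H^*(X)$-components of $\ev^*\chi$ are primitive and hence (free graded Hopf algebra on primitives, characteristic zero) lie in the span of the generators $sb_j$; then testing against spheres via $\pi_q\bigl(\map(X,K(\bQ,2n+1))\bigr)\cong H^{2n+1-q}(X;\bQ)$ and the adjunction pins the coefficients down to the Poincaré-dual basis. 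What your approach buys is self-containedness — no appeal to Haefliger's theorem, only the H-space structure and the adjunction defining the homotopy groups of the mapping space — at the cost of two points you must (and essentially do) address: that the paper's generators $sb_j$ from \eqref{eqn:stablecohomology-free-gca} may be taken primitive, i.e. that the rational splitting into Eilenberg--MacLane factors can be chosen as an H-map (true, e.g. for a simplicial $\bQ$-vector-space model of $K(\bQ,2n+1)$), and that your pairing normalization $f_\alpha^*(sb_j)=\bigl(\int_X b_j\smile\alpha\bigr)\sigma_q$ matches the paper's definition of $sb_j$ through Poincaré duality — both settled at the same level of rigour (up to the sign conventions you flag) as the paper's own transcription of Haefliger. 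Your closing remark that one could instead compute in the Haefliger model is in fact essentially what the paper does.
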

\begin{proof}
To avoid cluttering the argument with too much notation, we first explain the general case of a map $f \colon Z \times X \to K(\bQ,2n+1)$ by closely following Haefliger's argument. (We will later take $f$ to be the evaluation map.) By adjunction, $f$ is the same datum as a map $g \colon Z \to \map(X, K(\bQ,2n+1))$. Let $g_i$ be the composition of $g$ with the projection onto the $i$-th factor:
\[
    Z \lra \map(X,K(\bQ,2n+1)) \simeq \prod_{j=0}^{2n+1} K(H^{2n+1-j}(X;\bQ), j) \lra K(H^{2n+1-i}(X;\bQ), i).
\]
Haefliger then explains that the morphism induced in cohomology is given by
\begin{align*}
    g_i^* \colon H^i( K(H^{2n+1-i}(X;\bQ), i); \bQ) \cong H^{2n+1-i}(X;\bQ)^\vee &\lra H^i(Z;\bQ) \\
    a &\longmapsto a \cap f^*(\chi)
\end{align*}
where $a \cap (z \otimes x) = a(x) z$ for $a \in H^*(X)^\vee$, $x \in H^*(X)$ and $z \in H^*(Z)$. We now take $f$ to be the evaluation map, hence $g$ to be the identity. Decomposing in the chosen bases, we may a priori write
\[
    \mathrm{ev}^*(\chi) = \sum_{\abs*{b_j} = \abs*{b_k}} \lambda_{jk} \cdot (sb_j) \otimes b_k^\vee
\]
for some constants $\lambda_{jk} \in \bQ$. Using that $g$ is the identity, we have for all $a \in H^{2n + 1 - i}(X)^\vee \cong H^{i - 1}(X)$:
\[
    a \cap \mathrm{ev}^*(\chi) = sa \dpunct.
\]
Varying $a$ through $\{b_j\}$ finishes the proof.
\end{proof}

\subsection{A rational model for the continuous section space with marked points}

We are now ready to construct a \cdga{} which, we will shortly show, computes the rational cohomology of the space $\Z^r_{\cC^0}(\cL)$.
\begin{construction}\label{rational-model}
Let $r \geq 1$ be a natural number. We define $A_r(\cL)$ to be the commutative graded algebra
\begin{equation}
    A_r(\cL) = C_r(X) \tensor \Symgr\big( H^*(X; \QQ)[-1] \dsum \QQ\gen*{\alpha_1, \dotsc, \alpha_r} \dsum \QQ\gen*{\eta_1, \dotsc, \eta_r}\big) \dpunct,
\end{equation}
where each $\alpha_i$ is in degree $2n - 1$, each $\eta_i$ is in degree $2n$, and $C_r(X)$ is the rational model of $\F^r(X)$ recalled in \cref{construction:totaro-model}.
Corresponding to the projections $\pi_i \from \F^r(X) \to X$, let $\pi_i^* \from H^*(X) \to H^*(X)^{\tensor r} \to C_r(X)$ be the $i$th inclusion. 
Define
\[
    \epsilon_i \defeq \sum_j \pi_i^*(b_j^\vee) \otimes sb_j \in C_r(X) \otimes \Symgr\big(H^{*}(X; \QQ)[-1] \big).
\]
which is an element in degree $2n + 1$. In fact, by \cref{lemma:pullback-generator-evaluation}, it is the class represented by the composite 
\[
    \Gammacon(J^1(\cL) -0) \times \F^r(X) \tolabel{\id \times \pi_i} \Gammacon(J^1(\cL) -0) \times X \simeq_\bQ \map(X, K(\bQ,2n+1)) \times X \tolabel{\ev} K(\bQ,2n+1).
\]
We define a differential on $A_r(\cL)$ by the tensor product of the differential on $C_r(X)$ and the differential on the second tensor factor given by 
\begin{align*}
    d(\alpha_i) &= \pi_i^*(e(\Omega^1(\cL))) \dpunct,\\
    d(\eta_i) &= \epsilon_i - \pi_i^*(c_1(\cL))\alpha_i
\end{align*}
and sending the other generators to $0$.
\end{construction}

\begin{remark}
We can define a $\fS_r$ action on $A_r(\cL)$ by acting on $C_r(X)$ by permuting coordinates of $X^r$, trivially on $H^{*}(X)[-1]$ and by permuting the $\alpha_i$ and $\eta_i$. It is clear that the differential defined above is $\fS_r$-equivariant.
\end{remark}

We will see below that for $\cL$ sufficiently ample, the \cdga{} $A_r(\cL)$ varies in a fixed isomorphism class if $c_1(\cL)$ is multiplied by a scalar (eg if $\cL$ is replaced by $\cL^{\tensor d}$ for large $d$).
In cases relevant to our results this class in $H^2(X)$ will lie in the \emph{ample cone} but the construction of $A_r(\cL)$ does not depend on this property.
Therefore we make the following definition:

\begin{construction}\label{stable-cdga}
Fix $c \in H^2(X)$ and $A_r(X, c)$ be the \cdga{} that as a graded algebra is the same as \cref{rational-model}:
\[A_r(X, c) = C_r(X) \tensor \Symgr\big( H^*(X; \QQ)[-1] \dsum \QQ\gen*{\alpha_1, \dotsc, \alpha_r, \eta_1, \dotsc, \eta_r}\big)\]
but with differential given by
\begin{align*}
d \alpha_i &= \pi_i^*([X]) \dpunct,\\
d \eta_i &= \epsilon_i - c \alpha_i
\end{align*}
and vanishing on the generators in $H^{*}(X)[-1]$, with the same notation as in \cref{rational-model}.
As shown in \cref{rational-model-stabilizes}, the \cdga{} $A_r(X, c)$ remains in the same isomorphism class if $c$ is replaced by a non-zero multiple, so we will allow $c \in \PP H^2$ with the same notation if we only care about $A_r(X, c)$ up to isomorphism, which will often be the case.
\end{construction}

\begin{remark}
Tracing through the definitions, it is not hard to verify that $A_r(X, c)$ only depends on $r$, the Poincaré duality algebra $H = H^*(X)$ and $c \in H^2$ (or $c \in \PP H^2$, up to isomorphism). 
Therefore one could define an analogous \cdga{} $A_r(H, c)$ in this broader generality.
We shall not do this.
\end{remark}

\begin{theorem}\label{serre-ss-degeneration}
The commutative differential graded algebra $A_r(\cL)$ of \cref{rational-model} is a rational model of $\Z^r_{\cC^0}(\cL)$. In particular, there is an $\fS_r$-equivariant isomorphism
\[ H^*(\Z^r_{\cC^0}(\cL); \bQ) \isom H^*(A_r(\cL)) \dpunct.\]
\end{theorem}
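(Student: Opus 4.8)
The plan is to realize $\Z^r_{\cC^0}(\cL)$ as the total space of a fibration over $\F^r(X)$ whose fiber is (rationally) a product of the punctured jet space and a section space, and to assemble the model $A_r(\cL)$ from the Kriz--Totaro model $C_r(X)$ of the base together with models of the fiber, using that the relevant Serre spectral sequence is formal enough to be captured by a \cdga{} with the prescribed differentials. Concretely, first I would recall from \cref{continuous-evaluation-is-fiber-bundle} that $\Z^r_{\cC^0}(\cL) \to (\Omega^1(\cL)-0)^r|_{\F^r(X)}$ is a fiber bundle with fiber $U_{\cC^0}(\vec v)$, the space of non-vanishing continuous sections of $J^1(\cL)$ with prescribed values at the $r$ marked points; since $J^1(\cL)-0$ has fiber $\bC^{n+1}-0 \simeq_\bQ S^{2n+1}$, this section space is rationally a mapping space into $K(\bQ, 2n+1)$ with $r$ point constraints, and standard obstruction theory identifies it (rationally, fiberwise over $\F^r(X)$) with $\map(X, K(\bQ,2n+1))$ cut down by the constraints. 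I would then observe that $(\Omega^1(\cL)-0)^r|_{\F^r(X)} \to \F^r(X)$ is a bundle with fiber $(\bC^n-0)^r \simeq_\bQ (S^{2n-1})^r$, contributing the generators $\alpha_i$ in degree $2n-1$ with $d\alpha_i = \pi_i^* e(\Omega^1(\cL))$ as the transgression/Euler-class relation.

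Next I would build the model by a sequence of \cdga{} fibration extensions (Hirsch extensions / Grivel--Halperin--Thomas construction of the Serre spectral sequence at the level of \cdga{}s, as acknowledged in the introduction). Starting from $C_r(X)$, a model for $\F^r(X)$, adjoin the $\alpha_i$ with the stated differential to model $(\Omega^1(\cL)-0)^r|_{\F^r(X)}$; the key point here is that the only nontrivial attaching data is the fiberwise Euler class, pulled back along $\pi_i$, because each $S^{2n-1}$-bundle is classified by its Euler class rationally. Then adjoin the $sb_j \in H^*(X)[-1]$ and the $\eta_i$ to incorporate the section space: the $sb_j$ are the free polynomial generators of $H^*(\Gamma_{\cC^0}(J^1(\cL)-0))$ from \eqref{eqn:stablecohomology-free-gca}, carrying zero differential, while each $\eta_i$ records the constraint "the evaluation at $x_i$ is forced to land in $\Omega^1(\cL)_{x_i}$'', which rationally means the $(2n+1)$-class $\epsilon_i$ (the pullback of the fundamental class $\chi$ under the $i$-th evaluation, computed by \cref{lemma:pullback-generator-evaluation}) becomes exact after we also allow the contribution $\pi_i^*(c_1(\cL))\alpha_i$ coming from the difference between $J^1(\cL)-0$ and $\Omega^1(\cL)-0$ in the short exact sequence \eqref{eqn:SES-jet-bundle}. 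This yields precisely $d\eta_i = \epsilon_i - \pi_i^*(c_1(\cL))\alpha_i$, matching \cref{rational-model}.

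Having produced a \cdga{} of the correct shape, I would check quasi-isomorphism with $\APL(\Z^r_{\cC^0}(\cL))$ by comparing the associated spectral sequences of the tower of fibrations $\Z^r_{\cC^0}(\cL) \to (\Omega^1(\cL)-0)^r|_{\F^r(X)} \to \F^r(X)$ with the algebraic tower of Hirsch extensions: at each stage the base is already modelled (by induction, with $C_r(X)$ as the base case from \cref{construction:totaro-model}), the fiber is a rational $H$-space or sphere whose minimal model we know, and the only freedom in the extension is the set of transgressions, which we have identified geometrically (Euler class for the $\alpha_i$, evaluation-of-fundamental-class for the $\eta_i$). Finally I would note that every map and identification used --- the bundle projections, the relevant characteristic classes, and the evaluation map --- is $\fS_r$-equivariant for the coordinate-permutation action, since the $r$ marked points play symmetric roles, so the isomorphism $H^*(\Z^r_{\cC^0}(\cL);\bQ) \isom H^*(A_r(\cL))$ respects the $\fS_r$-actions described in the remark after \cref{rational-model}.

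The main obstacle I anticipate is justifying that the Serre spectral sequence of $\Z^r_{\cC^0}(\cL) \to \F^r(X)$ can be modelled by a \emph{single} \cdga{} with the simple tensor-product-plus-quadratic-differential structure of $A_r(\cL)$ --- i.e.\ that there are no higher or "correction'' differentials beyond the stated $d\alpha_i$ and $d\eta_i$. This requires either invoking a clean naturality statement for the Grivel--Halperin--Thomas model of the Serre spectral sequence (so that the model of the total space is literally the relative model built from the base model $C_r(X)$ and the fiber generators, with differential determined by the fibration's characteristic classes), or checking by hand that the local system $\cH^*(\text{fiber})$ over $\F^r(X)$ is "untwisted enough'' --- which holds because the fiber's rational cohomology is generated by pullbacks along $\pi_i$ of classes on $X$ together with the spherical classes $\alpha_i, \eta_i$ whose monodromy is captured by the Euler class and hence already recorded in $C_r(X)[\alpha_i]$. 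Handling the constraints at the marked points (the passage from free sections to sections with prescribed $1$-jets) cleanly, without introducing spurious generators, is the delicate bookkeeping step; the formula for $\epsilon_i$ from \cref{lemma:pullback-generator-evaluation} is exactly what makes this work out.
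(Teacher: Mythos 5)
Your outline assembles the right ingredients (the Kriz--Totaro model $C_r(X)$, the Euler-class differential for the $\alpha_i$, Haefliger's computation of the evaluation map from \cref{lemma:pullback-generator-evaluation}), and the differential you write down is the correct one, but the decisive step is missing. You propose to model the tower $\Z^r_{\cC^0}(\cL) \to (\Omega^1(\cL)-0)^r|_{\F^r(X)} \to \F^r(X)$ by Hirsch extensions and then argue that ``the only freedom in the extension is the set of transgressions.'' That is not true as stated: in the relative model for the second stage the differential $d\eta_i = \epsilon_i - \pi_i^*(c_1(\cL))\alpha_i$ is \emph{not} a transgression, since $\epsilon_i = \sum_j \pi_i^*(b_j^\vee)\,sb_j$ mixes base classes with the fiber generators $sb_j$; and triviality of the monodromy (which your ``untwisted enough'' remark gestures at, with an inaccurate description of where the classes $sb_j$ live --- they come from the section space, not from pullbacks along $\pi_i$) only controls the $E_2$-page of the Serre spectral sequence, not the higher extension data that pins down the relative Sullivan model. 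So the very point you flag as the ``main obstacle'' is exactly the gap, and neither of your two suggested remedies closes it. Likewise, the appearance of $\pi_i^*(c_1(\cL))\alpha_i$ cannot simply be attributed to the short exact sequence \eqref{eqn:SES-jet-bundle}: it rests on the identity $\iota^*(b) = c_1(\cL)\cdot a$ in $H^{2n+1}(\Omega^1(\cL)-0)$, which requires an argument (in the paper, a Gysin/push--pull computation using that the normal bundle of $\Omega^1(\cL)-0 \subset J^1(\cL)-0$ is $\cL$; see \cref{lemma:pullback-generator}).

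The paper avoids the ``no higher corrections'' problem by not working with the tower over $\F^r(X)$ at all. It uses the pullback square \eqref{main-pullback}: since $\ev$ is a genuine fibration (\cref{continuous-evaluation-is-fiber-bundle}), the Eilenberg--Moore theorem expresses $\APL(\Z^r_{\cC^0}(\cL))$ as a derived tensor product, and the inclusion $(\Omega^1(\cL)-0)^r|_{\F^r(X)} \hookrightarrow (J^1(\cL)-0)^r|_{\F^r(X)}$ is replaced by an explicit relative Sullivan (hence cofibrant) model $\APL(\F^r(X))[x_i] \hookrightarrow \APL(\F^r(X))[x_i,y_i,z_i]$ extracted from the Moore--Postnikov tower of the fiberwise rationalization, with $k$-invariants the Euler class and $p_1^*(b)-p_2^*(c_1(\cL)\cdot a)$ (\cref{lemma:moore-postnikov-tower-k1,lemma:moore-postnikov-tower-k2,lemma:model-inclusion}). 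The derived tensor product is then an honest pushout, and the differential on the adjoined generators is forced --- no spectral-sequence degeneration or naturality-of-GHT argument is needed. If you want to salvage your route, the cleanest repair is the same one: observe that $\pi_{\cC^0}$ is pulled back from $\ev$ along $\iota$ and base-change a cofibrant model of $\iota$, which is in substance the paper's proof rather than a genuinely independent one.
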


We will prove this theorem in the next section. But first, we collect a few computational lemmas and direct consequences.

\begin{proposition}\label{euler-class-non-vanishing}
Let $\cL$ be an ample line bundle. Then there exists a $d_0 \geq 1$ such that $e(\Omega^1(\cL^d)) \neq 0$ for all $d \geq d_0$.
\end{proposition}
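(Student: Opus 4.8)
The class $e(\Omega^1(\cL^{\tensor d})) = e(\Omega^1_X \tensor \cL^{\tensor d}) \in H^{2n}(X; \QQ)$ is the top Chern class of a rank-$n$ bundle on the $n$-dimensional variety $X$. Since $H^{2n}(X;\QQ) \cong \QQ$ via the fundamental class, non-vanishing of this class is equivalent to the non-vanishing of the single number obtained by integrating it over $X$, i.e. $\int_X c_n(\Omega^1_X \tensor \cL^{\tensor d}) \ne 0$. The plan is to expand this number as a polynomial in $d$ and show the leading term is non-zero for $d \gg 0$.

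\begin{proof}
Write $E = \Omega^1_X$, a holomorphic vector bundle of rank $n$. By the standard formula for Chern classes of a twist (see e.g.\ \cite[Remark~3.2.3]{fulton_compactification_1994} or any reference on Chern classes), we have
\[
c_n\big(E \tensor \cL^{\tensor d}\big) = \sum_{j=0}^{n} c_j(E)\, \big(d\, c_1(\cL)\big)^{n-j} \binom{n-j}{n-j} = \sum_{j=0}^{n} d^{\,n-j}\, c_j(E)\, c_1(\cL)^{n-j}
\]
in $H^{2n}(X;\QQ)$. (More precisely, if the Chern roots of $E$ are $r_1, \dotsc, r_n$ and $\ell = c_1(\cL)$, then $c_n(E \tensor \cL^{\tensor d}) = \prod_{i=1}^n (r_i + d\ell)$, whose expansion is $\sum_{j} d^{n-j} \ell^{n-j} e_j(r_1, \dots, r_n) = \sum_j d^{n-j}\ell^{n-j} c_j(E)$.) Pairing with the fundamental class, we obtain a polynomial in $d$:
\[
P(d) \defeq \int_X c_n\big(\Omega^1_X \tensor \cL^{\tensor d}\big) = \sum_{j=0}^{n} \left(\int_X c_j(\Omega^1_X)\, c_1(\cL)^{n-j}\right) d^{\,n-j} \dpunct.
\]
The leading coefficient (coefficient of $d^n$) is $\int_X c_1(\cL)^n$. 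Since $\cL$ is ample, $c_1(\cL)$ is represented by a Kähler class, so $\int_X c_1(\cL)^n > 0$; in particular this coefficient is non-zero. Therefore $P(d)$ is a non-zero polynomial of degree exactly $n$, and so it has at most $n$ roots. Taking $d_0$ to be any integer strictly larger than all real roots of $P$ (and $\geq 1$), we get $P(d) \ne 0$ for all $d \geq d_0$, hence $e(\Omega^1(\cL^{\tensor d})) \neq 0$ in $H^{2n}(X;\QQ)$ for all such $d$.
\end{proof}

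**Main obstacle.** There is no serious obstacle here; the only point requiring care is the identification of the leading term of $P(d)$ and the observation that ampleness makes it positive (via the fact that a positive power of an ample class integrates to a positive number, e.g.\ by the Nakai--Moishezon criterion or simply because an ample line bundle carries a positively-curved metric). One could alternatively avoid Chern-class bookkeeping entirely and argue more softly: $\Omega^1_X \tensor \cL^{\tensor d}$ is, for $d$ large, globally generated (since $\cL$ is ample and $\Omega^1_X$ is coherent), and a globally generated bundle of rank equal to $\dim X$ on a projective variety has non-vanishing top Chern class provided that class is non-torsion — but pinning down non-vanishing rather than just ``generically zero locus has the expected dimension'' still ultimately comes back to the degree computation above, so the polynomial argument is the cleanest route. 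If one wishes to extract an explicit $d_0$, one bounds the roots of $P$ in terms of $\int_X c_1(\cL)^n$ and the mixed intersection numbers $\int_X c_j(\Omega^1_X) c_1(\cL)^{n-j}$, but the statement as given only asks for existence.
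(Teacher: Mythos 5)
Your argument is correct and is essentially the paper's own proof: expand $c_n(\Omega^1_X \tensor \cL^{\tensor d})$ as $\sum_j d^{n-j} c_j(\Omega^1_X) c_1(\cL)^{n-j}$, pair with the fundamental class to get a degree-$n$ polynomial in $d$ whose leading coefficient $c_1(\cL)^n[X]$ is positive by ampleness (Nakai--Moishezon), and take $d_0$ beyond its finitely many roots. The only quibble is the citation: the twist formula should be attributed to a standard Chern-class reference (e.g.\ Fulton's \emph{Intersection Theory}), not the Fulton--MacPherson compactification paper, but this does not affect correctness.
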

\begin{proof}
For an integer $d$, we compute the Euler class:
\[
    e(\Omega^1(\cL^d)) = c_n(\Omega^1(\cL^d)) = c_n(\Omega^1_X \tensor \cL^d) = \sum_{i=0}^n c_i(\Omega^1_X) c_1(\cL^d)^{n-i} = \sum_{i=0}^n c_i(\Omega^1_X) c_1(\cL)^{n-i} d^{n-i}.
\]
Recall, eg from the Nakai--Moishezon criterion, that ampleness of $\cL$ implies that $c_1(\cL)^n[X] > 0$. In particular
\[
    e(\Omega^1(\cL^d))[X] = (c_1(\cL)^n[X])d^n + o(d^n)
\]
is a polynomial in $d$ of degree $n$. Thus, there exists a $d_0\geq 1$ such that this polynomial does not vanish when evaluated at all $d \geq d_0$.
\end{proof}

\begin{remark}\label{euler-class-bound}
The rational root theorem implies that $d_0 = 1+ \abs*{\chi(X)}$ suffices in the proposition.
For curves the polynomial is $a d - \chi(X)$ with $a = c_1(\cL)[X] \ge 1$, so $d_0 = 3$ is sufficient.
We are not aware of a bound that is uniform in all $X$ of a given dimension $n$ for general $n > 1$.
\end{remark}

We can now state and prove our main stability result:

\begin{proposition}\label{rational-model-stabilizes}
Let $\cL$ be an ample line bundle, and let $d_0$ be as in \cref{euler-class-non-vanishing}. Then for all $d \geq d_0$, the \cdga{} $A_r(\cL^d)$ is isomorphic to $A_r(X, c_1(\cL))$.
Moreover, the \cdga{} $A_r(X, c)$ does not depend, up to isomorphism, on the choice of representative $c$ for $[c] \in \PP H^2(X)$.
\end{proposition}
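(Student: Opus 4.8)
The plan is to prove both assertions by writing down explicit isomorphisms of \cdga{}s that fix $C_r(X)$ and simply rescale the free generators $\alpha_i$, $\eta_i$, and the generators $sb_j$ of the factor $H^*(X;\QQ)[-1]$. I would split the argument in two: (i) the ``moreover'' clause, that $A_r(X,c)\isom A_r(X,\lambda c)$ for every $\lambda\in\QQ^\times$ (two representatives of a point of $\PP H^2(X)$ differ by such a $\lambda$); and (ii) an identification $A_r(\cL^d)\isom A_r(X,\lambda_d\,c_1(\cL))$ for a suitable nonzero scalar $\lambda_d$, after which (i) completes the proof. Essentially nothing is at stake beyond bookkeeping: the only real input is that $e(\Omega^1(\cL^d))$ and the fundamental class $[X]$ are proportional, with nonzero ratio for $d\ge d_0$. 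The one point to watch is that rescaling $\eta_i$ forces a compensating uniform rescaling of \emph{all} the $sb_j$, because $\epsilon_i=\sum_j\pi_i^*(b_j^\vee)\tensor sb_j$ occurs as a summand of $d\eta_i$; this is harmless precisely because the differential kills every $sb_j$.

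For step (i), I observe that the differential of $A_r(X,c)$ is zero on each $sb_j$ and equals $\pi_i^*([X])$ on $\alpha_i$ independently of $c$, while only $d\eta_i=\epsilon_i-\pi_i^*(c)\alpha_i$ involves $c$. So I would take $\psi\colon A_r(X,c)\to A_r(X,\lambda c)$ to be the identity on $C_r(X)$, to fix each $\alpha_i$, and to send $sb_j\mapsto\lambda^{-1}sb_j$ and $\eta_i\mapsto\lambda^{-1}\eta_i$; this is evidently an isomorphism of graded algebras, and since $\psi(\epsilon_i)=\lambda^{-1}\epsilon_i$, compatibility with differentials boils down to
\[
\psi(d\eta_i)=\lambda^{-1}\epsilon_i-\pi_i^*(c)\alpha_i=\lambda^{-1}\bigl(\epsilon_i-\lambda\,\pi_i^*(c)\alpha_i\bigr)=d(\lambda^{-1}\eta_i)=d\psi(\eta_i).
\]
The rescalings being uniform in $i$ and $j$, $\psi$ is moreover $\fS_r$-equivariant, though that is not needed here.

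For step (ii), I would first recall from the proof of \cref{euler-class-non-vanishing} that $e(\Omega^1(\cL^d))=P(d)\,[X]$ in $H^{2n}(X;\QQ)\isom\QQ$, where $P$ is a polynomial of degree $n$ with $P(d)\ne0$ for $d\ge d_0$, that $c_1(\cL^d)=d\,c_1(\cL)$, and that $\epsilon_i$ is literally the same element in every \cdga{} in sight. The only discrepancy between $A_r(\cL^d)$, where $d\alpha_i=P(d)\,\pi_i^*([X])$, and the model $A_r(X,\cdot)$, where $d\alpha_i=\pi_i^*([X])$, is then removed by rescaling $\alpha_i$ on the $A_r(\cL^d)$ side---legitimate because $P(d)\ne0$: the identity on $C_r(X)$ and on the $sb_j$ together with $\alpha_i\mapsto P(d)\,\alpha_i$ and $\eta_i\mapsto\eta_i$ defines an isomorphism $A_r(\cL^d)\isom A_r\bigl(X,\,d\,P(d)\,c_1(\cL)\bigr)$, the differential check being identical in spirit to the one above (the crux being $\pi_i^*(c_1(\cL^d))\cdot P(d)\alpha_i = d\,P(d)\,\pi_i^*(c_1(\cL))\alpha_i$). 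Applying step (i) with $\lambda=d\,P(d)\in\QQ^\times$ then yields $A_r(\cL^d)\isom A_r(X,c_1(\cL))$ for all $d\ge d_0$, as desired. I do not anticipate a genuine obstacle; the only discipline required is to perform each rescaling on the side of the isomorphism where it is permitted---in particular, the $\alpha_i$-rescaling must happen where $d\alpha_i$ is a \emph{nonzero} multiple of $\pi_i^*([X])$, which is exactly what \cref{euler-class-non-vanishing} guarantees.
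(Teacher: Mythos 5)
Your proposal is correct and is essentially the paper's own argument: the paper likewise constructs an explicit isomorphism that fixes $C_r(X)$ and rescales the generators $sb_j$, $\alpha_i$, $\eta_i$ by nonzero scalars built from $m(d)=e(\Omega^1(\cL^{\otimes d}))[X]$ and the proportionality factor of $c$, just in a single rescaling rather than your two-step factorization through $A_r(X,\,d\,P(d)\,c_1(\cL))$. Your bookkeeping --- in particular tracking the factor $d$ coming from $c_1(\cL^{\otimes d})=d\,c_1(\cL)$ in the coefficient of $\alpha_i$ in $d\eta_i$ --- is if anything slightly more careful than the scalars displayed in the paper's proof.
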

\begin{proof}
The Euler class $e(\Omega^1(\cL^d))$ lives in $H^{2n}(X;\bZ) \cong \bZ$. Let us write $m(d) = e(\Omega^1(\cL^d))[X] \in \bZ$ for that number. By assumption, we have $m(d) \neq 0$ for all $d \geq d_0$. 
Fix $c \in H^2 X$ such that $c_1(\cL) = \lambda c$ for some $\lambda \in \QQ^\times$.
We construct an explicit morphism:
\[
A_r(\cL^{d}) \lra A_r(X, c)
\]
given by the identity on the $C_r(X)$ tensor factor and sending the generators according to:
\[
    sb_j \mapsto \lambda^{-1} m(d) \cdot sb_j, \quad \alpha_i \mapsto m(d) \cdot  \alpha_i, \quad \eta_i \mapsto \lambda^{-1}m(d) \cdot  \eta_i \dpunct.
\]
One directly checks that this defines a morphism of \cdga{}s. It is visibly an isomorphism when $m(d) \ne 0$.
\end{proof}

When $r = 1$, we can compute $H^*(A_1(X, c))$ exactly and it is independent of $c$, as in \cref{one-point-stability}.
But for this computation we need a couple of elementary lemmas about \cdga{}s, which we state and prove here for the reader's convenience.
The parity assumptions below are not crucial but are the only cases we need and they simplify both the statements and the proofs.

\begin{lemma}\label{lemma:homological-algebra}
Consider the \cdga{} $(A[x], dx = z)$, where $A$ is a \cdga{}, $x$ has (positive) even degree and $z \in A$ is a cocycle.
Suppose that multiplication by $z$ is ``exact'' on $A$, ie $zA = \set{a \in A}{az = 0}$. 
Then the map $A[x] \to A/zA$ taking $x \mapsto 0$ is a quasi-isomorphism.
\end{lemma}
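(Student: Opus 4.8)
The claim is that $(A[x], dx = z) \to A/zA$, $x \mapsto 0$, is a quasi-isomorphism, under the hypothesis that multiplication by $z$ on $A$ has trivial homology, i.e. $zA = \operatorname{Ann}_A(z)$ (equivalently, the two-term complex $\cdots \xrightarrow{z} A \xrightarrow{z} A \xrightarrow{z} \cdots$ is exact). Since $x$ has even degree, $A[x] = A\otimes \QQ[x]$ is a free $A$-module on $1, x, x^2, \dots$, and the differential is determined by the Leibniz rule together with $dx = z$: explicitly $d(a x^k) = (da)x^k + (-1)^{|a|} k\, a z\, x^{k-1}$. The plan is to filter $A[x]$ by powers of $x$ and run the associated spectral sequence, or equivalently to argue directly by a "Koszul-type" contraction.

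\textbf{Step 1: set up the filtration.} Let $F_p = \bigoplus_{k \le p} A x^k \subset A[x]$; this is an increasing, exhaustive, differential filtration (note $d$ lowers the $x$-degree by one, so it respects $F_p$). The associated graded $\operatorname{gr} A[x]$ in $x$-degree $k$ is a copy of $A$, and the induced differential $d_0$ on the graded object is just the internal differential of $A$ on each piece $A x^k$ (the term $\pm k a z x^{k-1}$ drops the $x$-degree, so it lives in the next page). Hence $E_1$ of this spectral sequence is $\bigoplus_k H^*(A) x^k = H^*(A)[x]$.

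\textbf{Step 2: identify the $d_1$ differential and compute $E_2$.} The next differential $d_1 \colon H^*(A) x^k \to H^*(A) x^{k-1}$ is induced by $a x^k \mapsto (-1)^{|a|} k\, (az) x^{k-1}$, i.e. (up to sign) it is $k$ times multiplication by $[z] \in H^*(A)$ on $H^*(A)$. But the hypothesis $zA = \operatorname{Ann}_A(z)$ says precisely that multiplication by $z$ is exact on the \emph{complex} $A$ — I should be a little careful here: what I actually need is that the cohomology class $[z]$ acts as a trivial operator in an appropriate sense. The cleanest route is to avoid passing to $H^*(A)$ first: instead observe that each column of $\operatorname{gr}$ together with the $d_1$-type maps assembles, for $k \ge 1$, into pieces of the complex $(A, z)$, whose exactness is exactly the hypothesis. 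Concretely, consider the map of complexes $\sigma \colon A[x] \to A/zA$ (killing $x$ and projecting) and build an explicit contracting homotopy on the kernel: the kernel is generated over $A$ by $zA$ in $x$-degree $0$ together with all $A x^k$ for $k \ge 1$. Using exactness of multiplication by $z$, one writes down an $A$-linear homotopy $h$ with $h x^{k} \in A x^{k+1}$ decreasing the "defect", so that $\operatorname{id} - \iota\sigma = dh + hd$ on the kernel. This shows $\sigma$ is a quasi-isomorphism directly.

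\textbf{Step 3: assemble.} Either conclusion — via the spectral sequence (showing $E_2$ is concentrated in $x$-degree $0$ and equals $H^*(A/zA)$, so it collapses and $A[x] \to A/zA$ is a quasi-isomorphism after checking the edge map is the stated one) or via the explicit homotopy — finishes the proof. I would present the explicit homotopy version since it is self-contained, handles the signs transparently, and does not require convergence bookkeeping. The one genuine subtlety — and the step I expect to cost the most care — is Step 2: correctly interpreting the hypothesis "$zA = \operatorname{Ann}_A(z)$" as exactness of the relevant complex and matching signs so that the contracting homotopy on $\ker \sigma$ actually closes up; in particular one must treat the $x^0$ slot (where one only has $zA \subset A$, not all of $A$) separately from the $x^k$, $k \ge 1$, slots. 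Everything else is routine bookkeeping with the Leibniz rule.
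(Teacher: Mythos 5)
Your first route (filtering by powers of $x$ so that the internal differential of $A$ is taken first) cannot be completed from the stated hypothesis, and this is a genuine gap rather than a cosmetic one: on that $E_1$-page $H^*(A)[x]$ the differential is a nonzero multiple of multiplication by the class $[z]$, so you would need $[z]H^*(A) = \Ann_{H^*(A)}([z])$, whereas the hypothesis $zA = \Ann_A(z)$ is a statement about the underlying graded algebra and does not imply its cohomological counterpart. For example, in $A = \Symgr(\QQ\gen{a,b,c})$ with $a,b$ of degree $3$, $c$ of degree $5$, $da = db = 0$ and $dc = ab$, multiplication by $z = a$ is exact on $A$, yet $[b]$ annihilates $[a]$ in $H^*(A)$ without lying in $[a]H^*(A)$. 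You sensed this ("I should be a little careful here") and pivoted, correctly, towards using the chain-level exactness, but the fallback is asserted rather than carried out. The claimed homotopy cannot in general be $A$-linear (producing $b$ with $zb = a$ for $a \in \Ann_A(z)$ requires a $\QQ$-linear choice of splitting, not an $A$-module map), and, more importantly, a contraction of $\ker(\sigma)$ with respect to the ``multiplication by $z$'' part of the differential is not yet a contraction for the full differential: the terms $(da)x^k$ must be handled, either by homological perturbation (the relevant series terminates in each total degree because $\abs{x} > 0$) or by a filtration argument in which the $z$-direction is taken \emph{first}. That organizational step is precisely the content of the proof, and it is the step your sketch leaves out.

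The paper's proof supplies exactly this: it regards $A[x]$ as a double complex whose one differential sends $a x^{k}$ to $\pm k\, az\, x^{k-1}$ and whose other differential is $d_A$, and runs the spectral sequence taking cohomology with respect to the $z$-multiplication differential first. The hypothesis then applies verbatim at the chain level, the $E_1$-page is concentrated in a single row equal to $A/zA$ with differential induced by $d_A$, and the resulting identification is realized by the quotient map $x \mapsto 0$. So your second route is in spirit the same computation, but to turn it into a proof you must either set up this filtration (rather than the one written down in your Step 1) or construct the contracting homotopy of $\ker(\sigma)$ honestly via the perturbation lemma; as written, the decisive step is missing.
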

\begin{proof}
Let $|x| = i$ and consider $A[x]$ as a double complex 
\[(A[x])^{p, q} = A^{p + (i+1)q} x^{-q}\]
(supported on $q \le 0$) with $p$-differential $d_A \colon A^{p + (i + 1)q}x^{-q} \to A^{p + 1 + (i + 1)q} x^{-q}$ and $q$-differential
\[d_z^{p,q} \colon A^{p + (i + 1)q} x^{-q} \to A^{p + (i + 1)q + (i + 1)} x^{-q - 1} = A^{p + (i + 1)(q + 1)}x^{-(q+1)}\]
defined by $d_z^{p,q}(a x^{-q}) = (-1)^{p + q +1}q \cdot az x^{-q - 1}$.
Then we have the spectral sequence
\[E_1^{p,q} = H^q(A[x]^{p, \bullet}, d_z) \implies H^{p + q}(A[x]).\]
But at any $q < 0$, the differential $d_z^{p, q}$ is, up to a non-zero scalar, multiplication by $z$.
So by the assumption on $z$ we have the exactness $\ker d_z^{p, q} = \mathrm{Im} \ d_z^{p, q - 1}$ for $q < 0$ and arbitrary $p$.
Therefore $E_1$ is supported on $q = 0$ with
\[E_1^{p, 0} = A^p/z A^p\]
and differential $d_1$ induced by $d_A$.
So the spectral sequence collapses on $E_2$ and we get the claimed isomorphism.
\end{proof}

In practice and in all our applications, to verify the assumptions of \cref{lemma:homological-algebra} above it is useful to use the following result:

\begin{lemma}\label{lemma:exact-multiplication}
Suppose $A$ is of the form $B[t]$ with $|t| = i$ odd and $z \in \lambda t + B^i$ for some $\lambda \in \bQ^\times$. Then $zA = \set{a \in A}{az = 0}$.
\end{lemma}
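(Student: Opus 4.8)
The plan is to exploit the very concrete structure of $A = B[t]$. Since $|t| = i$ is odd and we work over $\bQ$, we have $t^2 = 0$, so $A = B \oplus Bt$ as a graded $B$-module: every element of $A$ is uniquely of the form $a_0 + a_1 t$ with $a_0, a_1 \in B$. Write $z = b + \lambda t$ with $b \in B^i$; note that $z$ and $b$ both have the odd degree $i$, whence $z^2 = 0$ and $b^2 = 0$.

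First I would dispose of the inclusion $zA \subseteq \{a \in A : az = 0\}$: for homogeneous $c \in A$, moving the trailing $z$ past $c$ by graded commutativity gives $(zc)z = z(cz) = (-1)^{|c|\,|z|}\, z^2 c = 0$, and the general case follows by additivity.

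For the reverse inclusion I would simply expand $az$ in the $B$-basis $\{1, t\}$ of $A$. Using $t^2 = 0$ and the Koszul sign $tb = (-1)^{i^2} bt = -bt$, one obtains
\[
	az = a_0 b + (\lambda a_0 - a_1 b)\, t .
\]
Since the $B$- and $Bt$-components must vanish independently, the condition $az = 0$ is equivalent to $\lambda a_0 = a_1 b$ (the equation $a_0 b = 0$ being automatic, as $b^2 = 0$); this is the one and only point where $\lambda \neq 0$ is used, allowing us to solve $a_0 = \lambda^{-1} a_1 b$. Substituting back, $a = \lambda^{-1} a_1 b + a_1 t = \lambda^{-1} a_1 (b + \lambda t) = \lambda^{-1} a_1 z$, which lies in $zA$ after one further application of graded commutativity. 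I do not anticipate any genuine obstacle here: the only care needed is the bookkeeping of Koszul signs when commuting odd-degree elements, and the parity hypotheses are precisely what force $t^2 = b^2 = z^2 = 0$, keeping the whole computation linear in $t$.
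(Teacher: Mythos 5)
Your proof is correct and follows essentially the same route as the paper's: write $a$ in the $B$-module decomposition $A = B \oplus Bt$, expand $az$ using $t^2 = 0$ and the Koszul sign $tb = -bt$, and read off from the $t$-component that $a$ is a $B$-multiple of $z$ (the paper just normalizes $\lambda = 1$ first and works with homogeneous $a$). The sign bookkeeping and the observation that the $B$-component equation is automatic are both accurate, so there is nothing to fix.
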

\begin{proof}
Reduce to the case $\lambda = 1$ without loss of generality, so $z = t + b$ for some $b \in B^i$.
Since $|z| = i$ is odd, $z^2 = 0$, so $a \in zA \implies az = 0$.
So suppose $za = 0$, and assume $a$ is homogeneous of degree $j$.
Then $a = b_1 t + b_0$ for some $b_0 \in B^j$, $b_1 \in B^{j-i}$.
Now 
\[0 = az = (b_1 t + b_0)(t + b) = b_1 t b + b_0 t + b_0 b = (b_0 - b_1b)t + b_0 b \implies b_0 = b_1b ,\]
since $t^2 = 0$ and $t b = (-1)^{i^2} bt = - bt$.
But now $a = (b_1t + b_1b) = b_1 z \in zA$.
\end{proof}

In combination, the two lemmas say that a \cdga{} $(B[t, x], dt = \dots, dx = t + \dots)$ is quasi-isomorphic to $B$ by the map sending $x \mapsto 0, dx \mapsto 0$.
As promised, we can now compute $H^*(A_1(X, c))$.

\begin{proof}[Proof of \cref{one-point-stability}]\label{one-point-stability-proof}
Note that since $X$ is a compact complex manifold, its reduced cohomology $\widetilde{H}^*(X)$ is Poincaré dual to $H^{* < 2n}(X)$, we will freely use this identification. 
Denote the fundamental cohomology class of $X$ by $[X]$, so that $H^*(X) \isom H^{* < 2n}(X) \dsum \QQ\gen*{[X]}$ as graded vector spaces.

For $r = 1$ and $c = c_1(\cL)$, the \cdga{} $A_1(X, c)$ has the form 
\[A_1(X, c) = H^*(X) \tensor \Symgr\big(H^{*}(X)[-1] \dsum \QQ\gen*{\alpha, \eta}\big) \]
where $|\alpha| = 2n-1$, $|\eta| = 2n$ (and setting $n = \dim_\CC X$) and differential defined by
\[d\alpha = c_n(\Omega^1(\cL)) \dpunct; \qquad d\eta = s[X] + \epsilon - c \alpha \dpunct.\]
with $[X] \in H^{2n}(X)$ the fundamental cohomology class, $s[X] = 1 \tensor s[X]$ its corresponding generator in $\Symgr(H^{*}(X)[-1])$ and $\epsilon$ a class in $\widetilde{H}^*(X) \tensor \Symgr(H^{* < 2n}(X)[-1])$, see \cref{stable-cdga} for details.

Now, by \cref{lemma:homological-algebra,lemma:exact-multiplication}, 
\[A_1(X, c) \simeq H^*(X) \tensor \Symgr(H^{* < 2n}(X)[-1] \dsum \QQ\gen*{\alpha}) \isom (H^*(X)[\alpha] ) \tensor \Symgr(H^{* < 2n}(X)[-1]) \dpunct,\]
by the map $\eta \mapsto 0$, $s[X] \mapsto c\alpha - \epsilon$.
Note that the final factorization is as \cdga{}s (ie its second factor has vanishing differential) and its first factor $(H^*(X)[\alpha], d\alpha = c_n(\Omega^1(\cL))\,)$ is a model of $\Omega^1(\cL) - 0$.
The claim follows by taking cohomology.
\end{proof}

\subsection{Proof of \texorpdfstring{\cref{serre-ss-degeneration}}{the rational model}}
\label{rational-model-proof}

The proof of \cref{serre-ss-degeneration} relies on recognizing $\Z^r_{\cC^0}(\cL)$ as the following pullback: 
\begin{equation}\label{main-pullback}
\begin{tikzcd}
\Z_{\cC^0}^r(\cL) \arrow[d] \arrow[r]                                & \F^r(X) \times \Gammacon(J^1(\cL) -0) \arrow[d, "\mathrm{ev}"] \\
(\Omega^1(\cL) - 0)^r|_{\F^r(X)} \arrow[r, hook] & (J^1(\cL) - 0)^r|_{\F^r(X)}                       
\end{tikzcd}
\end{equation}
which is also a homotopy pullback, as \cref{continuous-evaluation-is-fiber-bundle} shows that the right-hand vertical arrow is a fibration. Our strategy is then to apply the Eilenberg--Moore theorem:
\begin{theorem}[Eilenberg--Moore]\label{eilenberg-moore-theorem}
Let\nopagebreak
\[%
% https://tikzcd.yichuanshen.de/#N4Igdg9gJgpgziAXAbVABwnAlgFyxMJZARgBoAGAXVJADcBDAGwFcYkQBREAX1PU1z5CKMsWp0mrdgCEefEBmx4CRcqTE0GLNok4ByOfyVDVFcVqm6OAAgA6tvAFt4AfWnWOB7uJhQA5vBEoABmAE4QjkgAzDQ4EEgATLwh4ZGICbHxiMTJIGERSGogcUg58vlpMcVZ5N7cQA
\begin{tikzcd}
E \times_B E' \arrow[d] \arrow[r] & E \arrow[d] \\
E' \arrow[r]                      & B          
\end{tikzcd}
\]
be a pullback square of connected spaces, where $E \to B$ is a fibration with connected fiber $F$. Suppose that the cohomology groups $H^i(F;\bQ)$ are finite dimensional and that the monodromy action of $\pi_1(B)$ on them is trivial. Then the canonical morphism
\[
    \APL(E) \otimes^\bL_{\APL(B)} \APL(E') \lra \APL(E \times_B E')
\]
is a quasi-isomorphism.
\end{theorem}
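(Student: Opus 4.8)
The plan is to prove the statement with the standard machinery of rational homotopy theory: to realize the derived tensor product on the left as an \emph{honest} pushout of commutative differential graded algebras along a relative Sullivan model, and then to identify this pushout, via a recognition theorem for fibrations, with a Sullivan-type model of the homotopy pullback $E \times_B E'$.

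First I would choose a Sullivan model $(\Lambda V, d) \xrightarrow{\simeq} \APL(B)$ and, by the lifting lemma for relative Sullivan algebras, factor the induced map $(\Lambda V, d) \to \APL(E)$ as a relative Sullivan algebra $(\Lambda V, d) \hookrightarrow (\Lambda V \otimes \Lambda W, D)$ followed by a quasi-isomorphism $(\Lambda V \otimes \Lambda W, D) \xrightarrow{\simeq} \APL(E)$; likewise choose a \cdga{} model $(A', d') \xrightarrow{\simeq} \APL(E')$ under $(\Lambda V, d)$. Since a relative Sullivan algebra is semifree, hence cofibrant, as a module over its base, the honest pushout $P \defeq (A', d') \otimes_{(\Lambda V, d)} (\Lambda V \otimes \Lambda W, D) \cong (A' \otimes \Lambda W, D')$ represents the homotopy pushout of $\APL(E') \leftarrow \APL(B) \to \APL(E)$, i.e.\ the derived tensor product $\APL(E) \otimes^{\bL}_{\APL(B)} \APL(E')$; all the choices involved are compatible up to quasi-isomorphism because $\APL$ and the derived tensor product are homotopy invariant. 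Applying $\APL$ to the pullback square of spaces and using the universal property of the pushout produces the canonical comparison map $P \to \APL(E \times_B E')$, and it remains to show this map is a quasi-isomorphism.

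For this I would invoke the recognition theorem for rational fibrations \cite[\S15]{felix_rational_2001}: since $H^i(F;\bQ)$ is finite-dimensional in each degree and $\pi_1(B)$ acts trivially on it, the fiber $(\Lambda W, \bar D) \defeq \bQ \otimes_{(\Lambda V, d)} (\Lambda V \otimes \Lambda W, D)$ of our relative Sullivan model is a Sullivan model of $F$. Now the pullback fibration $E \times_B E' \to E'$ has the same fiber $F$, its monodromy $\pi_1(E') \to \pi_1(B) \to \Aut(H^*(F;\bQ))$ is still trivial, and $H^*(F;\bQ)$ is still of finite type, so the recognition theorem applies again: $(A' \otimes \Lambda W, D')$ is a relative Sullivan algebra over the model $(A', d')$ of $E'$ whose fiber models $F$, hence its cohomology computes $H^*(E \times_B E'; \bQ)$. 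The comparison map $P \to \APL(E \times_B E')$ is a map of such relative models over the quasi-isomorphism $(A', d') \xrightarrow{\simeq} \APL(E')$ which restricts to a quasi-isomorphism on fibers; filtering both sides by base degree yields a map of Eilenberg--Moore spectral sequences, first-quadrant and convergent by the finite-type hypothesis, that is an isomorphism on $E_2 = \Tor_{H^*(B;\bQ)}\bigl(H^*(E;\bQ), H^*(E';\bQ)\bigr)$ and therefore on the abutments. This establishes the quasi-isomorphism.

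The one genuinely substantial ingredient is the recognition theorem: that the fiber of a relative Sullivan model of $\APL(E \to B)$ really models $F$ rationally. This is exactly where all the hypotheses (connectedness, finite-dimensionality of $H^i(F;\bQ)$, trivial monodromy) are consumed; everything else — the cofibrant-replacement bookkeeping, homotopy invariance of $\APL$ and of $\otimes^{\bL}$, and the spectral sequence comparison — is routine. In a self-contained account one would either import this from \cite[\S15]{felix_rational_2001} or reprove it by induction along a Postnikov (or CW) filtration of $B$ using the Serre spectral sequence; I would simply cite it. The only secondary subtlety is the convergence of the Eilenberg--Moore spectral sequence, which again rests on the finite-type assumption.
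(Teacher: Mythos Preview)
The paper does not actually prove this theorem: its entire proof is the single sentence ``For a proof see \cite[Theorem~7.14]{mccleary_users_2001}.'' So there is no argument to compare against beyond the choice of reference.

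Your sketch is a correct and self-contained alternative, essentially the Sullivan-model proof found in \cite[\S15]{felix_rational_2001} (see in particular Proposition~15.8 there, which is exactly the statement that pushing out a relative Sullivan model of $E\to B$ along $\APL(B)\to\APL(E')$ gives a model of $E\times_B E'$). This is a genuinely different route from McCleary's, which proceeds via the cosimplicial cobar construction and the classical convergence theorem for the Eilenberg--Moore spectral sequence. Your approach has the advantage of staying entirely inside the category of \cdga{}s and relative Sullivan algebras, which meshes well with how the result is actually used later in the paper; McCleary's has the advantage of being more general (it does not require $\QQ$-coefficients or $\APL$).

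One small imprecision: in your final paragraph you filter ``by base degree'' and then identify the resulting spectral sequence as the Eilenberg--Moore spectral sequence with $E_2=\Tor_{H^*(B)}(H^*(E),H^*(E'))$. The filtration by base (or rather by word length in $\Lambda W$) gives a Serre-type spectral sequence with $E_2\cong H^*(E')\otimes H^*(F)$, not $\Tor$; the $\Tor$ description comes from the bar filtration instead. Either filtration works for the comparison argument, since in both cases the map is an isomorphism on $E_2$, but you should pick one and name it consistently. This is cosmetic and does not affect correctness.
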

\begin{proof}
For a proof see \cite[Theorem~7.14]{mccleary_users_2001}.
\end{proof}

In order to explicitly compute the derived tensor product appearing in \cref{eilenberg-moore-theorem}, we will need to model the bottom arrow of \cref{main-pullback} as a cofibration between \cdga{}s. More precisely, we will model it as a relative Sullivan algebra \cite[Section~14]{felix_rational_2001}, which we shall derive from its Moore--Postnikov tower.
We first deal with the case where $r = 1$, and will explain later how to generalize by taking products.

Let us introduce some notation. Write
\[
    \iota \colon \Omega^1(\cL) - 0 \hookrightarrow J^1(\cL) - 0
\]
for the inclusion. We define the fiber product of the two bundles above $X$ as the pullback:
\begin{equation}\label{pullback-fibrewise-product}
\begin{tikzcd}
(\Omega^1(\cL) -0) \times_X (J^1(\cL) - 0) \arrow[d, "p_2"'] \arrow[r, "p_1"] & J^1(\cL) -0 \arrow[d, "q_1"] \\
\Omega^1(\cL) -0 \arrow[r, "q_2"']     & X           
\end{tikzcd}
\end{equation}
with $p_i$ and $q_i$ the projections. Notice that we also have a map
\[
    (\mathrm{id}, \iota) \colon \Omega^1(\cL) -0 \hookrightarrow (\Omega^1(\cL) -0) \times_X (J^1(\cL) - 0)
\]
given by the identity on the first factor and the inclusion on the second. Writing $a \in H^{2n-1}(\bC^n - 0)$ for the generator, the Serre spectral sequence of the bundle $q_2 \colon \Omega^1(\cL) - 0 \to X$ shows that
\[
    H^{2n+1}(\Omega^1(\cL) - 0;\bQ) \cong H^2(X;\bQ) \otimes \bQ a.
\]
Although $a$ does not survive in the spectral sequence if $e(\Omega^1(\cL)) \neq 0$, we will write 
\[
    x \cdot a \in H^{2n+1}(\Omega^1(\cL) - 0;\bQ)
\]
for $x \in H^2(X)$ using the isomorphism above. We will also write 
\[
    b \in H^{2n+1}(\bC^{n+1}-0) = H^{2n+1}(J^1(\cL)-0)
\]
for the generator. We will need the following computation:
\begin{lemma}\label{lemma:pullback-generator}
In the cohomology group $H^{2n+1}(\Omega^1(\cL) - 0)$, we have the equality $\iota^*(b) = c_1(\cL) \cdot a$.
\end{lemma}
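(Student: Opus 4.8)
The plan is to move the whole picture onto projective bundles, where $a$ and $b$ become explicit Chern‑class expressions and $\iota^*$ becomes a ring map one can evaluate directly. Write $E = \Omega^1(\cL)$ (rank $n$) and $J = J^1(\cL)$ (rank $n+1$), so the jet sequence reads $0 \to E \to J \to \cL \to 0$. Via the tautological line bundle, $\Omega^1(\cL) - 0$ is canonically the $\bC^\times$‑bundle $\cO_{\bP(E)}(-1) - 0$ over $\bP(E)$ (a nonzero vector lies in the line it spans), and similarly $J^1(\cL) - 0 = \cO_{\bP(J)}(-1) - 0$ over $\bP(J)$; write $\rho_E, \rho_J$ for the projections and $h_E = c_1(\cO_{\bP(E)}(1))$, $h_J = c_1(\cO_{\bP(J)}(1))$. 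The inclusion $E \subset J$ induces $\bar\iota \colon \bP(E) \hookrightarrow \bP(J)$ with $\bar\iota^*\cO_{\bP(J)}(1) = \cO_{\bP(E)}(1)$, and $\iota$ sits over $\bar\iota$ as a pullback of $\bC^\times$‑bundles. Because $H^{2n+1}(X) = H^{2n+2}(X) = 0$, the Gysin sequence of $\Omega^1(\cL) - 0 \to X$ shows that fibre integration $\pi_! \colon H^{2n+1}(\Omega^1(\cL) - 0) \to H^2(X)$ is an isomorphism, and it realizes the isomorphism implicit in the notation $x \cdot a$; so it suffices to show $\pi_!(\iota^* b) = c_1(\cL)$.

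First I would compute $\rho_{J!}(b)$. Since $H^*(\bP(J))$ is free over $H^*(X)$ on $1, h_J, \dots, h_J^n$ and $H^{2n+1}(X) = H^{2n+2}(X) = 0$, the Gysin sequence of $\rho_J$ shows that $\rho_{J!} \colon H^{2n+1}(J^1(\cL) - 0) \to H^{2n}(\bP(J))$ is injective with image $\ker\bigl(\cup h_J \colon H^{2n}(\bP(J)) \to H^{2n+2}(\bP(J))\bigr)$, a one‑dimensional space spanned by $s_J := \sum_{i=0}^n c_i(J)\, h_J^{n-i}$ (with the $c_i(J)$ pulled back from $X$; this is the combination cut out by the defining relation of $\bP(J)$). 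As $p_{J!}(s_J) = 1$ and $b$ is the fibrewise generator (so its fibre integral is $1$), this forces $\rho_{J!}(b) = s_J$. Base change along the pullback square of $\bC^\times$‑bundles gives $\rho_{E!}(\iota^* b) = \bar\iota^*(s_J)$, and using $\bar\iota^* h_J = h_E$ together with the Whitney relation $c_i(J) = c_i(E) + c_1(\cL)\,c_{i-1}(E)$,
\[
\bar\iota^*(s_J) = \sum_{i=0}^n c_i(E)\, h_E^{n-i} \;+\; c_1(\cL)\sum_{i=1}^n c_{i-1}(E)\, h_E^{n-i}.
\]
The first sum is the defining relation of $\bP(E)$ and so vanishes in $H^*(\bP(E))$; the second equals $c_1(\cL)\cdot t_E$ with $t_E := \sum_{j=0}^{n-1} c_j(E)\, h_E^{n-1-j}$. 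Finally, since $\pi_! = p_{E!}\circ\rho_{E!}$ and $p_{E!}(h_E^{n-1}) = 1$ while $p_{E!}$ kills lower powers of $h_E$, the projection formula gives $\pi_!(\iota^* b) = p_{E!}\bigl(c_1(\cL)\, t_E\bigr) = c_1(\cL)$, which is the claim.

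The main thing to be careful about — more bookkeeping than obstacle — is orientation conventions, so that the composites of fibre integrations ($\rho_{E!}, \rho_{J!}, p_{E!}, p_{J!}$ and the sphere‑bundle integrations $\pi_!, \pi'_!$) are compatibly normalized and the scalar in $\rho_{J!}(b) = s_J$ is exactly $1$ rather than just a unit; this is automatic because every map in sight is a holomorphic submersion of complex manifolds (or the associated $\bC^\times$‑bundle), so the complex orientations multiply, and it is pinned down by $\pi'_!(b) = p_{J!}\rho_{J!}(b) = 1$. The one genuinely essential input is the vanishing of $\sum_{i=0}^n c_i(\Omega^1(\cL))\, h_E^{n-i}$ in $H^*\bigl(\bP(\Omega^1(\cL))\bigr)$: that is exactly why the $c_1(\cL)$‑linear term of $\bar\iota^*(s_J)$, rather than nothing, is what survives.
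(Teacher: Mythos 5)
Your argument is correct, and it shares with the paper only the first reduction: both proofs use that fibre integration $\pi_! = (q_2)_! \colon H^{2n+1}(\Omega^1(\cL)-0) \to H^2(X)$ is an isomorphism compatible with the notation $x \cdot a$, so that the claim becomes $\pi_!(\iota^*b) = c_1(\cL)$. From there the routes diverge. The paper evaluates $\pi_!(\iota^*b)$ formally in three lines: functoriality $(q_2)_! = (q_1)_! \circ \iota_!$, the projection (push--pull) formula $\iota_!\iota^*(b) = b \cup \iota_!(1)$, and the identification of $\iota_!(1)$ with the Euler class of the normal bundle of $\Omega^1(\cL)-0 \subset J^1(\cL)-0$, which is $q_1^*\cL$ by the jet exact sequence. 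Your proof instead avoids the wrong-way map of the embedding altogether and passes to the projectivizations $\bP(\Omega^1(\cL)) \subset \bP(J^1(\cL))$, pinning down $\rho_{J!}(b)$ as the Grothendieck-relation element $s_J = \sum_i c_i(J^1(\cL))h_J^{n-i}$ and then restricting via the Whitney formula; the paper's mechanism ``$\iota_!(1) = c_1(\cL)$'' reappears in your computation as the $c_1(\cL)$-linear term left over after the $\bP(\Omega^1(\cL))$ relation kills the leading sum. What the paper's route buys is brevity and no Chern-class bookkeeping; what yours buys is an entirely explicit computation needing only the projective bundle theorem, Gysin sequences and base change for fibre integration, with no appeal to the self-intersection/normal-bundle formula. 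Two small points you leave implicit but which do hold: $h_J \cdot s_J = -c_{n+1}(J^1(\cL))$, which vanishes only because $c_{n+1}(J^1(\cL)) \in H^{2n+2}(X;\QQ) = 0$ for degree reasons, so $s_J$ really lies in $\ker(\cup\, h_J)$; and the injectivity of $\rho_{J!}$ in degree $2n+1$ rests on $\rho_J^*h_J = 0$ (the tautological section trivializes $\rho_J^*\cO_{\bP(J)}(-1)$) together with $H^{2n+1}(X) = 0$, which is exactly what your freeness-plus-vanishing remark amounts to.
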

\begin{proof}
The integration along the fibers of $q_2$, or Gysin map, gives an isomorphism
\[
    (q_2)_! \colon H^{2n+1}(\Omega^1(\cL) -0) \isomto H^2(X)
\]
such that $(q_2)_!(x \cdot a) = x$ in our notation. It thus suffices to check that $(q_2)_!(\iota^*(b)) = c_1(\cL)$. This follows by functoriality of the Gysin maps and the push-pull formula:
\begin{align*}
    (q_2)_!(\iota^*(b)) &= (q_1)_! \circ \iota_!(\iota^*(b)) \\
        &= (q_1)_! ( b \cup \iota_!(1)) \\
        &= (q_1)_! ( b \cup (q_1)^*(c_1(\cL))) \\
        &= c_1(\cL),
\end{align*}
where we have used the standard fact that $\iota_!(1)$ is the first Chern class of the normal line bundle of the inclusion $\iota \from \Omega^1(\cL) -0 \subto J^1(\cL)-0$.
%For a proof: use the definition of the Gysin map via Poincaré duality.
%Then it amounts to see that q_1^*(c_1(L)) is Poincaré dual to the fundamental class of \Omega^1_X L - 0.
%To see this: If (x, v=(v_1, v_2)) is a point in S(J^1L), with v_1 in Omega^1L and v_2 in L, the fibre of q_1^*L above it is the set of points (x, v, w) with w in L. There is a natural section (x,v) -> (x,v,v_2) which is transverse (just compute the differential to see that) to the zero section of q_1^*L and its vanishing locus is exactly S(Omega^1L) inside S(J^1L). Then use the description of the first Chern as obstruction to finding a non-vanishing section.
\end{proof}

The following two lemmas are the main steps towards finding a relative Sullivan model for $\iota$.
\begin{lemma}\label{lemma:moore-postnikov-tower-k1}
Let
\[
    \big((\Omega^1(\cL) -0) \times_X (J^1(\cL) - 0) \big)_{f\bQ} \lra J^1(\cL) - 0
\]
be the fiberwise rationalization of $p_1$ (see eg \cite[Section~3]{bousfield_localization_1971}). It is a principal $K(\bQ,2n-1)$-fibration classified by a map
\[
    J^1(\cL) -0 \tolabel{k_1} K(\bQ,2n)
\]
whose homotopy class is represented by the cohomology class
\[
    k_1 = q_1^*( e(\Omega^1(\cL)) ) \in H^{2n}(J^1(\cL) - 0; \bQ) \dpunct.
\]
\end{lemma}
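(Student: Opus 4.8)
The plan is to identify the fiberwise rationalization of $p_1$ explicitly and read off its classifying map. First I would observe that $p_1 \colon (\Omega^1(\cL)-0)\times_X(J^1(\cL)-0) \to J^1(\cL)-0$ is, by the pullback square \eqref{pullback-fibrewise-product}, obtained from the bundle $q_2 \colon \Omega^1(\cL)-0 \to X$ by pulling back along the structure map $q_1 \colon J^1(\cL)-0 \to X$. Hence its fiber is $\bC^n - 0 \simeq S^{2n-1}$, so fiberwise rationalization replaces each fiber by $K(\bQ, 2n-1)$ — the rationalization of $S^{2n-1}$ (in this odd-dimensional case the rational homotopy type of $S^{2n-1}$ is just a single Eilenberg--MacLane space, so there is only one Moore--Postnikov stage and the resulting fibration is principal). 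Being principal, $\big((\Omega^1(\cL)-0)\times_X(J^1(\cL)-0)\big)_{f\bQ} \to J^1(\cL)-0$ is classified by a single $k$-invariant in $H^{2n}(J^1(\cL)-0;\bQ)$.

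Next I would pin down that class. Since the whole construction is pulled back from $X$ along $q_1$, and fiberwise rationalization is natural, the $k$-invariant of $p_1$ is $q_1^*$ of the $k$-invariant of $q_2 \colon \Omega^1(\cL)-0 \to X$ itself (equivalently, of the fiberwise rationalization of the sphere bundle $S(\Omega^1(\cL)) \to X$). So it remains to recall that the single $k$-invariant of the fiberwise rationalization of the sphere bundle of a rank-$n$ complex (hence oriented rank-$2n$ real) vector bundle $V \to X$ is its Euler class $e(V) \in H^{2n}(X;\bQ)$: indeed, the Gysin sequence of $S(V)\to X$ shows $H^{2n-1}(S(V)) \to H^0(X) \xrightarrow{\cup e(V)} H^{2n}(X)$, so the obstruction to a section of the rationalized sphere bundle — which \emph{is} the $k$-invariant for a principal $K(\bQ,2n-1)$-fibration — is exactly $e(V)$. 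Applying this with $V = \Omega^1(\cL)$ and pulling back along $q_1$ gives $k_1 = q_1^*(e(\Omega^1(\cL)))$.

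Finally I would tie up the identification of $H^{2n}(J^1(\cL)-0;\bQ)$ so that the formula is unambiguous: the Gysin/Serre spectral sequence of $J^1(\cL)-0 \to X$ (fiber $\bC^{n+1}-0 \simeq S^{2n+1}$) shows $q_1^* \colon H^{j}(X;\bQ) \to H^{j}(J^1(\cL)-0;\bQ)$ is an isomorphism for $j \le 2n$, so $q_1^*(e(\Omega^1(\cL)))$ is a well-defined nonzero-or-zero class in the correct group. The main obstacle — really the only subtle point — is justifying that fiberwise rationalization of an odd sphere bundle yields a \emph{principal} $K(\bQ,2n-1)$-fibration with $k$-invariant the Euler class; once one invokes that the rational homotopy type of $S^{2n-1}$ is $K(\bQ,2n-1)$ and that the Moore--Postnikov tower therefore has a single stage, naturality under the pullback $q_1$ does the rest. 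Everything else is a bookkeeping exercise with the two Gysin sequences.
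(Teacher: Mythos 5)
Your proposal is correct and follows essentially the same route as the paper: both reduce, via the pullback square \eqref{pullback-fibrewise-product} and naturality of fiberwise rationalization, to the fact that the rationalized sphere bundle $\Omega^1(\cL)-0 \to X$ is the principal $K(\bQ,2n-1)$-fibration classified by $e(\Omega^1(\cL))$, and then pull back along $q_1$. The only difference is cosmetic: the paper expresses this by composing two pullback squares (asserting the right-hand square becomes a pullback after fiberwise rationalization, using orientedness for nilpotence), while you spell out the identification of the $k$-invariant with the Euler class via transgression/primary obstruction, which the paper leaves implicit.
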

\begin{proof}
We have two commutative squares:
\[\begin{tikzcd}
(\Omega^1(\cL) -0) \times_X (J^1(\cL)-0) \arrow[d, "p_1"] \arrow[r] & \Omega^1(\cL) -0 \arrow[d, "q_2"] \arrow[r] & * \arrow[d] \\
J^1(\cL)-0 \arrow[r, "q_1"']                                                & X \arrow[r, "e(\Omega^1(\cL))"']    & {K(\bQ,2n)}
\end{tikzcd}\]
Notice first that both $p_1$ and $q_2$ are canonically oriented sphere bundles using the complex structure. In particular, they are nilpotent fibrations in the sense that the monodromy actions on the cohomology of their fibers are trivial, and it therefore makes sense to talk about fiberwise rationalization.
Now, the square on the left-hand is a pullback by definition, and the square on the right-hand is a pullback after fiberwise rationalization. So the outer square is a pullback after fiberwise rationalization.
\end{proof}

\begin{lemma}\label{lemma:moore-postnikov-tower-k2}
There is a commutative diagram\nopagebreak
\[\begin{tikzcd}
    \Omega^1(\cL) -0 \arrow[rd, "{(\mathrm{id}, \iota)}"'] \arrow[r, "{H^*(-;\bQ)\text{-iso}}"] & P \arrow[d] \arrow[r]   & * \arrow[d]           \\
    & \big((\Omega^1(\cL) -0) \times_X (J^1(\cL) - 0) \big)_{f\bQ} \arrow[r, "k_2"] & {K(\bQ,2n+1)}        
\end{tikzcd}\]
where $P$ is the pullback of the universal $K(\bQ,2n)$-fibration along the map $k_2$, whose homotopy class is represented by the cohomology class
\[
    k_2 = p_1^*(b) - p_2^*(c_1(\cL) \cdot a) \in H^{2n+1}((\Omega^1(\cL) -0) \times_X (J^1(\cL) - 0); \bQ),
\]
and the map
\[
    \Omega^1(\cL) -0 \lra P
\]
induces an isomorphism on rational cohomology.
\end{lemma}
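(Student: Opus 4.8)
The plan is to recognize $P$, together with the fibration $P \to B$ where $B \defeq \big((\Omega^1(\cL)-0) \times_X (J^1(\cL)-0)\big)_{f\bQ}$, as the next stage of the rational Moore--Postnikov tower of $\iota$, continuing the construction of \cref{lemma:moore-postnikov-tower-k1} (which identifies $B \to J^1(\cL)-0$ as the first, $K(\bQ,2n-1)$-stage). The relevant structural input is that the homotopy fiber of $\iota$ is, on each fiber of the sphere bundles over $X$, the homotopy fiber of the null-homotopic inclusion $S^{2n-1} \hookrightarrow S^{2n+1}$, hence $S^{2n-1} \times \Omega S^{2n+1}$, which rationally is $K(\bQ,2n-1) \times K(\bQ,2n)$; so exactly one further $K(\bQ,2n)$-stage suffices, and the whole lemma is the computation of its $k$-invariant.

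First I would establish the existence of the lift. Since $P$ is the homotopy fiber of $k_2 \colon B \to K(\bQ,2n+1)$, a lift of $\Omega^1(\cL)-0$ over $B$ to $P$ exists precisely when the composite $\Omega^1(\cL)-0 \to B \xrightarrow{k_2} K(\bQ,2n+1)$ is null-homotopic, and as $K(\bQ,2n+1)$ corepresents $H^{2n+1}(-;\bQ)$ this is equivalent to the vanishing of the pulled-back class. The map to $B$ is $(\mathrm{id},\iota)$ followed by the fiberwise rationalization; note that $k_2$ is well-defined on $B$ because $p_1$ is an odd-sphere bundle, so its fiberwise rationalization is a rational cohomology isomorphism, and the class $p_1^*(b) - p_2^*(c_1(\cL)\cdot a)$ on the fiber product transports uniquely to $B$. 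Since $p_1 \circ (\mathrm{id},\iota) = \iota$ and $p_2 \circ (\mathrm{id},\iota) = \mathrm{id}$, the pullback of $k_2$ along $\Omega^1(\cL)-0 \to B$ is $\iota^*(b) - c_1(\cL)\cdot a$, which is zero by \cref{lemma:pullback-generator}. A choice of null-homotopy then determines the lift and makes the diagram (including the square over $K(\bQ,2n+1)$) commute.

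Next I would prove the lift induces an isomorphism on rational cohomology. Replacing $\Omega^1(\cL)-0 \to B$ by a fibration, the lift becomes a map of fibrations over $B$, and since both fibrations have trivial monodromy on the rational cohomology of their fibers (being principal, resp.\ fiberwise loop spaces), it suffices by the comparison theorem for Serre spectral sequences to check that the induced map on fibers is a rational cohomology isomorphism. The fiber of $P \to B$ is $\Omega K(\bQ,2n+1) = K(\bQ,2n)$. The homotopy fiber of $\Omega^1(\cL)-0 \to B$ is rationally equivalent to the homotopy fiber of the section $(\mathrm{id},\iota)$ of the $S^{2n+1}$-bundle $p_2$ (because the fiberwise rationalization over $J^1(\cL)-0$ is a rational equivalence), and the homotopy fiber of a section of a sphere bundle over a connected base is the based loop space of the fiber, here $\Omega S^{2n+1} \simeq_\bQ K(\bQ,2n)$. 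Under these identifications the map on fibers is, up to homotopy, the looping of the composite $S^{2n+1} \hookrightarrow (\Omega^1(\cL)-0)\times_X(J^1(\cL)-0) \to B \xrightarrow{k_2} K(\bQ,2n+1)$ where $S^{2n+1}$ is a fiber of $p_2$; on such a fiber $p_2$ is constant and $p_1$ is the canonical inclusion $J^1(\cL)_x - 0 \hookrightarrow J^1(\cL)-0$, so $k_2$ restricts to the generator $b$ of $H^{2n+1}(S^{2n+1};\bQ)$. Hence $S^{2n+1} \to K(\bQ,2n+1)$ is a rationalization (for $2n+1$ odd, $(S^{2n+1})_\bQ \simeq K(\bQ,2n+1)$), its loop map $\Omega S^{2n+1} \to K(\bQ,2n)$ is a rational equivalence, and the comparison on fibers is done.

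The main obstacle is the last step: one must correctly identify the homotopy fiber of the sectioned sphere bundle $p_2$ and pin down the self-map of $K(\bQ,2n)$ it induces on fibers over $B$. The subtlety is that the fiberwise rationalization $B \to J^1(\cL)-0$ can kill the fiber class $a$ when $e(\Omega^1(\cL)) \ne 0$, so the comparison must be made via the restriction of $k_2$ to the $p_2$-fibers rather than naively on cohomology classes; it is precisely \cref{lemma:pullback-generator} (through the Gysin/push--pull computation of $\iota^*b$) that makes this restriction transparent. A more computational alternative is to run the Serre spectral sequence of $P \to B$ directly --- its only nonzero differential is $d_{2n+1}(u) = k_2$ --- identify $H^*(P;\bQ)$ with the Koszul-type quotient of $H^*(B;\bQ)$ by $k_2$, and match this against $H^*(\Omega^1(\cL)-0;\bQ)$ computed from the Serre spectral sequence of the sphere bundle $\Omega^1(\cL)-0 \to X$, with $\lambda^*$ visibly realizing the isomorphism.
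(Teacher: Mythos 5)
Your first step (existence of the lift from the null-homotopy of $k_2\circ(\mathrm{id},\iota)$, i.e.\ from $\iota^*(b)-c_1(\cL)\cdot a=0$ via \cref{lemma:pullback-generator}) coincides with the paper's. The gap is in your second step, where you claim that the homotopy fiber of $\Omega^1(\cL)-0\to B:=\big((\Omega^1(\cL)-0)\times_X(J^1(\cL)-0)\big)_{f\bQ}$ is rationally $\Omega S^{2n+1}\simeq_\bQ K(\bQ,2n)$ ``because the fiberwise rationalization is a rational equivalence''. A rational cohomology equivalence does not preserve homotopy fibers outside the nilpotent setting, and the spaces here are not assumed nilpotent (the paper explicitly avoids this). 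The identification of the homotopy fiber of a section with the loop space of the fiber applies to the map into the \emph{un-rationalized} fiber product; composing with the fiberwise rationalization changes the homotopy fiber, and for $n=1$ it does so drastically. Concretely, take $X=\PP^1$, $\cL=\cO(d)$, $d\neq 2$: then $\pi_1(\Omega^1(\cL)-0)\cong\ZZ/(d-2)$ is finite, while $\pi_1(B)$ is an infinite divisible group (it is the cokernel of $\pi_2(J^1(\cL)-0)\cong\ZZ\to\pi_1(K(\QQ,1))\cong\QQ$), so the homotopy fiber of $\Omega^1(\cL)-0\to B$ has infinite $\pi_0$ and already fails to have the $H^0$ of $K(\QQ,2)$. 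Thus the fiberwise hypothesis of the Serre/Zeeman comparison you invoke is simply false in general (and the curve case is needed: the lemma is used for all $X$, including $\Sigma_1$ in the computations). Even for $n\ge 2$, your argument needs the extra step of splitting off the homotopy fiber of the fiberwise rationalization and checking it is simply connected with torsion homotopy groups, and the identification of the induced map on fibers with $\Omega(k_2|_{S^{2n+1}})$ --- which you yourself flag as the main obstacle --- is asserted rather than proved.

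This is exactly why the paper does not argue fiberwise over $B$. It computes $H^*(B)$ and then $H^*(P)$ by the Eilenberg--Moore theorem (using \cref{lemma:moore-postnikov-tower-k1}), obtaining the Koszul-type \cdga{} $H^*(X)[x_{2n+1},y_{2n-1},z_{2n}]$ with $d(z)=x-c_1(\cL)y$, which \cref{lemma:homological-algebra,lemma:exact-multiplication} reduce to a model of $\Omega^1(\cL)-0$; crucially, it then shows that the specific map realizes this abstract isomorphism by proving that $H^*((\mathrm{id},\iota))$ is surjective (Eilenberg--Moore applied to the defining pullback square of the fiber product) and concluding by a dimension count. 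Your ``computational alternative'' at the end is essentially this argument, but as sketched it omits precisely that surjectivity step: knowing that $H^*(P)$ and $H^*(\Omega^1(\cL)-0)$ are abstractly isomorphic does not show that the lift induces the isomorphism. To repair your proposal you should either adopt the paper's cohomological route (or at least add the surjectivity argument), rather than rely on a fiberwise rational equivalence over $B$, which does not hold.
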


\begin{proof}
First, observe that the composition
\[
    \Omega^1(\cL) - 0 \tolabel{(\mathrm{id}, \iota)} \big((\Omega^1(\cL) -0) \times_X (J^1(\cL) - 0)\big)_{f\bQ} \tolabel{k_2} K(\bQ,2n+1)
\]
is null-homotopic. Indeed, this is the content of \cref{lemma:pullback-generator}. Thus, if we consider the homotopy pullback square:
\[
% https://tikzcd.yichuanshen.de/#N4Igdg9gJgpgziAXAbVABwnAlgFyxMJZABgBoBGAXVJADcBDAGwFcYkQAdDgIywHMAFAK4B5ALYw+9AHrkA+gA0ABFwh4JcFRwDGAGSUBaYgEot6+IqUCAUrK57DSk1t59jc4ADMu3AIoBfEH9SdExcfEIUcgpqOiZWdgBpYR5fUgAmMABqcmMgkJAMbDwCImjiWIYWNkQQACp80OKIojIKmiqE2oAFINiYKD54IlBPACcIMSQyEBwIJGi46vYAazl0kBpGem4YRm6wksiQMf4ACxxGkHHJpHSaOYXg0YmpxABmB-nEYmfr16Qn1m33S-ko-iAA
\begin{tikzcd}
P \arrow[d] \arrow[r]                                                                 & * \arrow[d]   \\
\big((\Omega^1(\cL) -0) \times_X (J^1(\cL) - 0) \big)_{f\bQ} \arrow[r, "k_2"'] & {K(\bQ,2n+1)}
\end{tikzcd}\]
we obtain a map $\Omega^1(\cL) - 0 \to P$ by universal property, which makes the triangle of the lemma commute. We now have to verify that it is a rational cohomology equivalence. By the Eilenberg--Moore theorem applied to the principal fibration of \cref{lemma:moore-postnikov-tower-k1} classified by $k_1$, the rational cohomology of $\big((\Omega^1(\cL) -0) \times_X (J^1(\cL) - 0)\big)_{f\bQ}$ is given by that of the \cdga{}
\[
    H^*(X)[x_{2n+1}, y_{2n-1}], \quad d(x) = 0, \ d(y) = e(\Omega^1(\cL)).
\]
Here the indices on the variables indicate the cohomological degree. We have also used that $X$ is formal to use its cohomology as a model. 
Again using the Eilenberg--Moore theorem, we obtain a \cdga{} computing the rational cohomology of $P$ of the form
\[
    H^*(X)[x_{2n+1}, y_{2n-1}, z_{2n}], \quad d(x) = 0, \ d(y) = e(\Omega^1(\cL)), \ d(z) = x - c_1(\cL)y.
\]
By \cref{lemma:homological-algebra} (in the notation of that lemma, $z$ is $x-c_1(\cL)$ for us and the assumption is verified using \cref{lemma:exact-multiplication}), the cohomology of this \cdga{} is isomorphic to the cohomology of the following \cdga{}:
\[
    H^*(X)[x_{2n+1}, y_{2n-1}] \big/ \big(x - c_1(\cL)\big), \quad d(x) = 0, \ d(y) = e(\Omega^1(\cL)),
\]
which is also isomorphic to
\[
    H^*(X)[y_{2n-1}], \quad d(y) = e(\Omega^1(\cL)),
\]
and whose cohomology is $H^*(\Omega^1(\cL)-0;\bQ)$.
The morphism induced in rational cohomology by the map
\[
    (\mathrm{id}, \iota) \colon \Omega^1(\cL) -0 \lra \big((\Omega^1(\cL) -0) \times_X (J^1(\cL) - 0)\big)_{f\bQ}
\]
is surjective. Indeed this can be checked using the Eilenberg--Moore theorem applied to the pullback square~\eqref{pullback-fibrewise-product}. Therefore, using commutativity of the triangle
\[
% https://tikzcd.yichuanshen.de/#N4Igdg9gJgpgziAXAbVABwnAlgFyxMJZABgBpiBdUkANwEMAbAVxiRAB12B5AWxgHM6APQCMAfQAaAAk4Q8fODPYBjADJSAtFOIgAvqXSZc+QihHkqtRizYAFPQZAZseAkXMjL9Zq0Qd2AEZY-AAUIZy8AsLi0rLy8EpqmsQAlErxcJJSIQBSopxJWqlKQfwpYsAAZpwBAIq6epYwUPzwRKCVAE4QPEhkIDgQSCL6Hd29iOYDQ4gATKMgXT191INI8xS6QA
\begin{tikzcd}
\Omega^1(\cL) - 0 \arrow[r] \arrow[rd] & P \arrow[d]                                                         \\
                                                & \big((\Omega^1(\cL) -0) \times_X (J^1(\cL) - 0) \big)_{f\bQ}
\end{tikzcd}
\]
we see that the morphism on cohomology
\[
    H^*(P;\bQ) \lra H^*(\Omega^1(\cL)-0;\bQ)
\]
is surjective. As shown above, both sides of this morphism are abstractly isomorphic rational vector spaces of finite dimension. Hence the morphism must be an isomorphism.
\end{proof}

\begin{remark}
If we furthermore assume that $X$ is nilpotent, then so are $\Omega^1(\cL) - 0$ and the space $P$ from \cref{lemma:moore-postnikov-tower-k2}. In that case, the map $\Omega^1(\cL) -0 \to P$ is a rational homotopy equivalence, being a rational cohomology equivalence between nilpotent spaces. In particular, the cohomology classes $k_1$ and $k_2$ are the k-invariants of the Moore--Postnikov tower of the rationalization of $\iota$ (which is well-defined as a map between nilpotent spaces). However, as we are ultimately interested in making a statement about cohomology, we do not have to require $X$ be nilpotent for our purposes.
\end{remark}

We are now ready to give a rational model of $\iota$ in the form of a cofibration of \cdga{}s (see \cite[Lemma~14.4]{felix_rational_2001} for a proof that relative Sullivan algebras are cofibrations in the commonly used projective model structure):
\begin{lemma}\label{lemma:model-inclusion}
Let $\iota$ be the inclusion
\[
    \iota \colon (\Omega^1(\cL) - 0)^r|_{\F^r(X)} \lra (J^1(\cL) - 0)^r|_{\F^r(X)} \dpunct.
\]
Choose representatives $e, c \in \APL(X)$ of the cohomology classes $e(\Omega^1(\cL))$ and $c_1(\cL)$.
Then the morphism $\APL(\iota)$ induced between the piecewise linear differential forms fits into a commutative diagram
\[
% https://tikzcd.yichuanshen.de/#N4Igdg9gJgpgziAXAbVABwnAlgFyxMJZAJgBoAGAXVJADcBDAGwFcYkQAdDgQQAUAZABSCuAeQC2MAOb0AegEYA+gA0ABFwh5JcdRwDG-VQFpV5AJSyATgB9FwLgDErg5WYC+ZkG9LpMufIQo5BTUdEys7Fx8QoIAUgpcBsamFjZ2js6uHl4+IBjYeAREZPKhDCxsiJw8AoJOli5mZrqaWNq6-PTiAEZQ9IIAHorypFxQmnB6pEOWpACew6Mc4ziT84qzqgBei2MTUzuWnt6+BQFEwaU05RFVUbX1jc0aWvAdXb39QyN7q1Mzx1CMCgUngRFAADNLBBxEgyCAcBAkMEwhVIhxsJIAI4gGiMejdGCMXh+QqBECWLBSAAWOBykOhsMQIwRSMQKJulWq0REHHwOHonjxBKJJLORSqjBgELpJxAUJhSAAzDREXDVfQsIx2NSIBAANb0+WM5WqtksznozEwHFuShuIA
\begin{tikzcd}
\APL((J^1(\cL) - 0)^r|_{\F^r(X)}) \arrow[rr, "\APL(\iota)"]                           &  & \APL((\Omega^1(\cL) - 0)^r|_{\F^r(X)})                                                    \\
\APL(\F^r(X))[x_1, \dotsc, x_r] \arrow[rr, hook] \arrow[u, "\simeq"] &  & \APL(\F^r(X))[x_1,\dotsc,x_r,y_1,\dotsc,y_r, z_1,\dotsc,z_r] \arrow[u, "\simeq"']
\end{tikzcd}\]
where the added generators have degrees $|x_i| = 2n+1$, $|y_i|=2n-1$ and $|z_i|=2n$, and the differentials are given by $d(x_i) = 0$, $d(y_i) = \APL(\pi_i)(e)$ and $d(z_i) = x_i - \APL(\pi_i)(c)y_i$.
\end{lemma}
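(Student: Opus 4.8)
The plan is to reduce to the case $r = 1$, which amounts to repackaging \cref{lemma:moore-postnikov-tower-k1,lemma:moore-postnikov-tower-k2} in the language of relative Sullivan algebras, and then to obtain the general statement by base change along the projections $\pi_i \colon \F^r(X) \to X$ together with a fiber-product argument.

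For $r = 1$, I would first note that $J^1(\cL) - 0$ deformation retracts onto the unit sphere bundle of $J^1(\cL)$, an oriented $S^{2n+1}$-bundle over $X$ whose Euler class $c_{n+1}(J^1(\cL))$ lies in $H^{2n+2}(X;\QQ) = 0$ since $\dim_\RR X = 2n$. Hence the standard relative Sullivan model of an odd-dimensional sphere bundle (see \cite{felix_rational_2001}) gives a quasi-isomorphism $(\APL(X)[x],\, dx = 0) \xrightarrow{\simeq} \APL(J^1(\cL)-0)$ under $\APL(X)$, with $|x| = 2n+1$ and $x$ restricting to a generator on each fiber. I would then compose with $\APL(\iota)$ and factor the composite as a relative Sullivan algebra $(\APL(X)[x] \otimes \Lambda V, d) \xrightarrow{\simeq} \APL(\Omega^1(\cL)-0)$; this automatically produces the commuting square of the statement, and it remains only to pin down $V$ and $d|_V$. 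That is exactly what the two preceding lemmas provide: \cref{lemma:moore-postnikov-tower-k1} shows the first stage adjoins a generator $y$ of degree $2n-1$ with $dy$ a representative of $q_1^*(e(\Omega^1(\cL)))$, which under $\APL(J^1(\cL)-0) \simeq \APL(X)[x]$ is just $e$; \cref{lemma:moore-postnikov-tower-k2} shows the second stage adjoins $z$ of degree $2n$ with $dz$ a representative of $p_1^*(b) - p_2^*(c_1(\cL)\cdot a)$, which under the identifications $b \leftrightarrow x$, $a \leftrightarrow y$ (so $c_1(\cL)\cdot a \leftrightarrow cy$) becomes $x - cy$. The verification that the resulting \cdga{} $(\APL(X)[x,y,z],\, dx = 0,\, dy = e,\, dz = x - cy)$ really does compute $H^*(\Omega^1(\cL)-0;\QQ)$ — so that the factorization is legitimate — is precisely the computation at the end of the proof of \cref{lemma:moore-postnikov-tower-k2}, via \cref{lemma:homological-algebra,lemma:exact-multiplication}.

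For general $r$, both $(\Omega^1(\cL)-0)^r|_{\F^r(X)}$ and $(J^1(\cL)-0)^r|_{\F^r(X)}$ are the fiberwise products over $\F^r(X)$ of the pullbacks $\pi_i^*(\Omega^1(\cL)-0)$ and $\pi_i^*(J^1(\cL)-0)$, and $\iota$ is the fiberwise product of the maps $\pi_i^*(\iota)$. I would base-change the $r=1$ models along $\APL(\pi_i) \colon \APL(X) \to \APL(\F^r(X))$: each of $\APL(X)[x]$ and $\APL(X)[x,y,z]$ is a relative Sullivan algebra, hence cofibrant over $\APL(X)$, so the ordinary tensor product $\APL(\F^r(X)) \otimes_{\APL(X)} (-)$ computes the derived one, and since $J^1(\cL)-0 \to X$ and $\Omega^1(\cL)-0 \to X$ are fibrations with connected (spherical) fiber, finite-dimensional rational cohomology and trivial monodromy, \cref{eilenberg-moore-theorem} identifies it with $\APL$ of the pullback. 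This produces relative Sullivan models $\APL(\F^r(X))[x_i]$ and $\APL(\F^r(X))[x_i,y_i,z_i]$ with $dy_i = \APL(\pi_i)(e)$ and $dz_i = x_i - \APL(\pi_i)(c)\,y_i$. Tensoring the $r$ copies together over $\APL(\F^r(X))$ — again invoking cofibrancy and \cref{eilenberg-moore-theorem}, now applied iteratively to the fiberwise products, which are homotopy pullbacks of fibrations over $\F^r(X)$ — gives the claimed models, with the obvious inclusion modelling $\APL(\iota)$.

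The step that will need the most care is the bookkeeping in this last reduction: checking cofibrancy of the relevant relative Sullivan algebras and the hypotheses of \cref{eilenberg-moore-theorem} at each base change and each fiber product, and confirming that the differentials assemble exactly as displayed. By contrast the genuine geometric input — the identification of the $k$-invariants — is already isolated in \cref{lemma:moore-postnikov-tower-k1,lemma:moore-postnikov-tower-k2}, so the $r = 1$ case is essentially translation.
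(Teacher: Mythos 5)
Your proposal is correct and follows essentially the same route as the paper: rational triviality of the $S^{2n+1}$-bundle $J^1(\cL)-0$ gives the free model on the $x_i$, the two Moore--Postnikov lemmas (\cref{lemma:moore-postnikov-tower-k1,lemma:moore-postnikov-tower-k2}) together with the Eilenberg--Moore theorem pin down the relative Sullivan extension by $y,z$ with $dy=e$, $dz=x-cy$ in the case $r=1$, and the general case is the $r$-fold product. Your base-change-along-$\pi_i$ and fiberwise-product bookkeeping is just a spelled-out version of the paper's terse ``take the $r$-fold product of the case $r=1$,'' and the cofibrancy/EM hypotheses you flag do check out as you describe.
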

\begin{proof}
Rationally, the bundle $(J^1(\cL) - 0)^r|_{\F^r(X)}$ is trivial. In other words, we have an isomorphism
\[
    H^*((J^1(\cL) - 0)^r|_{\F^r(X)}; \bQ) \cong H^*(\F^r(X); \bQ) \otimes H^*((S^{2n+1})^r; \bQ) \dpunct.
\]
We thus obtain a quasi-isomorphism
\[
    \APL(\F^r(X))[x_1,\dotsc,x_r] \equivto \APL((J^1(\cL) - 0)^r|_{\F^r(X)})
\]
by sending each $x_i$ to a cocycle representing the generator of the $i$th sphere. Now, when $r = 1$, the result follows from applying the Eilenberg--Moore theorem twice using \cref{lemma:moore-postnikov-tower-k1,lemma:moore-postnikov-tower-k2}. The general case $r \geq 1$ follows by taking the $r$-fold product of the case $r=1$.
\end{proof}

It remains to prove our main theorem:
\begin{proof}[Proof of \cref{serre-ss-degeneration}]
Recall that we have a homotopy pullback diagram
\[
% https://tikzcd.yichuanshen.de/#N4Igdg9gJgpgziAXAbVABwnAlgFyxMJZABgBpiBdUkANwEMAbAVxiRAB12AtAfWE4DGAYQB6xAL4iATgApBAGQCUIcaXSZc+QigCM5KrUYs2AMWkyAGooAEnPAFt4t9gHE69+3QEEZAKRE6CtYAtMTKqurYeAREZDoG9MysiCBy7ADyjgDmdAE8Fs4QDk5BwdZh0gA+fGayVuIqaiAYUVpEevHUicYpfgGl5YpVNeb1KgYwUFnwRKAAZlIQ9khkIDgQSABMESALS1vU60gAzId0WAxsABYQEADWjfOLy4irR4g6O3svemsbiKdDEk2JxPDgrlJ7MAYDQGuIKOIgA
\begin{tikzcd}
\Z_{\cC^0}^r(\cL) \arrow[d] \arrow[r]                    & \F^r(X) \times \Gammacon(J^1(\cL) -0) \arrow[d, "\mathrm{ev}"] \\
(\Omega^1(\cL) - 0)^r|_{\F^r(X)} \arrow[r, hook] & (J^1(\cL) - 0)^r|_{\F^r(X)}                                   
\end{tikzcd}
\]
The Eilenberg--Moore theorem states that the induced morphism from the derived tensor product
\[
    \APL((\Omega^1(\cL) - 0)^r|_{\F^r(X)}) \otimes^\bL_{\APL((J^1(\cL) - 0)^r|_{\F^r(X)})} \APL(\F^r(X) \times \Gammacon(J^1(\cL) -0)) \equivto \APL(\Z_{\cC^0}^r(\cL))
\]
is a quasi-isomorphism. We thus have to show that this derived tensor product is quasi-isomorphic to the proposed \cdga{} of \cref{serre-ss-degeneration}. By \cref{lemma:model-inclusion}, we have a commutative diagram:
%\[\begin{tikzcd}
%\APL((\Omega^1(\cL) - 0)^r|_{\F^r(X)})                                                    & \APL((J^1(\cL) - 0)^r|_{\F^r(X)}) \arrow[l, "\APL(\iota)"'] \arrow[r, "\APL(\mathrm{ev})"]     & \APL(\F^r(X) \times \Gammacon(J^1(\cL) -0))                                \\
%\APL(\F^r(X))[x_1,\dotsc,x_r,y_1,\dotsc,y_r, z_1,\dotsc,z_r] \arrow[u, "\simeq"'] & \APL(\F^r(X))[x_1,\dotsc,x_r] \arrow[u, "\simeq"] \arrow[r, "\phi"] \arrow[l, hook'] & \APL(\F^r(X) \times \Gammacon(J^1(\cL) -0)) \arrow[u, equal]
%\end{tikzcd}\]
\[\begin{tikzcd}[column sep=small, row sep=large]
&[-35pt] \APL(\F^r(X) \times \Gammacon(J^1(\cL) -0)) \rar[equal] & \APL(\F^r(X) \times \Gammacon(J^1(\cL) -0)) \\
\APL((\Omega^1(\cL) - 0)^r|_{\F^r(X)}) & \APL((J^1(\cL) - 0)^r|_{\F^r(X)}) \arrow[l, "\APL(\iota)"'] \arrow[u, "\APL(\mathrm{ev})"]  \\
\APL(\F^r(X))[x_1,\dotsc,x_r,y_1,\dotsc,y_r, z_1,\dotsc,z_r] \arrow[u, "\simeq"'] & & \APL(\F^r(X))[x_1,\dotsc,x_r] \arrow[ul, "\simeq"', start anchor=north west, end anchor=south east] \arrow[uu, "\phi"] \arrow[ll, hook']
\end{tikzcd}\]
where $\phi$ is defined as the composition so that the diagram commutes. 
The derived tensor product is the homotopy pushout of the inner span, which is quasi-isomorphic to that of the outer span.
The latter can be computed as an underived pushout, as the bottom map is a cofibration. 
We thus obtain a \cdga{} of the form:
\begin{equation}\label{equation:em-with-apl}
        (\APL(\F^r(X) \times \Gammacon(J^1(\cL) -0)))[y_1,\dotsc,y_r, z_1,\dotsc,z_r]
\end{equation}
with differential given on the extra generators by
\[
    d(y_i) = \APL(\pi_i)(e), \quad d(z_i) = \phi(x_i) - \APL(\pi_i)(c)y_i
\]
with $e, c \in \APL(X)$ cocycles representing $e(\Omega^1(\cL))$ and $c_1(\cL)$ respectively. By \cref{sec:recollections-sectionspace}, we have a quasi-isomorphism
\[
    \Symgr(H^{*}(X;\bQ)[-1]) \equivto \APL(\Gammacon(J^1(\cL) -0)) \dpunct.
\]
By \cref{construction:totaro-model}, there is a zigzag of quasi-isomorphisms
\[
    \APL(\F^r(X)) \lequivto \cdots \equivto C_r(X) \dpunct.
\]
Now recall the following manipulation: if $A$ and $B$ are quasi-isomorphic \cdga{}s (through a zigzag) and $a \in A$, $b \in B$ are cocyles representing the same cohomology class, then $(A[x], dx=a)$ and $(B[x], dx=b)$ are quasi-isomorphic (through a zigzag).
Applying this fact inductively, we see that the \cdga{} \cref{equation:em-with-apl} is quasi-isomorphic to a \cdga{} of the form
\[
    (C_r(X) \tensor \Symgr(H^{*}(X;\bQ)[-1])) [y_1, \dotsc, y_r, z_1, \dotsc, z_r] \dpunct,
\]
with differential
\[
    d(y_i) = \pi_i^*(e(\Omega^1(\cL))), \quad \text{and} \quad d(z_i) = \epsilon_i - \pi_i^*(c_1(\cL))y_i \dpunct. \qedhere
\]
\end{proof}

\subsection{Hodge weights}
\label{cdga-weights-proof}

Here we prove \cref{weights-on-stable-cohomology}, ie the isomorphism in \cref{maintheorem:stability} is weight preserving.

\begin{proof}[Proof of \cref{weights-on-stable-cohomology}]
Consider the Leray spectral sequence of the map 
\begin{align*}
    \pi \from \Z^r(\cL) &\lra (\Omega^1(\cL) - 0)^r|_{\F^r(X)} \\
    (f, x_1,\dotsc,x_r) &\longmapsto (\diff f(x_1), \dotsc, \diff f(x_r)) \dpunct.
\end{align*}
Since this map is algebraic, the spectral sequence is of mixed Hodge structures, and computes the correct weights in cohomology. 
But $\pi$ and any restriction $\pi|\pi^{-1}(V)$ for $V \subset (\Omega^1(\cL) - 0)^r|_{\F^r(X)}$ open is also a microfibration. 
By \cref{adapted-microcomparison}, comparing the Leray sheaves of $\pi$ and $\pi_{C^0}$ in 
\[\begin{tikzcd}
	\Z^r(\cL) \dar["\pi"] \rar["j^1"] & \Z^r_{\cC^0}(\cL) \dar["\pi_{\cC^0}"] \\
(\Omega^1(\cL) - 0)^r|_{\F^r(X)} \rar[equal] & (\Omega^1(\cL) - 0)^r|_{\F^r(X)}
\end{tikzcd}\]
we get that $R^q \pi_* \QQ$ is locally constant when $\cL$ is sufficiently jet ample, and hence the Leray spectral sequence for $\pi$ agrees in a range with the Leray--Serre spectral sequence for $\pi_{C^0}$, starting from the $E_2$ page. 
But we can compute the differentials in this Serre spectral sequence using rational models, following Grivel--Halperin--Thomas \cites[Theorem~5.1]{grivel_formes_1979}{halperin_lectures_1983}. 

To use this construction of the Serre spectral sequence and to understand its terms, we need to understand the cohomology of the (homotopy) fiber of the map $\pi_{\cC^0}$ and the monodromy action of $\pi_1((\Omega^1(\cL) - 0)^r|_{\F^r(X)})$ on it.

Let us first show that the monodromy action is trivial. To this end, observe that the pullback square \cref{main-pullback} identifies the fiber of $\pi_{\cC^0}$ with the fiber of $\ev \from \Gamma_{\cC^0}(J^1(\cL) - 0) \times \F^r(X) \to (J^1(\cL) - 0)^r|_{\F^r(X)}$, and the monodromy representation factors through $\pi_1((J^1(\cL) - 0)^r|_{\F^r(X)})$. Let $\vec{v} = ((x_1,v_1),\dotsc,(x_r,v_r))$ be a basepoint in $(J^1(\cL) - 0)^r$ and write $\vec{x} = (x_1,\dotsc,x_r)$. Let
\[
    \ev_{\vec{x}} \colon \Gamma_{\cC^0}(J^1(\cL) - 0) \lra (J^1(\cL) - 0)_{\vec{x}} = \prod_{i = 1}^r \ (J^1(\cL) - 0)_{x_i}, \quad s \longmapsto (s(x_1),\dotsc,s(x_r))
\]
be the pullback of $\ev$ along the inclusion $(J^1(\cL) - 0)_{\vec{x}} \hookrightarrow (J^1(\cL) - 0)^r|_{\F^r(X)}$ of the fiber at $\vec{x}$. Being pulled back, the inclusion between the homotopy fibers $F(\ev_{\vec x}) \hookrightarrow F(\ev)$ is a homotopy equivalence. Recall the standard point-set model for $F(\ev_{\vec x})$ given by:
\[
    F(\ev_{\vec x}) = \set{(f,\gamma_1,\dotsc,\gamma_r)}{f \in \Gamma_{\cC^0}(J^1(\cL) - 0), \ \gamma_i \colon f(x_i) \rightsquigarrow (x_i,v_i) \text{ path in } (J^1(\cL)-0)_{x_i}} \dpunct.
\]
Now we can describe the monodromy action on $F(\ev_{\vec x}) \simeq F(\ev)$. Indeed, a loop at $\vec{v}$ in $(J^1(\cL) - 0)^r|_{\F^r(X)}$ projects down to $r$ loops in $X$ based respectively at $x_1,\dotsc,x_r$. The loop at $x_i$ gives a monodromy map $(J^1(\cL)-0)_{x_i} \to (J^1(\cL)-0)_{x_i}$, and post-composition on the paths in the model of $F(\ev_{\vec x})$ is our monodromy under investigation. Finally we conclude that this monodromy is trivial since $J^1(\cL)$ is a complex, hence oriented, bundle.

We now compute the rational cohomology of the fiber of $\pi_{\cC^0}$. Let $c = c_1(\cL) \in H^2(X;\bQ)$. As we have explained in \cref{rational-model-proof}, the \cdga{} $A_r(X, c)$ is a relative Sullivan extension of $C_r(X)[\alpha_1,\dotsc,\alpha_r]$ which models $\pi_{\cC^0}$. Therefore the quotient \cdga{}
\[
    A_r(X, c) \otimes_{C_r(X)[\alpha_1,\dotsc,\alpha_r]} \bQ \cong \big(\Symgr(H^{*}(X)[-1]) [\eta_1,\dotsc,\eta_r]), \ d\eta_i = s[X] \big)
\]
is a model of the fiber. Its cohomology is given by the graded vector space
\[
    \Symgr\big(H^{*<2n}(X)[-1]\big)[\eta_1',\dotsc,\eta_{r-1}']
\]
where each $\eta_i'$ lives in degree $2n$ and represents the cocycle $[\eta_i - \eta_r]$.

Since we have verified that the monodromy action is trivial, by \cite[Theorem~5.1]{grivel_formes_1979} we get that the Serre spectral sequence for $\pi_{\cC^0}$ has the form
\begin{align*}
    E_2^{p, q} &\cong H^p((\Omega^1(\cL) - 0)^r|_{\F^r(X)}) \tensor H^q(F(\ev))\\
&\cong H^p((\Omega^1(\cL) - 0)^r|_{\F^r(X)}) \tensor \left(\Symgr\big(H^{*<2n}(X)[-1]\big)[\eta_1', \dotsc, \eta_{r-1}']\right)_q \dpunct.
\end{align*}
Finally, we compute the differentials in this spectral sequence from the differential in $A_r(X, c)$. Recall that we write $sb_j$ for the (shifted) basis of $H^{*<2n}(X)[-1]$. Writing $d_k$ for the differential on the $k$th page, we have:
\[
    \forall k, \ d_k(sb_j) = 0 \dpunct; \qquad d_2\eta_i' = \sum_{|b_j| = 2} (\pi_i^* - \pi_r^*)[b_j^\vee] \tensor sb_j
\]
The class $[sb_j]$ has weight $|b_j| + 2$ since it pulls back from a corresponding generator in $H^{|b_j|+1}(U(\cL))$.
Therefore the only way to make these differentials weight preserving is to give the classes $\eta_i'$ weight $2n + 2$.
Now, for any cohomology class $\gamma \in H^*(\Z^r(\cL))$ of homogeneous weight and \emph{in the stable range}, we can choose a representative cycle in this spectral sequence, and then a subsequent representative in $A_r(X, c)$, and therefore the weight of this class must agree with that described in the theorem.
\end{proof}

\subsection{Explicit computations}
\label{sec:computations}

In this section we list some computations of the stable cohomology $H^*(A_r(X, c))$ for specific $X$, $c$ and $r \ge 2$.

\begin{example}
Let $X = \PP^1$, $r = 2$ and $c$ the only element of $\PP H^2(\PP^1) = \PP^0$.
Consider the algebra presentation $H^*(\PP^1) = \QQ[x]/x^2$ with $|x| = 2$ to write
\[A_2(X, c) = \Symgr[x_1, x_2, G, s_1, s_x, \alpha_1, \alpha_2, \eta_1, \eta_2]/(x_i^2, x_1G - x_2G) \dpunct,\]
with $|G| = |s_1| = |\alpha_i| = 1$, $|x_i| = |\eta_i| = 2$, $|s_x| = 3$ and differential
\[dG = x_1 + x_2 \dpunct, \qquad d\alpha_i = x_i \dpunct, \qquad d\eta_i = s_x + x_i s_1 - x_i \alpha_i \dpunct.\]
Using \cref{lemma:homological-algebra} and suitable changes of variables one can check that
\[H^*(A_2(X, c)) \isom \Symgr[s, \gamma, \eta, \tau]\]
where $s = [s_1]$, $\gamma = [G - \alpha_1 - \alpha_2]$ are of degree $1$, $\eta = [\eta_1 - \eta_2 + (s_1 + G)(\alpha_1 - \alpha_2) + \alpha_1 \alpha_2]$ is of degree $2$ and $\tau = [\alpha_1 x_1] = [\alpha_2 x_2]$ is of degree $3$.

Since in this case $\Z^r(\cO(d))/\CC^\times \isom \F^d(\PP^1)/\fS_{d - r}$ (with the action leaving $r$ of the coordinates fixed) and \cref{scalar-multiple-leray-hirsch} applies, this can also be obtained degree-by-degree from existing computations of $H^i(\F^d(\PP^1))$ as an $\fS_d$ representation, see eg \cite{lehrer_action_1986} for the closely related\footnote{And indeed equivalent, since the action of $\PGL(2)$ on $\PP^1$ by Möbius transformations is transitive and the induced action on $\F^d (\PP^1)$ behaves like a direct factor in cohomology once $d \ge 3$.} computation of $H^i(\F^d(\CC))$.
\end{example}

\begin{example}
\Cref{betti-number-table} lists the stable Betti numbers of $Z^r(\cL)$ for a few $X$ of small dimension, small $r$ and low cohomological degrees.
These were computed using the Sage Mathematics Software System \cite{sage}, specifically its \texttt{commutative-dga} module.
\end{example}

\begin{table}\centering
\begin{tabular}{RCCCC}
\toprule
i & X =  \PP^2,\ r = 2 & X = \Sigma_1,\ r = 2 & X = \PP^1 \times \PP^1,\ r = 2,\ c = [1: 1] & X = \PP^2,\ r = 3 \\\midrule
 0 & 1 &   1 &  1 &  1 \\
 1 & 1 &   5 &  1 &  1 \\
 2 & 2 &  15 &  4 &  3 \\
 3 & 3 &  29 &  6 &  4 \\
 4 & 1 &  47 &  5 &  1 \\
 5 & 4 &  69 & 16 &  9 \\
 6 & 5 &  94 & 14 & 12 \\
 7 & 3 & 122 & 12 &  7 \\
 8 & 4 & 153 & 28 & 15 \\
 9 & 4 & 187 & 18 & 21 \\
10 & 6 & 224 & 15 & 22 \\
\bottomrule
\end{tabular}
\caption{$H^i(A_r(X, c))$ in small examples, with $c$ omitted if $H^2(X) \isom \QQ$. $\Sigma_1$ denotes a Riemann surface of genus $1$.}
\label{betti-number-table}
\end{table}

\begin{example}\label{non-ample-c1-dependence}
If we do not assume that $c \in \PP H^2(X)$ is represented by an element in the ample cone (ie $c_1(\cL)$ for some ample $\cL$) then $H^*(A_r(X, c))$ \emph{can} depend on $c$.
In fact this is visible in perhaps the simplest possible example: $X = \PP^1 \times \PP^1$ and $r = 2$.
If we choose generators $a$ and $b$ of $H^2(\PP^1 \times \PP^1)$ corresponding to the (integral) orientation classes of the two factors, then a class $pa + qb$ (with $p$, $q$ integers), where we write $c = [p: q]$, is ample iff $p, q > 0$.

As listed in \cref{betti-number-table}, $H^9(A_2(\PP^1 \times \PP^1, [1: 1]))$ (ie with $c = [a + b]$) is $18$ dimensional.
However, $H^9(A_2(\PP^1 \times \PP^1, [1: 0])) \isom \QQ^{19}$.
Similarly $H^{10}(A_2(\PP^1 \times \PP^1, [1: 1])) \isom \QQ^{15}$ while $H^{10}(A_2(\PP^1 \times \PP^1, [1: 0])) \isom \QQ^{17}$.

In fact $A_r(\PP^1 \times \PP^1, c)$ with $c = [pa + qb]$ has only two isomorphism classes: one for $p, q \ne 0$ (which are all isomorphic by arguments similar to the proof of \cref{rational-model-stabilizes}), and one where $c = [a]$ or $c = [b]$ (which are isomorphic under swapping factors).
\end{example}

\section{The h-principle}
\label{sec:h-principle}

Let $(x_1,\dotsc,x_r) \in \F^r(X)$ be $r$ distinct points and $v_i \in (J^1(\cL) - 0)_{x_i}$ be non-zero vectors at these points. To lighten the notation, we will abbreviate the whole tuple by
\[
    \vec{v} \defeq \left( (x_1,v_1),\dotsc,(x_r,v_r) \right) \in (J^1(\cL) - 0)^r \dpunct. 
\]
The space of holomorphic sections with derivatives prescribed by $\vec{v}$ is defined to be:
\[
	\Gammahol(\vec{v}) \defeq \set{f \in \Gammahol(\cL)}{j^1(f)(x_i) = (x_i,v_i) \text{ for each } i= 1, \dotsc, r} \dpunct.
\]
This is an affine subspace of the vector space of global sections $\Gammahol(\cL)$ and contains as an open subspace the previously defined
\[
    U(\vec{v}) \defeq \Gammahol(\vec{x}) \cap U(\cL) \dpunct.
\]
Similarly, we have an analogous space defined by continuous sections of $J^1(\cL)$ with prescribed values at the $x_i$:
\[
    U_{\cC^0}(\vec{v}) \defeq \set{s \in \Gamma_{\cC^0}(J^1(\cL) - 0)}{s(x_i) = (x_i, v_i) \text{ for each } i = 1, \dotsc, r} \subset \Gamma_{\cC^0}(J^1(\cL) - 0) \dpunct.
\]
The jet expansion $j^1$ restricts to these subspaces and the goal of this section is to prove the following result:
\begin{theorem}\label{h-principle-prescribed-derivatives}
For a $d$-jet ample line bundle $\cL$, the jet map
\[ j^1 \from U(\vec{v}) \lra U_{\cC^0}(\vec{v}) \]
induces an isomorphism in integral homology in the range of degrees $* < \frac{d-1}{2} - r$.
\end{theorem}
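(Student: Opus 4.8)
The plan is to run a relative version of the argument in \cite{aumonier_h-principle_2022} (whose Theorem~1.1 is, up to the value of the constant, the case $r=0$ of the statement), replacing the affine space $\Gammahol(\cL)$ of all holomorphic sections by the affine subspace $\Gammahol(\vec v)$ of sections with prescribed $1$-jet at each $x_i$. First I would set up both sides as complements of discriminants: $U(\vec v)$ is the complement in $\Gammahol(\vec v)$ of the locus $\Sigma_\hol$ of sections singular at some point of $X$, and $U_{\cC^0}(\vec v)$ is the complement in the (contractible) affine space $\Gamma_{\cC^0}(\vec v) \defeq \set{s \in \Gamma_{\cC^0}(J^1(\cL))}{s(x_i) = v_i \ \forall i}$ of the locus $\Sigma_{\cC^0}$ of sections vanishing somewhere, with $j^1$ sending one pair to the other. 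Since each $v_i \ne 0$, the singular (resp.\ zero) locus of any section in these affine spaces automatically avoids $\{x_1,\dotsc,x_r\}$, so both discriminants fiber over configurations of points in $X \setminus \{x_1,\dotsc,x_r\}$. (That $\Gammahol(\vec v)$ is non-empty of the expected finite dimension already uses $(2r-1)$-jet ampleness.)

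Next I would compute $\widetilde H_*$ of the two complements by Alexander duality from the Borel--Moore homology of $\Sigma_\hol$ and $\Sigma_{\cC^0}$ --- on the continuous side after the standard finite-dimensional approximation of $\Gamma_{\cC^0}(J^1(\cL))$, exactly as in \cite{aumonier_h-principle_2022} --- and analyse the discriminants by Vassiliev-type conical resolutions $|\Sigma_\hol| \to \Sigma_\hol$ and $|\Sigma_{\cC^0}| \to \Sigma_{\cC^0}$, stratified by the number $m$ of singular points, with $j^1$ covered by a map of resolutions. Over the stratum of a configuration $\{y_1,\dotsc,y_m\} \subset X \setminus \{x_1,\dotsc,x_r\}$, the continuous fiber is an affine subspace of $\Gamma_{\cC^0}(\vec v)$ of codimension exactly $(n+1)m$ with normal data the $\GL$-module $\bigoplus_{j} (J^1(\cL))_{y_j}$; the holomorphic fiber is $\set{f \in \Gammahol(\vec v)}{j^1(f)(y_j)=0 \ \forall j}$, and by $d$-jet ampleness (\cref{defn:jet-ample}) the combined jet evaluation
\[\Gammahol(\cL) \lra \bigoplus_{i=1}^r (J^1(\cL))_{x_i} \ \oplus \ \bigoplus_{j=1}^m (J^1(\cL))_{y_j}\]
is surjective whenever $2r + 2m \le d+1$, so in that range the holomorphic fiber also has codimension exactly $(n+1)m$ with the same normal data. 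Hence for $m \le \tfrac{d+1}{2}-r$ the two resolutions agree stratum by stratum --- same base configuration spaces, same local systems --- and $j^1$ is the identity on these strata.

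Feeding this into the Vassiliev spectral sequences converging to $\widetilde H_*(U(\vec v))$ and $\widetilde H_*(U_{\cC^0}(\vec v))$, the terms coming from the first $\lfloor \tfrac{d+1}{2}-r \rfloor$ strata agree and the comparison map is the identity on them; since the $m$-th stratum only affects $\widetilde H_i$ of the complement for $i$ beyond a bound linear in $m$, this forces $j^1$ to induce an isomorphism on $\widetilde H_i$ for $i < \tfrac{d-1}{2}-r$. The precise constant comes from the same degree bookkeeping as in \cite{aumonier_h-principle_2022}, merely shifted by the $2r$ of the jet-ampleness budget consumed by $\vec v$; this is what turns the $r=0$ range $* < \tfrac{d-1}{2}$ into $* < \tfrac{d-1}{2}-r$.

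The step I expect to be the main obstacle is the \emph{stratum-by-stratum comparison}: one must verify not merely that the holomorphic ``singular at $\{y_1,\dotsc,y_m\}$'' loci have the expected codimension, but that within the jet-ample range they carry the \emph{same} normal bundles and monodromy as their continuous counterparts, so that the two Vassiliev spectral sequences are genuinely isomorphic (not just abstractly of the same size) and the comparison map acts as the identity. This --- together with making the Alexander-duality and finite-approximation bookkeeping on the continuous side precise --- is where essentially all the content lies, and it is exactly where $d$-jet ampleness, with $2r$ of the budget already spent prescribing $\vec v$, enters.
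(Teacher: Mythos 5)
Your proposal follows essentially the same route as the paper's proof: Alexander duality in the affine space $\Gammahol(\vec v)$, a Vassiliev-style conical resolution of the discriminant stratified by the number of singular points in $X\setminus\{x_1,\dotsc,x_r\}$, jet ampleness (with $2r$ of the budget spent on prescribing $\vec v$) to identify the strata as affine bundles over configuration spaces, and a dimension bound on the coned-off deep stratum, yielding exactly the range $* < \frac{d-1}{2}-r$. The only packaging difference is that the paper never resolves a ``continuous discriminant'' directly: it runs the same spectral sequence on a tower of finite-dimensional approximations $U_k(\vec v)$ (holomorphic sections of $J^1(\cL)\tensor\cL^k$ multiplied by conjugate sections of $\cL^k$), compares consecutive stages, and identifies the colimit with $U_{\cC^0}(\vec v)$ via a Stone--Weierstrass theorem with interpolation at the marked points --- which is precisely the ``standard finite-dimensional approximation'' you defer to.
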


The proof follows that of \cite{aumonier_h-principle_2022} but we could not find a direct way of applying the theorem therein. Indeed, there are two main differences. Firstly, we apply a Vassiliev-style argument to an \emph{affine subspace} $\Gammahol(\vec{v}) \subset \Gammahol(\cL)$ of the vector space of global sections. This is in contrast with \cite{aumonier_h-principle_2022} where the full space of global sections is considered. Secondly, a section $f \in \Gammahol(\vec{v})$ can only be singular at points in $X - \{x_1,\dotsc,x_r\}$ because it has non-vanishing derivatives at the special chosen points $x_i$. As the proofs of \cite[Section~3]{aumonier_h-principle_2022} relied on compactness of $X$ which we lose when removing points, we will need to adapt them to our case.

\subsection{Constructing the Vassiliev spectral sequence}

The space $U(\vec{v})$ is an open subset in the complex affine space $\Gammahol(\vec{v})$ of complex dimension $\dim_\bC \Gammahol(\vec{v})$, whose complement we denote by $\Sigma(\vec{v})$. We topologize them using the canonical topology on the ambient complex affine space. By Alexander duality, there is an isomorphism
\[
    \CCH^i(\Sigma(\vec{v})) \cong \widetilde{H}_{2\dim_\bC \Gammahol(\vec{v})-1-i}(U(\vec{v})).
\]
We want to compute the homology of $U(\vec{v})$, but we will equivalently study the compactly supported \v{C}ech cohomology of its complement. This is technically advantageous: the complement admits a filtration by the number of singularities of a section $f \in \Sigma(\vec{v})$, which allows the construction of a spectral sequence à la Vassiliev. In practice, it is easier to work with an auxiliary filtered space mapping properly down to $\Sigma(\vec{v})$ with acyclic fibers. We construct this space and its associated spectral sequence below.

\bigskip

We write $X^\circ := X - \{x_1,\dotsc,x_r\}$ to denote the punctured space. Define
\[
    \Sing(f) := \set{y \in X^\circ}{f \text{ is singular at } y} \subset X^\circ
\]
to be the singular subspace of a section $f \in \Sigma(\vec{v})$. Let $\catF$ be the category whose objects are the finite sets $[n] \defeq \{0,\dotsc,n\}$ for $n \geq 0$ and whose morphisms are \emph{all} maps of sets $[n] \to [m]$. Let $\catTop$ be the category of topological spaces and continuous maps between them. On objects, define the following functor:
\begin{equation*}
\begin{split}
    \fX \colon \catF^\op &\lra \catTop \\
    [n] &\longmapsto \fX[n] := \set{(f,y_0,\dotsc,y_n) \in \Gammahol(\vec{v}) \times (X^\circ)^{n+1}}{\forall i, \ y_i \in \mathrm{Sing}(f)}
\end{split}
\end{equation*}
where $\fX[n]$ is given the subspace topology from $\Gammahol(\vec{v}) \times (X^\circ)^{n+1}$. On morphisms, for a map of sets $g \colon [n] \to [m]$, we define it by:
\begin{equation*}
\begin{split}
    \fX(g) \colon \fX[m] &\lra \fX[n] \\
    (f,y_0,\dotsc,y_m) &\longmapsto (f,y_{g(0)}, \dotsc, y_{g(n)}).
\end{split}
\end{equation*}
For an integer $k \geq 0$, let $\catF_{\leq k}$ be the full sub-category of $\catF$ on objects $[n]$ for $n \leq k$. Write
\[
    |\Delta^n| = \{ (t_0, \dotsc, t_n) \mid \forall i, 0 \leq t_i \leq 1 \text{ and } t_0 + \cdots + t_n = 1 \} \subset \bR^{n+1}
\]
for the standard topological $n$-simplex, and denote by $\partial |\Delta^n|$ its boundary. In particular, the assignment $[n] \mapsto |\Delta^n|$ gives a functor $\catF \to \catTop$. For an integer $j \geq 0$, we define the \emph{$j$-th geometric realization of $\fX$} by the following coend:
\begin{equation*}
R^j\fX := \int^{[n] \in \catF_{\leq j}} \fX[n] \times |\Delta^n|
= \left( \bigsqcup_{0 \leq n \leq j} \fX[n] \times |\Delta^n| \right) / \sim
\end{equation*}
where the equivalence relation $\sim$ is generated by $(\fX(g)(z),t) \sim (z,g_*(t))$ for all maps $g \colon [n] \to [m]$ in $\catF$. Here $g_* \colon |\Delta^n| \to |\Delta^m|$ denotes the usual map induced on the simplices by functoriality. Note that the main difference between our construction and the geometric realisation of a simplicial space resides in the fact that we allow all maps of sets, in particular the permutations $[n] \to [n]$ are morphisms in $\catF$.

\bigskip

As for simplicial spaces, $R^j\fX$ is obtained from $R^{j-1}\fX$ via a pushout diagram along a subspace, which can be thought of as a kind of latching object. More precisely, define
\begin{equation*}
    L_j \defeq \set{ (f,y_0, \dotsc, y_j) \in \Gammahol(\vec{v}) \times (X^\circ)^{j+1} }{ \exists l \neq k \text{ such that } y_l = y_k } \subset \fX[j]
\end{equation*}
topologized as a subspace of $\fX[j]$, and write $L_j \times_{\fS_{j+1}} |\Delta^j|$ for the quotient space of $L_j \times |\Delta^j|$ by the symmetric group $\fS_{j+1}$ acting on $L_j$ by permuting the singularities $y_i$, and on $|\Delta^j|$ by permuting the coordinates. The following result is immediate from the definitions.
\begin{lemma}[{\cite[Proposition 3.3]{aumonier_h-principle_2022}}]\label{lemma:resolution-skeletal-pushout}
There is a pushout square of topological spaces:
\[
% https://tikzcd.yichuanshen.de/#N4Igdg9gJgpgziAXAbVABwnAlgFyxMJZABgBpiBdUkANwEMAbAVxiRAB12GYAzHACgAyAfQBWAAk54AtvGHBOPAMrzRAagCMAXy3iAPpwAiMBjjoA9UXsnsATlgDmACxwBKGwCNHAYyZob3Hz8NjwAGsiiFDYycgrsyqqaOjZodLZ4jPpGJmaWBnaOLq4gWqTomLj4hCga5FS0jCxsAErmwKIAtNqKoSVlIBjYeAREtRr19MysiCCtoj0AdH3lQ1VEZOPUk00zPRFRUliycPKKKu1JuvnGphZWJfUwUA7wRKA8thDSSGQgOBBIADM1DMWAYbCcEAgAGtliAPl8fiCAYhtP0Ed9UcikAAmUrvT6Y4F-FE46gMOgeEwABQqw2qIHszhwDy0QA
\begin{tikzcd}
	{\left(L_j \times_{\fS_{j+1}} |\Delta^j| \right) \bigcup \left( \fX[j] \times_{\fS_{j+1}} \partial |\Delta^j|\right)} \arrow[d, hook] \arrow[r] & R^{j-1}\fX \arrow[d] \\
	{\fX[j] \times_{\fS_{j+1}} |\Delta^j|} \arrow[r]                                                                                                & R^j\fX
\end{tikzcd}\]
where the left vertical map is a closed embedding. \qed
\end{lemma}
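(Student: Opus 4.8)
The plan is to read the pushout off the coend formula for $R^j\fX$, exactly as one proves the skeletal filtration of the geometric realization of a semisimplicial space; the only new feature is that the indexing category $\catF$ contains \emph{all} maps of finite sets, in particular the permutations of $[j]$, and it is precisely these that force the quotients by $\fS_{j+1}$ to appear. So the argument is a completely formal bookkeeping exercise, and indeed the authors point out it follows that of \cite[Proposition~3.3]{aumonier_h-principle_2022}.

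First I would present $R^j\fX$ as the quotient of $R^{j-1}\fX \sqcup \bigl(\fX[j]\times|\Delta^j|\bigr)$ by the relations $(\fX(g)(z),t)\sim(z,g_*(t))$ coming from those morphisms $g$ of $\catF_{\leq j}$ that have $[j]$ as source or target, since $\catF_{\leq j}$ is $\catF_{\leq j-1}$ with the single object $[j]$ adjoined together with all maps into and out of it. Factoring each such $g$ as an epimorphism followed by a monomorphism and using that $\fX$ and the simplex functor $[n]\mapsto|\Delta^n|$ are functors, it is enough to analyse the relations coming from epis and from monos separately. In $\catF_{\leq j}$ any epimorphism onto $[j]$ from a set of size $\leq j+1$ is a bijection, while an epimorphism $\sigma$ \emph{out of} $[j]$ onto a strictly smaller set is non-injective, so $\fX(\sigma)$ lands in the subspace $L_j$ of tuples with a repeated point; hence the relations from epimorphisms either are permutations of $[j]$ or identify a point of $L_j\times|\Delta^j|$ with its image in $R^{j-1}\fX$. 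Dually, a monomorphism into $[j]$ is either a bijection or has a proper face as image, so the relations from monomorphisms either are permutations or identify a point of $\fX[j]\times\partial|\Delta^j|$ with its image in $R^{j-1}\fX$. Collecting the permutation relations into the $\fS_{j+1}$-quotient, this exhibits $R^j\fX$ as the pushout of $R^{j-1}\fX$ along the $\fS_{j+1}$-quotient of $A\defeq\bigl(L_j\times|\Delta^j|\bigr)\cup\bigl(\fX[j]\times\partial|\Delta^j|\bigr)\subset\fX[j]\times|\Delta^j|$, which is the source $(L_j\times_{\fS_{j+1}}|\Delta^j|)\cup(\fX[j]\times_{\fS_{j+1}}\partial|\Delta^j|)$ of the left vertical map; the universal property is immediate, as a map out of $R^j\fX$ is the same datum as a map out of $R^{j-1}\fX$ together with an $\fS_{j+1}$-equivariant map out of $\fX[j]\times|\Delta^j|$ that agree after restriction to $A$.

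Finally I would check the point-set assertions. The subspace $L_j$ is closed in $\fX[j]$, being the union of the closed loci $\{y_l=y_k\}$, and $\partial|\Delta^j|$ is closed, so $A$ is a closed, $\fS_{j+1}$-invariant subspace of $\fX[j]\times|\Delta^j|$; since $\fS_{j+1}$ is finite the quotient map $\fX[j]\times|\Delta^j|\to\fX[j]\times_{\fS_{j+1}}|\Delta^j|$ is closed and restricts to a closed embedding $A/\fS_{j+1}\hookrightarrow\fX[j]\times_{\fS_{j+1}}|\Delta^j|$, which is the left vertical map. Moreover the $\fS_{j+1}$-action is free on the complement $(\fX[j]\setminus L_j)\times\mathrm{int}|\Delta^j|$, being already free on the first factor, so that complement maps homeomorphically onto the ``new cells'' $R^j\fX\setminus R^{j-1}\fX$, confirming the picture. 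The only real work, modest as it is, is the bookkeeping in the middle paragraph: one must be certain that the equivalence relation generated by the coend moves creates \emph{no} identification between non-degenerate top configurations beyond an honest permutation of the marked singular points $y_0,\dots,y_j$ — this is the one place where $\catF$ behaves differently from $\Delta$ — but the epi–mono factorization reduces this to the observation that $\catF_{\leq j}$ has no surjection onto $[j]$ from a strictly smaller object. Everything is parallel to \cite[Proposition~3.3]{aumonier_h-principle_2022}, whose argument applies verbatim with $X^\circ$ replacing $X$ and the affine space $\Gammahol(\vec v)$ replacing $\Gammahol(\cL)$.
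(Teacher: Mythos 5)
Your argument is correct and is essentially the same formal coend/skeletal-filtration bookkeeping that the paper has in mind: the paper offers no proof at all beyond citing \cite[Proposition~3.3]{aumonier_h-principle_2022} and calling the statement immediate from the definitions. Your epi--mono factorization, the observation that $\catF_{\leq j}$ has no surjection onto $[j]$ from a smaller object (so interior non-degenerate points are only ever identified by permutations), and the closed-embedding check via the finite-group quotient are exactly the right verifications.
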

Using the lemma inductively, one sees that the spaces $R^j\fX$ are paracompact and Hausdorff, and that the natural map $R^{j-1}\fX \to R^j\fX$ is a closed embedding. Another direct consequence is the following homeomorphism:
\begin{equation}\label{eqn:differencestepsfiltration}
    R^j\fX - R^{j-1}\fX \cong Y_j \times_{\fS_{j+1}} |\mathring{\Delta^j}|
\end{equation}
where
\begin{equation*}
Y_j := \set{ (f,y_0, \dotsc, y_j) \in \fX[j]}{y_l \neq y_k \text{ if } l \neq k} = \fX[j] - L_j \subset \fX[j]
\end{equation*}
is the subspace of $\fX[j]$ where the singularities are pairwise distinct, and $|\mathring{\Delta^j}|$ is the interior of the simplex.
The next lemma is stated in \cite[Lemma~3.5]{aumonier_h-principle_2022} but its proof needs to be adapted here to account for the fact that $X^\circ$ is not compact.
\begin{lemma}\label{lemma:rhonisproper}
For any $n \geq 0$, the map $\rho_n \colon \fX[n] \to \Gammahol(\vec{v})$ given by $(f,y_0,\dotsc,y_n) \mapsto f$ is a proper map.
\end{lemma}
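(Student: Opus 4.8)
The plan is to verify properness directly. Fix a compact set $K \subseteq \Gammahol(\vec{v})$; I will show $\rho_n^{-1}(K)$ is compact by exhibiting it as a closed subset of the compact space $K \times X^{n+1}$. Since $X$ is a compact complex manifold and $\Gammahol(\vec{v})$ is a finite-dimensional complex affine space, every space in sight is metrizable, so it is enough to check closedness using convergent sequences. Note that $\rho_n^{-1}(K)$ sits inside $K \times X^{n+1}$ via $\fX[n] \subseteq \Gammahol(\vec{v}) \times (X^\circ)^{n+1} \subseteq \Gammahol(\vec{v}) \times X^{n+1}$.

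So let $(f_k, y_0^{(k)}, \dotsc, y_n^{(k)})_{k \ge 0}$ be a sequence in $\rho_n^{-1}(K)$ that converges, in $K \times X^{n+1}$, to a point $(f, y_0, \dotsc, y_n)$; I must show this limit again lies in $\rho_n^{-1}(K)$. By definition of $\fX[n]$ we have $j^1(f_k)(y_i^{(k)}) = 0$ for all $k$ and all $i$. Point-evaluation of first jets, $\Gammahol(\cL) \times X \to J^1(\cL)$, $(g, x) \mapsto j^1(g)(x)$, is continuous (immediate from local triviality of $J^1(\cL)$), so passing to the limit gives $j^1(f)(y_i) = 0$ for every $i$.

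The one step where the punctured space $X^\circ$ genuinely enters is the claim that each $y_i$ lies in $X^\circ$, i.e. $y_i \notin \{x_1, \dotsc, x_r\}$. Indeed, if $y_i = x_j$ for some $j$, then the previous paragraph gives $j^1(f)(x_j) = 0$; but $f \in \Gammahol(\vec{v})$ forces $j^1(f)(x_j) = (x_j, v_j)$, which is nonzero because $(x_j, v_j)$ is a point of $(J^1(\cL) - 0)^r$. This contradiction shows $y_i \in X^\circ$ for all $i$, hence $(f, y_0, \dotsc, y_n) \in \fX[n]$, and since $f \in K$ this point belongs to $\rho_n^{-1}(K)$. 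Thus $\rho_n^{-1}(K)$ is closed in the compact space $K \times X^{n+1}$, therefore compact, so $\rho_n$ is proper.

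I do not expect a serious obstacle here: the entire content is the observation that a section in $\Gammahol(\vec{v})$ has a prescribed \emph{nonzero} first jet at each $x_j$, so its singular points are bounded away from the punctures and cannot escape to them in a limit. The only minor points requiring care are the reduction to sequences (valid by metrizability) and the continuity of point-evaluation of jets, both of which are routine.
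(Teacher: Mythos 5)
Your proof is correct, but it packages the key point differently from the paper. Both arguments hinge on the same observation — a section $f \in \Gammahol(\vec{v})$ has the prescribed \emph{nonzero} jet $v_j$ at each $x_j$, so its singular set cannot meet the punctures — but the paper turns this into a \emph{uniform} statement: it applies the tube lemma to the open set $\set{(f,x) \in K \times X}{x \notin \Sing(f)}$, which contains $K \times \{x_1,\dotsc,x_r\}$, to produce a single neighborhood $V$ of the punctures avoided by the singularities of \emph{every} $f \in K$; then $\rho_n^{-1}(K)$ is closed in the compact set $K \times (X - V)^{n+1}$. You instead prove directly that $\rho_n^{-1}(K)$ is closed in $K \times X^{n+1}$, checking sequential closedness and ruling out limit singular points at the $x_j$ by continuity of jet evaluation. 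Your route is more elementary (no tube lemma) at the mild cost of invoking metrizability to reduce to sequences and spelling out continuity of $(g,x) \mapsto j^1(g)(x)$, which is indeed routine since $\Gammahol(\cL)$ is finite dimensional; the paper's route avoids any point-set hypotheses beyond compactness and yields the slightly stronger uniform separation of $\Sing(f)$ from the punctures over $K$, though nothing later in the argument needs that extra strength. Both are complete proofs of the lemma.
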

\begin{proof}
Let $K \subset \Gammahol(\vec v)$ be compact. Then
\[\set{(f, x) \in K \times X}{x \notin \Sing(f)}\]
is open in $K \times X$ and contains $K \times \{x_i\}$ for $i = 1, \dotsc, r$.
By the tube lemma, it must contain some $K \times V$, where $V \subset X$ is a neighborhood of $\{x_1, \dotsc, x_r\}$.
Then $\rho_n^{-1}(K)$ is a closed subset of $K \times (X - V)^{n + 1}$ and hence is compact.
\end{proof}
The natural projections maps $\fX[n] \times |\Delta^n| \to \fX[n] \tolabel{\rho_n} \Gammahol(\vec{v})$ give rise to a map from the geometric realization $\tau_j \colon R^j\fX \to \Sigma(\vec{v})$. The proof of \cite[Lemma~3.6]{aumonier_h-principle_2022} using the adapted Lemma~\ref{lemma:rhonisproper} above then shows that:
\begin{lemma}
For any integer $j \geq 0$, the map $\tau_j \colon R^j\fX \to \Sigma(\vec{v})$ is proper. \qed
\end{lemma}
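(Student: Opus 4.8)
The plan is to unwind the coend definition of $R^j\fX$ and reduce properness of $\tau_j$ to properness of the maps $\rho_n$, which is exactly \cref{lemma:rhonisproper}; after that the argument is purely formal, mirroring \cite[Lemma~3.6]{aumonier_h-principle_2022}.

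First I would record the two structural facts needed. On the one hand, $R^j\fX$ is the quotient of the \emph{finite} disjoint union $\bigsqcup_{0 \le n \le j} \fX[n] \times |\Delta^n|$ by the coend relations, so the canonical map $q \colon \bigsqcup_{0 \le n \le j} \fX[n] \times |\Delta^n| \to R^j\fX$ is continuous and surjective. On the other hand, every structure map $\fX(g)$ and every $g_*$ leaves the section-coordinate $f$ untouched, so the maps $\fX[n] \times |\Delta^n| \to \Sigma(\vec v)$, $(f,y_0,\dotsc,y_n,t) \mapsto f$, are compatible with $\sim$; this is what makes $\tau_j$ well defined, and it identifies $\tau_j \circ q$ on the summand indexed by $[n]$ with the composite $\fX[n] \times |\Delta^n| \xrightarrow{\pr} \fX[n] \xrightarrow{\rho_n} \Sigma(\vec v)$.

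Now let $K \subset \Sigma(\vec v)$ be compact. Combining the two facts above gives $q^{-1}(\tau_j^{-1}(K)) = \bigsqcup_{0 \le n \le j} \bigl(\rho_n^{-1}(K) \times |\Delta^n|\bigr)$. By \cref{lemma:rhonisproper} each $\rho_n^{-1}(K)$ is compact, each $|\Delta^n|$ is compact, and the index set is finite, so $q^{-1}(\tau_j^{-1}(K))$ is compact. Since $q$ is surjective, $\tau_j^{-1}(K) = q\bigl(q^{-1}(\tau_j^{-1}(K))\bigr)$ is the continuous image of a compact space, hence compact; this is precisely the assertion that $\tau_j$ is proper.

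I do not anticipate a genuine obstacle here: the only nontrivial input, and the sole place where the non-compactness of $X^\circ = X - \{x_1,\dotsc,x_r\}$ intervenes, is \cref{lemma:rhonisproper}, which has already been handled via the tube lemma; the rest is bookkeeping about finite colimits and compact images.
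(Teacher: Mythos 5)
Your argument is correct and is essentially the paper's own route: the paper simply defers to the proof of \cite[Lemma~3.6]{aumonier_h-principle_2022}, which runs exactly as you describe, writing $R^j\fX$ as a quotient of the finite disjoint union $\bigsqcup_{0 \le n \le j} \fX[n] \times |\Delta^n|$ and feeding in the properness of the maps $\rho_n$ (the adapted \cref{lemma:rhonisproper}, which is the only place the non-compactness of $X^\circ$ matters). Your identification of $q^{-1}(\tau_j^{-1}(K))$ with $\bigsqcup_n \rho_n^{-1}(K) \times |\Delta^n|$ and the passage to the compact image under the surjection $q$ are exactly the intended bookkeeping.
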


\bigskip

We are now half-way through the construction of a replacement of $\Sigma(\vec{v})$: the space $R^j\fX$ maps properly down to it, but the fibers are not all acyclic. Indeed, the fiber $\tau_j^{-1}(f)$ above a section $f \in \Sigma(\vec{v})$ that has at most $j+1$ singularities is a (possibly degenerate) $j$-simplex whose vertices are indexed by the singular points of $f$. Hence its homology vanishes in positive degrees. On the contrary, if $f$ has at least $j+2$ singularities the fiber is not contractible nor acyclic in general. This issue is fixed by the following construction.
Let $N \geq 0$ be an integer and let
\[
    \Sigma(\vec{v})_{\geq N+2} := \set{ f \in \Gammahol(\vec{v}) }{ \#\Sing(f)\geq N+2 } \subset \Sigma(\vec{v})
\]
be the subspace of those sections with at least $N+2$ singular points. We denote by $\overline{\Sigma(\vec{v})_{\geq N+2}}$ its closure in $\Sigma(\vec{v})$. Define $R^N_\text{cone}\fX$ by the following homotopy pushout:
\begin{equation*}
% https://tikzcd.yichuanshen.de/#N4Igdg9gJgpgziAXAbVABwnAlgFyxMJZABgBpiBdUkANwEMAbAVxiRAB12IaYAnBrGBjAABJxx0mAfQByAPWABaAIwBfABScAylgDmAWzpTgnXTACOImQGoATKoCUI1SFWl0mXPkIpl5KrSMLGwASnIynABmABqu7iAY2HgERGTKAfTMrIgcXDz8gsLaeobGphZWdqoubh5J3kR+6dSZwTlhMmXsODAAHjjAAMYEMNVR0QB0rgEwUGYIKKCRvBD6SGQgOBBIti1B2bkS0jIg1Ax0AEYwDAAKnsk+ILx6ABY4cUsra4gbW0h+mzoWAYbBeEAgAGsPiBlqt-tQ-ogAMy1GFfHYI7bI1QUVRAA
\begin{tikzcd}
\tau_N^{-1}\left(\overline{\Sigma_{\geq N+2}}\right) \arrow[d, "\tau_N"'] \arrow[r, hook] \arrow[dr, phantom, "^\mathrm{ho}\ulcorner", very near end] & R^N\fX \arrow[d]      \\
\overline{\Sigma_{\geq N+2}} \arrow[r]                                           & R^N_{\text{cone}}\fX.
\end{tikzcd}
\end{equation*}
The three other spaces map to $\Sigma(\vec{v})$, thus yielding a map from the homotopy pushout $\pi \colon R^N_\text{cone}\fX \to \Sigma(\vec{v})$.

\begin{proposition}[{\cite[Lemma~3.8, Lemma~3.9, Proposition~3.10]{aumonier_h-principle_2022}}]\label{prop:acyclicresolution}
The space $R^N_\text{cone}\fX$ is paracompact, locally compact and Hausdorff. The map $\pi \colon R^N_\text{cone}\fX \to \Sigma(\vec{v})$ is proper and induces an isomorphism in cohomology with compact supports. \qed
\end{proposition}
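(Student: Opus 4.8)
The plan is to follow \cite[Section~3]{aumonier_h-principle_2022} step by step; since the combinatorics of the realization $R^N\fX$ and of the coning construction are unchanged, the only point needing genuine attention is that $X^\circ$ is non-compact, which we have already accommodated in \cref{lemma:rhonisproper}.

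\emph{Point-set topology.} First I would show, by induction on $j$ using the pushout square of \cref{lemma:resolution-skeletal-pushout}, that each $R^j\fX$ is paracompact, locally compact and Hausdorff: the base $R^0\fX = \fX[0]$ is a closed subspace of $\Gammahol(\vec{v}) \times X^\circ$, which is locally compact Hausdorff ($\Gammahol(\vec v)$ is finite-dimensional affine and $X^\circ$ is open in a compact manifold), and the inductive step attaches such a space along the closed embedding of \cref{lemma:resolution-skeletal-pushout}; metrizability is preserved throughout. The space $R^N_{\text{cone}}\fX$ is the homotopy pushout of the span $\overline{\Sigma(\vec v)_{\ge N+2}} \hookleftarrow \tau_N^{-1}(\overline{\Sigma(\vec v)_{\ge N+2}}) \hookrightarrow R^N\fX$, modeled as a double mapping cylinder; both legs are closed embeddings ($\overline{\Sigma(\vec v)_{\ge N+2}}$ is closed in $\Sigma(\vec v)$ and $\tau_N$ is continuous, and the left leg is the restriction of $\tau_N$), so this double mapping cylinder is again paracompact, locally compact and Hausdorff. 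For properness of $\pi$: $\tau_N$ is proper (already established via \cref{lemma:rhonisproper}), and $\overline{\Sigma(\vec v)_{\ge N+2}} \hookrightarrow \Sigma(\vec v)$ is proper as a closed embedding, so for compact $K \subset \Sigma(\vec v)$ the preimage $\pi^{-1}(K)$ is glued from the compact sets $\tau_N^{-1}(K)$, $\tau_N^{-1}(\overline{\Sigma(\vec v)_{\ge N+2}} \cap K) \times [0,1]$ and $\overline{\Sigma(\vec v)_{\ge N+2}} \cap K$, hence compact.

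\emph{The compactly supported cohomology isomorphism.} Since $\pi$ is a proper map of metrizable, locally compact Hausdorff spaces we have $R\pi_! \simeq R\pi_*$ and a Leray spectral sequence
\[ E_2^{p,q} = \CCH^p\bigl(\Sigma(\vec{v}); R^q\pi_*\ZZ\bigr) \Longrightarrow \CCH^{p+q}\bigl(R^N_{\text{cone}}\fX\bigr), \qquad (R^q\pi_*\ZZ)_f \cong H^q(\pi^{-1}(f);\ZZ). \]
It therefore suffices to check that every fiber of $\pi$ is acyclic, so that $R\pi_*\ZZ \simeq \ZZ$ and the edge map $\pi^*$ is the asserted isomorphism. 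Over the open subset $\Sigma(\vec v) \setminus \overline{\Sigma(\vec v)_{\ge N+2}}$ the map $\pi$ coincides with $\tau_N$; for such $f$ one has $\#\Sing(f) \le N+1$, so $\pi^{-1}(f)$ is the realization attached to the \emph{finite}, non-empty set $\Sing(f)$ — a full simplex $|\Delta^{\#\Sing(f)-1}|$, hence contractible. For $f \in \overline{\Sigma(\vec v)_{\ge N+2}}$, unwinding the double mapping cylinder over the point $f$ identifies $\pi^{-1}(f)$ with the unreduced cone on $\tau_N^{-1}(f)$, again contractible — and this is precisely the role of the coning: it repairs the fibers over sections with $\ge N+2$ singularities, where $\tau_N^{-1}(f)$ is only the $N$-skeleton of a large simplex and is not acyclic.

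\emph{Main obstacle.} The only genuinely new difficulty relative to \cite{aumonier_h-principle_2022} is that dropping compactness of $X^\circ$ threatens properness of $\tau_N$, hence of $\pi$, and with it both the point-set conclusions and the use of the proper Leray spectral sequence. This is exactly what \cref{lemma:rhonisproper} (via the tube lemma) resolves; once properness is in hand, every step above proceeds as in the compact case treated in \cite{aumonier_h-principle_2022}.
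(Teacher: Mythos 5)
Your proposal is correct and takes essentially the same approach as the paper, which proves this proposition simply by invoking \cite[Lemmas~3.8, 3.9 and Proposition~3.10]{aumonier_h-principle_2022} once the single new difficulty --- properness of $\tau_N$ despite the non-compactness of $X^\circ$ --- has been handled by \cref{lemma:rhonisproper}. Your fiber analysis (full simplices over $\Sigma(\vec{v}) - \overline{\Sigma(\vec{v})_{\geq N+2}}$, cones over $\overline{\Sigma(\vec{v})_{\geq N+2}}$) and the proper-map-with-acyclic-fibers argument for compactly supported cohomology reconstruct exactly the content of those cited results, and you correctly identify the tube-lemma properness statement as the only point requiring adaptation.
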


\bigskip

Using the closed embeddings $R^{j-1}\fX \subset R^j\fX$ obtained in \cref{lemma:resolution-skeletal-pushout}, we define the following filtration on $R^N_{\text{cone}}\fX$:
\[
    F_0 = R^0\fX \subset F_1 = R^1\fX \subset \cdots \subset F_N = R^N\fX \subset F_{N+1} = R^N_{\text{cone}}\fX.
\]
By standard arguments on spectral sequences associated to filtered complexes, we obtain:
\begin{proposition}[{\cite[Proposition~3.11]{aumonier_h-principle_2022}}]\label{prop:existenceVassilievSS}
There is a spectral sequence on the first quadrant $s,t \geq 0$:
\[
E_1^{s,t} = \CCH^{s+t}(F_s - F_{s-1}; \bZ) \implies \CCH^{s+t}(R^N_\text{cone}\fX; \bZ) \cong \widetilde{H}_{2\dim_\bC \Gammahol(\vec{v}) - 1 - s - t}(U(\vec{v}); \bZ).
\]
The differential $d^r$ on the $r$-th page of the spectral sequence has bi-degree $(r,1-r)$, ie it is a morphism $d_r^{s,t} \colon E_r^{s,t} \to E_r^{s+r,t-r+1}$. \qed
\end{proposition}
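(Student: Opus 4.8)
The plan is to run the standard machinery of the spectral sequence of a filtered space, with compactly supported Čech cohomology in the role of the (co)homology theory, and then to read off the abutment from the identifications already established. First I would record the point-set facts that make everything work: the filtration $F_{-1} = \emptyset \subset F_0 \subset \cdots \subset F_N \subset F_{N+1} = R^N_{\mathrm{cone}}\fX$ is a \emph{finite} filtration by \emph{closed} subspaces of a locally compact Hausdorff space. Closedness of $F_{s-1} \subset F_s$ for $s \le N$ is part of \cref{lemma:resolution-skeletal-pushout} and its inductive consequences, while closedness of $F_N = R^N\fX$ inside $R^N_{\mathrm{cone}}\fX$ together with the ambient local compactness and Hausdorffness come from \cref{prop:acyclicresolution}.

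Next, for each $s$ I would invoke the long exact sequence attached to the open--closed decomposition $F_s = (F_s - F_{s-1}) \sqcup F_{s-1}$ in compactly supported cohomology,
\[
    \cdots \to \CCH^{i}(F_s - F_{s-1}) \to \CCH^{i}(F_s) \to \CCH^{i}(F_{s-1}) \to \CCH^{i+1}(F_s - F_{s-1}) \to \cdots,
\]
which is available precisely because the strata are locally compact Hausdorff. These sequences assemble into an exact couple whose $D$-terms are $\CCH^{s+t}(F_s)$ and whose $E_1$-terms are $\CCH^{s+t}(F_s - F_{s-1})$; the associated spectral sequence is the one claimed. The remaining bookkeeping is routine: the connecting homomorphism raises both the total degree and the filtration degree by $1$ while the other two maps preserve total degree, so on the $r$th page the differential has bidegree $(r, 1-r)$, and since the filtration has only finitely many steps the spectral sequence converges strongly to $\CCH^{*}(F_{N+1}) = \CCH^{*}(R^N_{\mathrm{cone}}\fX)$. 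This is exactly \cite[Proposition~3.11]{aumonier_h-principle_2022}, and the same argument applies verbatim.

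To put the abutment into the stated form, I would then chain the two isomorphisms already in hand: \cref{prop:acyclicresolution} gives $\CCH^{*}(R^N_{\mathrm{cone}}\fX) \cong \CCH^{*}(\Sigma(\vec v))$ via the proper map $\pi$, and the Alexander duality isomorphism recalled earlier gives $\CCH^{i}(\Sigma(\vec v)) \cong \widetilde{H}_{2\dim_{\bC}\Gammahol(\vec v) - 1 - i}(U(\vec v))$.

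I do not expect a genuine obstacle here: every nontrivial ingredient (properness of $\pi$, the acyclic-resolution property, and the point-set niceness of the spaces $R^j\fX$) has already been dispatched in the preceding lemmas. The only points demanding a little care are checking that compactly supported Čech cohomology really is the theory for which the open--closed long exact sequence holds on these spaces --- it is, by their local compactness and Hausdorffness --- and tracking the degree conventions so that the differential emerges with bidegree $(r, 1-r)$ rather than its transpose.
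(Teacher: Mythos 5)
Your proposal is correct and follows essentially the same route as the paper, which likewise obtains the spectral sequence by the standard exact-couple argument for the finite filtration by closed subspaces (with the point-set hypotheses supplied by \cref{lemma:resolution-skeletal-pushout,prop:acyclicresolution}) and then identifies the abutment by chaining the compactly supported cohomology isomorphism of \cref{prop:acyclicresolution} with the Alexander duality isomorphism stated at the start of the section. Nothing is missing; the degree bookkeeping and the use of the open--closed long exact sequences on locally compact Hausdorff strata are exactly the "standard arguments on spectral sequences associated to filtered complexes" the paper invokes.
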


\subsection{Analyzing the spectral sequence}

In this section, we describe the terms on the $E_1$-page of the spectral sequence given in Proposition~\ref{prop:existenceVassilievSS}. The constructions of the previous section and the spectral sequence depend on an integer $N$ that we are a priori free to choose. We will follow the following convention for the remainder of this article:
\begin{convention}\label{convention:bigN}
Let $N$ be the largest integer such that $\cL$ is $(2(r+N+1) -1)$-jet ample.
\end{convention}

We will consider two cases separately:
\begin{enumerate}
\item When $0 \leq s \leq N$, we have $F_s - F_{s-1} = R^s\fX - R^{s-1}\fX$ whose cohomology can be understood using the homeomorphism~\eqref{eqn:differencestepsfiltration}.
\item When $s = N+1$, we have $F_s - F_{s-1} = R^N_\text{cone}\fX - R^N\fX$ whose cohomology we will bound.
\end{enumerate}
Note that outside the band $0 \leq s \leq N+1$, all the terms $E_1^{s,t}$ vanish as the filtration giving rise to the spectral sequence is indexed from $0$ to $N+1$.

\subsubsection{Cohomology in the columns \texorpdfstring{$0 \leq s \leq N$}{0 ≤ s ≤ N}}

We first treat the cases where $0 \leq s \leq N$. Let us recall the homeomorphism~\eqref{eqn:differencestepsfiltration}:
\[
F_s - F_{s-1} = R^s\fX - R^{s-1}\fX \cong \set{ (f,y_0, \dotsc, y_s) \in \fX[s] }{ y_l \neq y_k \text{ if } l \neq k } \times_{\fS_{s+1}} \mathring{|\Delta^s|}
\]
where the symmetric group $\fS_{s+1}$ acts on the left by permuting the singular points $y_i$ and on the right by permuting the vertices of the simplex. There is a natural map down to the configuration space $\F^{s+1}(X^\circ)$ which forgets the simplex and the section $f$. An argument similar to that of \cite[Section~4.1]{aumonier_h-principle_2022} then shows that this map is a fiber bundle.
\begin{lemma}
For $0 \leq s \leq N$, the natural map
\begin{equation*}
\begin{split}
	F_s - F_{s-1} \cong \set{ (f,y_0, \dotsc, y_s) \in \fX[s] }{ y_l \neq y_k \text{ if } l \neq k } \times_{\fS_{s+1}} \mathring{|\Delta^s|} &\lra \F^{s+1}(X^\circ) \\
	[ (f,y_0,\dotsc,y_s), \lambda ] &\longmapsto \left\{ y_0, \dotsc, y_s \right\}
\end{split}
\end{equation*}
is a fiber bundle. The fiber above a point $\vec{y} = \left\{ y_0, \dotsc, y_s \right\} \in \F^{s+1}(X^\circ)$ is the product
\[
\mathring{|\Delta^s|} \times \left\{ f \in \Gammahol(\vec{v}) \mid \vec{y} \subset \Sing(f) \right\},
\]
where the right-hand term is the complex affine subspace of $\Gammahol(\vec{v})$ of those sections having all points in $\vec{y}$ as singularities. It is of complex dimension $\dim_\bC \Gammahol(\vec{v}) - (s+1)(\dim_\bC X +1)$.
\end{lemma}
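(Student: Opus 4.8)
The plan is to reduce, in two harmless steps, to a statement about a single affine bundle map, and then extract local triviality from jet ampleness. First, recall from \eqref{eqn:differencestepsfiltration} that $F_s - F_{s-1} = Y_s \times_{\fS_{s+1}} \mathring{|\Delta^s|}$, and that the displayed map is the quotient by $\fS_{s+1}$ of the projection $Y_s \times \mathring{|\Delta^s|} \to \F^{s+1}(X^\circ)$, $((f,y_0,\dotsc,y_s),\lambda) \mapsto (y_0,\dotsc,y_s)$, which is $\fS_{s+1}$-equivariant for the permutation actions on both sides. Since $\fS_{s+1}$ acts freely on $\F^{s+1}(X^\circ)$, it is enough to prove that this equivariant projection is a fiber bundle; the factor $\mathring{|\Delta^s|}$ sits throughout as a global trivial factor, so it further suffices to treat $Y_s \to \F^{s+1}(X^\circ)$, $(f, y_0,\dotsc,y_s) \mapsto (y_0,\dotsc,y_s)$, whose fiber over $\vec y$ is by definition $\{f \in \Gammahol(\vec v) : \vec y \subset \Sing(f)\}$.

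The key observation is that $Y_s$ is cut out by a fiberwise affine bundle map. Writing $\pr_i \colon \F^{s+1}(X^\circ) \to X$ for the $i$th coordinate, consider the map of bundles over $\F^{s+1}(X^\circ)$
\[
\Psi \colon \Gammahol(\vec v) \times \F^{s+1}(X^\circ) \lra \bigoplus_{i=0}^{s} \pr_i^* J^1(\cL), \qquad (f, y_0,\dotsc,y_s) \longmapsto \big(j^1(f)(y_0), \dotsc, j^1(f)(y_s)\big),
\]
which is affine on each fiber, with linear part the restriction of $g \mapsto (j^1(g)(y_i))_i$ to the vector space $\Gammahol(0) \defeq \{g \in \Gammahol(\cL) : j^1(g)(x_j) = 0,\ j = 1,\dotsc,r\}$ on which $\Gammahol(\vec v)$ is modeled. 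Since $f$ is singular at $y$ precisely when $j^1(f)(y) = 0$, we have $Y_s = \Psi^{-1}(\text{zero section})$. Granting that $\Psi$ is \emph{fiberwise surjective}, the preimage of the zero section of a fiberwise surjective affine bundle map out of a trivial bundle is a (complex affine) sub-bundle: locally over the base one trivializes the target and chooses a continuous family of linear right inverses of the (constant-rank) linear part, exhibiting $\Psi^{-1}(0)$ locally as a product with the kernel. This yields the fiber bundle, its fiber an affine space of complex dimension $\dim_\bC \Gammahol(\vec v) - (s+1)(n+1)$ by the rank--nullity theorem. Reinstating the $\mathring{|\Delta^s|}$ factor and descending along the free $\fS_{s+1}$-action then gives the statement, fiber and all.

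The crux is therefore the fiberwise surjectivity of $\Psi$, which is exactly what jet ampleness provides, and it is the single place where \cref{convention:bigN} (equivalently the bound on $s$) is used. Fix $\vec y = (y_0,\dotsc,y_s) \in \F^{s+1}(X^\circ)$; since each $y_i$ lies in $X^\circ = X \setminus \{x_1,\dotsc,x_r\}$, the $r + s + 1$ points $x_1,\dotsc,x_r,y_0,\dotsc,y_s$ are pairwise distinct. Assigning each the weight $2$ gives total weight $2(r+s+1)$, and the hypothesis $s \le N$ together with \cref{convention:bigN} guarantees that $\cL$ is $(2(r+s+1)-1)$-jet ample ($\cL$ is $(2(r+N+1)-1)$-jet ample by the choice of $N$, and $k$-jet ampleness implies $k'$-jet ampleness for every $0 \le k' \le k$). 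Under the identification $\cL_x/\fm_x^2 \cong J^1(\cL)_x$ used after \cref{defn:jet-ample}, jet ampleness then says the evaluation
\[
\Gammahol(\cL) \lra \bigoplus_{j=1}^{r} J^1(\cL)_{x_j} \;\oplus\; \bigoplus_{i=0}^{s} J^1(\cL)_{y_i}
\]
is surjective; restricting to the subspace $\Gammahol(0)$ killing the first $r$ summands shows the linear part of $\Psi$ surjects onto $\bigoplus_i J^1(\cL)_{y_i}$, as required. The same surjectivity, applied to the $r$ points $x_j$ alone, shows $\Gammahol(\vec v)$ is a nonempty affine space of complex dimension $\dim_\bC \Gammahol(\cL) - r(n+1)$, which is where the stated fiber dimension comes from.

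The only genuine obstacle is this uniform jet-ampleness input together with the elementary (but worth stating) fact that a fiberwise surjective affine bundle map out of a trivial bundle has locally trivial zero locus; everything else is bookkeeping. In particular the $\fS_{s+1}$-descent is routine because the action on $\F^{s+1}(X^\circ)$ is free, and, in contrast with the compact setting of \cite[Section~4.1]{aumonier_h-principle_2022}, the non-compactness of $X^\circ$ causes no difficulty here, since a finite configuration of points of $X^\circ$ automatically admits pairwise disjoint coordinate neighborhoods avoiding $x_1,\dotsc,x_r$ — which is all one needs to patch the local trivializations.
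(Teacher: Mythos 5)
Your proof is correct and takes essentially the same route as the paper: the key input in both is that $(2(r+s+1)-1)$-jet ampleness (from \cref{convention:bigN}, with $s\le N$) makes the evaluation $\Gammahol(\cL)\to\bigoplus_j J^1(\cL)_{x_j}\oplus\bigoplus_i J^1(\cL)_{y_i}$ surjective, so the $(s+1)(n+1)$ linear conditions cutting out the fiber are independent and the fibers form an affine bundle of constant dimension. The only difference is presentational: the paper cites the bundle argument of \cite[Section~4.1]{aumonier_h-principle_2022} and checks only the dimension constancy, whereas you spell out the local triviality of the zero locus of the fiberwise surjective affine bundle map (the same mechanism as \cref{jet-expansion-is-micro-fibration}) and the routine $\fS_{s+1}$-descent.
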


\begin{proof}[Proof sketch]
The main point is to see that the affine spaces $\set{ f \in \Gammahol(\vec{v}) }{ \vec{y} \subset \Sing(f) }$ all have the same dimension regardless of $\vec{y} \in \F^{s+1}(X^\circ)$. They are given by $(s+1)(\dim_\bC X +1)$ equations: a section $f$ is singular at $y_i$ when both the value $f(y_i)$ and all the partial derivatives of $f$ at $y_i$ vanish. In total, this imposes $1 + \dim_\bC X$ equations on $f$. The lemma is then proven if these equations are linearly independent. The line bundle $\cL$ is $(2(r+s+1) -1)$-jet ample by \cref{convention:bigN}, so the evaluation map
\[
\Gammahol(\cL) \lra \bigoplus_{i = 1}^r J^1(\cL)_{x_i} \ \oplus \ \bigoplus_{j = 0}^s J^1(\cL)_{y_i}
\]
is surjective. This directly implies that the considered equations are linearly independent.
\end{proof}

\begin{remark}
The proof above follows very closely the one given for \cite[Lemma~4.2]{aumonier_h-principle_2022}, except for the fact that we also impose derivatives at the fixed $x_i$ to get the existence of sections in $\Gammahol(\vec{v})$. This is reflected by the appearance of the constant $r$ in the required jet ampleness of $\cL$.
\end{remark}

Applying the Thom isomorphism for cohomology with compact supports yields:
\begin{lemma}[{Compare \cite[Proposition~4.3]{aumonier_h-principle_2022}}]
For $0 \leq s \leq N$, we have an isomorphism
\[
E_1^{s,t} \cong \CCH^{t - 2\dim_\bC \Gammahol(\vec{v}) + 2(s+1)(\dim_\bC X+1)}(\F^{s+1}(X^\circ); \bZ^\mathrm{sign})
\]
where $\bZ^\mathrm{sign}$ is the local coefficients system on the configuration space given by the sign representation. \qed
\end{lemma}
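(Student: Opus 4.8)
The plan is to recognize $F_s - F_{s-1}$ as the total space of a real vector bundle over $\F^{s+1}(X^\circ)$ of rank $m = 2\dim_\bC\Gammahol(\vec v) - 2(s+1)(\dim_\bC X + 1) + s$ and then apply the Thom isomorphism in compactly supported Čech cohomology, exactly as in the degree $r = 0$ case \cite[Proposition~4.3]{aumonier_h-principle_2022}; the only genuine content is computing $m$ and identifying the orientation local system. Concretely, I would start from the fiber bundle $p \colon F_s - F_{s-1} \to \F^{s+1}(X^\circ)$ of the previous lemma, whose fiber over $\vec y$ is $\mathring{|\Delta^s|} \times A_{\vec y}$, where $A_{\vec y} \defeq \set{f \in \Gammahol(\vec v)}{\vec y \subset \Sing(f)}$ is a complex affine space of complex dimension $D \defeq \dim_\bC \Gammahol(\vec v) - (s+1)(\dim_\bC X + 1)$ (same dimension for all $\vec y$, by the jet-ampleness input of the previous lemma).

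Next I would upgrade this to a vector bundle. The $\Gammahol(\vec v)$-subbundle with fiber $A_{\vec y}$ is an \emph{affine} bundle — its fibers are cosets of the linear subspaces of sections vanishing to first order along $\vec y$ — and since the base is paracompact Hausdorff it admits a section, hence is isomorphic to an honest complex vector bundle $E_A$ of complex rank $D$. The simplex factor assembles into a flat bundle with fiber $\mathring{|\Delta^s|}$: translating the barycenter to the origin identifies $\mathring{|\Delta^s|}$ with the standard real representation $\RR^s$ of $\fS_{s+1}$, and the monodromy acts through the permutations of the points $y_i$, so this is a flat real vector bundle $E_\Delta$ of rank $s$. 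Thus $F_s - F_{s-1} \cong E_A \oplus E_\Delta$ as a real vector bundle of rank $m = 2D + s$ over $\F^{s+1}(X^\circ)$ (the permutation action on the $y_i$ being absorbed into the $\fS_{s+1}$-quotient in the description $F_s - F_{s-1} \cong Y_s \times_{\fS_{s+1}} \mathring{|\Delta^s|}$).

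Finally I would identify the orientation local system. The bundle $E_A$ is complex, hence canonically oriented, so it contributes trivially. For $E_\Delta$, a loop in $\F^{s+1}(X^\circ)$ realizing a permutation $\sigma \in \fS_{s+1}$ acts on the fiber $H^s_c(\mathring{|\Delta^s|}; \ZZ) \cong H^s_c(\RR^s; \ZZ) \cong \ZZ$ through the action of $\sigma$ on $\det$ of the standard representation, which equals $\sgn(\sigma)$; hence the orientation local system of $E_\Delta$, and therefore of $E_A \oplus E_\Delta$, is $\bZ^{\mathrm{sign}}$. Applying the Thom isomorphism (legitimate since all spaces involved are paracompact, locally compact and Hausdorff by the results already quoted) gives $\CCH^{k}(F_s - F_{s-1}; \ZZ) \cong \CCH^{k-m}(\F^{s+1}(X^\circ); \bZ^{\mathrm{sign}})$, and substituting $k = s+t$ together with $m = 2D + s$ yields $E_1^{s,t} \cong \CCH^{\,t - 2\dim_\bC\Gammahol(\vec v) + 2(s+1)(\dim_\bC X + 1)}(\F^{s+1}(X^\circ); \bZ^{\mathrm{sign}})$.

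The main obstacle is pinning the orientation local system down to precisely $\bZ^{\mathrm{sign}}$: one must track carefully how the $\fS_{s+1}$-quotient simultaneously moves the singular points $y_i$ through $\F^{s+1}(X^\circ)$ and permutes the barycentric coordinates, and then check that the induced action on $H^s_c$ of the fiber is the sign character (and that the complex factor genuinely contributes nothing). Everything else — the passage from affine bundle to vector bundle, the Thom isomorphism, and the dimension bookkeeping — is routine given the fiber-bundle lemma.
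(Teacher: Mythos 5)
Your proposal is correct and is essentially the paper's argument: the paper deduces the lemma by applying the Thom isomorphism in compactly supported cohomology to the fiber bundle $F_s - F_{s-1} \to \F^{s+1}(X^\circ)$ from the preceding lemma, exactly as in \cite[Proposition~4.3]{aumonier_h-principle_2022}, with the orientation local system reducing to $\bZ^{\mathrm{sign}}$ because the complex affine factor is canonically oriented and the open-simplex factor carries the sign of the permutation monodromy. Your write-up just makes explicit the dimension count $2\dim_\bC\Gammahol(\vec v) - 2(s+1)(\dim_\bC X+1) + s$ and the identification of the local system, which the paper leaves implicit.
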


\subsubsection{Cohomology in the column \texorpdfstring{$s = N+1$}{s = N + 1}}

We now turn our attention to the last column on the first page of the spectral sequence. In this case $s = N+1$ and the groups are
\[
    E_1^{N+1,t} = \CCH^{N+1+t}(R^N_\text{cone}\fX - R^N\fX; \bZ).
\]
We shall show that these group vanish for $t$ big enough. More precisely, following \cite[Section~4.2]{aumonier_h-principle_2022} we obtain:
\begin{lemma}[{Compare \cite[Proposition~4.9]{aumonier_h-principle_2022}}]
For $t > 2\dim_\bC \Gammahol(\vec{v}) - 2N - 2$ we have
\[
E_1^{N+1,t} = \CCH^{N+1+t}(R^N_\text{cone}\fX - R^N\fX; \bZ) = 0.
\]
\end{lemma}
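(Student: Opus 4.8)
The plan is to adapt the argument of \cite[Section~4.2]{aumonier_h-principle_2022} (which establishes the analogous vanishing in the case $r = 0$); the two things to check are that the bookkeeping of the $r$ prescribed jets produces exactly the stated range, and that the non-compactness of $X^\circ = X - \{x_1,\dotsc,x_r\}$ does not disrupt the argument. The geometric picture is as follows. By \cref{lemma:resolution-skeletal-pushout} the inclusion $\tau_N^{-1}\bigl(\overline{\Sigma(\vec v)_{\ge N+2}}\bigr) \hookrightarrow R^N\fX$ is a closed embedding, so unwinding the homotopy pushout defining $R^N_\text{cone}\fX$ identifies $R^N_\text{cone}\fX - R^N\fX$ with the mapping cylinder of $\tau_N \colon \tau_N^{-1}\bigl(\overline{\Sigma(\vec v)_{\ge N+2}}\bigr) \to \overline{\Sigma(\vec v)_{\ge N+2}}$ with its source end removed; as a space over $\overline{\Sigma(\vec v)_{\ge N+2}}$ this is an open-cone bundle whose fiber over $f$ is the open cone on $\tau_N^{-1}(f)$, so that $\CCH^{q}$ of that fiber equals $\widetilde H^{q-1}\bigl(\tau_N^{-1}(f)\bigr)$. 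The method of \cite[Section~4.2]{aumonier_h-principle_2022} bounds $\CCH^{*}$ of this total space by assembling a fiberwise vanishing bound with a dimension bound for the base; I carry out the same two estimates in our setting.

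For the fiber, $\tau_N^{-1}(f)$ is the $N$-truncated realization $\int^{[n]\in\catF_{\le N}} \Sing(f)^{n+1}\times|\Delta^n|$ of a realization that becomes contractible once the truncation is removed (it acquires an extra degeneracy); the resulting bound $\widetilde H^{i}\bigl(\tau_N^{-1}(f)\bigr) = 0$ for $i > N$ is exactly \cite[Section~4.2]{aumonier_h-principle_2022}, and it carries over without change because $\Sing(f)$ is still compact: any $f \in \Sigma(\vec v)$ has non-zero first jet at each $x_i$, so $\Sing(f) = \{\, y \in X : j^1 f(y) = 0 \,\}$ is closed in the compact variety $X$. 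For the base, use $\Sigma(\vec v)_{\ge N+2} \subseteq \Sigma(\vec v)_{\ge N+1}$ and bound $\dim_\bC \overline{\Sigma(\vec v)_{\ge N+2}}$ by the dimension of the incidence variety $\{\, (f, y_0, \dotsc, y_N) \in \Gammahol(\vec v) \times \F^{N+1}(X^\circ) : y_j \in \Sing(f) \,\}$. Projecting to $\F^{N+1}(X^\circ)$, the fiber is the linear subspace of $\Gammahol(\vec v)$ cut out by the vanishing of $j^1 f$ at the $N+1$ points $y_j$, and since $\cL$ is $(2(r+N+1)-1)$-jet ample by \cref{convention:bigN} the evaluation $\Gammahol(\cL) \to \bigoplus_{i=1}^{r} J^1(\cL)_{x_i} \oplus \bigoplus_{j=0}^{N} J^1(\cL)_{y_j}$ is surjective, so those $N+1$ jet conditions remain independent on $\Gammahol(\vec v)$ and cut it down by $(N+1)(n+1)$ complex dimensions. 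Hence $\dim_\bC \overline{\Sigma(\vec v)_{\ge N+2}} \le (N+1)n + \bigl(\dim_\bC \Gammahol(\vec v) - (N+1)(n+1)\bigr) = \dim_\bC \Gammahol(\vec v) - (N+1)$.

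Assembling the two bounds as in \cite[Proposition~4.9]{aumonier_h-principle_2022}, the open-cone fibers contribute to $\CCH^{q}$ only for $q \le N+1$ while the base has real dimension at most $2\dim_\bC \Gammahol(\vec v) - 2N - 2$, so $\CCH^{j}\bigl(R^N_\text{cone}\fX - R^N\fX\bigr) = 0$ for $j > 2\dim_\bC \Gammahol(\vec v) - N - 1$, i.e. for $j = N+1+t$ with $t > 2\dim_\bC \Gammahol(\vec v) - 2N - 2$, as claimed. I expect the main obstacle to be the fiberwise vanishing when $\Sing(f)$ is positive dimensional -- this is the delicate Vassiliev-style point that \cite{aumonier_h-principle_2022} handles -- and, secondarily, confirming that the compactly-supported-cohomology estimates there (for cone bundles that need not be proper over their base) go through with $X^\circ$ in place of $X$; the properness and paracompactness facts they rely on are already supplied by \cref{lemma:rhonisproper,prop:acyclicresolution}.
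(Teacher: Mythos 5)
Your base estimate is fine and matches the paper (the incidence variety with $N+1$ marked singular points has complex dimension $\dim_\bC \Gammahol(\vec v) - (N+1)$ by the jet ampleness of \cref{convention:bigN}, and it dominates $\overline{\Sigma(\vec v)_{\geq N+2}}$), but the fiberwise half of your assembly contains a genuine gap. The fiber $\tau_N^{-1}(f)$ is the truncated realization $\int^{[n]\in\catF_{\leq N}}\Sing(f)^{n+1}\times|\Delta^n|$, which is the $(N+1)$-fold join of $\Sing(f)$; its reduced cohomology vanishes above degree $N$ only when $\Sing(f)$ is finite (a zero-dimensional set, where the join is a wedge of $N$-spheres). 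For $f \in \overline{\Sigma(\vec v)_{\geq N+2}}$ the singular locus can be positive dimensional, and then the join has cohomology up to degree roughly $(N+1)\dim_\bR\Sing(f) + N$, so the claimed bound $\widetilde H^{i}(\tau_N^{-1}(f)) = 0$ for $i > N$ is simply false; compactness of $\Sing(f)$ (which you correctly note) gives properness, not a cohomological dimension bound. Moreover, the reference you lean on does not ``handle'' this fiberwise statement, because it never proves one: the point of the argument in \cite[Section~4.2]{aumonier_h-principle_2022} is precisely to avoid any bound on the individual fibers.

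What the paper does instead is stratify $R^N_\text{cone}\fX - R^N\fX$ into the pieces $\mathrm{Str}_{-1} = \overline{\Sigma(\vec v)_{\geq N+2}}$ and $\mathrm{Str}_j \cong \big(Y_j^{\geq N+2}\times_{\fS_{j+1}}\mathring{|\Delta^j|}\big)\times(0,1)$ for $0 \leq j \leq N$, and bound the \emph{total} dimension of each stratum: $Y_j^{\geq N+2}$ is semi-algebraic and receives an algebraic map with dense image from the incidence variety with $N+1$ marked singular points, so $\dim_\bR Y_j^{\geq N+2} \leq 2\dim_\bC\Gammahol(\vec v) - 2(N+1)$ \emph{for every} $j$, even for the sections whose singular loci are large --- the heavy constraint is on the locus of such $f$ inside $\Gammahol(\vec v)$, not on the cohomology of each cone fiber. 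Adding $j \leq N$ for the open simplex and $1$ for the cone coordinate gives real dimension at most $2\dim_\bC\Gammahol(\vec v) - N - 1$ for each stratum, and the vanishing of $\CCH^{N+1+t}$ for $t > 2\dim_\bC\Gammahol(\vec v) - 2N - 2$ follows from the fact that compactly supported cohomology vanishes above the dimension. Your final numbers coincide with the paper's, but to make your argument correct you must replace the fiber-plus-base assembly by this stratification-and-dimension count (or equivalently stratify the base by the dimension of $\Sing(f)$ and redo the count there, which amounts to the same thing).
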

\begin{proof}[Proof sketch]
The proof of \cite[Proposition~4.9]{aumonier_h-principle_2022} actually applies to this situation, but we sketch the main ideas in the particular case at hand. By construction, the space $R^N_\text{cone}\fX - R^N\fX$ is the fiberwise (for the map $\tau_N \colon R^N\fX \to \Sigma(\vec{v})$) open cone over $\overline{\Sigma(\vec{v})_{\geq N+2}}$. It can be stratified by
\begin{align*}
\mathrm{Str}_{-1} & := \overline{\Sigma(\vec{v})_{\geq N+2}},                                                                                                                                                \\
\mathrm{Str}_0    & := \left( \tau_N^{-1}\left(\overline{\Sigma(\vec{v})_{\geq N+2}}\right) \times (0,1) \right)  \cap  \left( R^0\fX \times (0,1)\right),                                                   \\
\mathrm{Str}_j    & := \left( \tau_N^{-1}\left(\overline{\Sigma(\vec{v})_{\geq N+2}}\right) \times (0,1) \right)  \cap  \left( (R^j\fX - R^{j-1}\fX) \times (0,1)\right) \quad \text{ for } 1 \leq j \leq N.
\end{align*}
Furthermore, the homeomorphism~\eqref{eqn:differencestepsfiltration} shows that for $0 \leq j \leq N$ we have a homeomorphism
\[
\mathrm{Str}_j \cong \left( Y_j^{\geq N+2} \times_{\fS_{j+1}} \mathring{|\Delta^j|} \right) \times (0,1)
\]
where
\[
Y_j^{\geq N+2} := \set{ (f,y_0,\dotsc,y_j) \in \Gammahol(\vec{v}) \times \F^{j+1}(X^\circ) }{ f \in \overline{\Sigma(\vec{v})_{\geq N+2}} \text{ and } y_i \in \mathrm{Sing}(f) }.
\]
One sees that this latter space $Y_j^{\geq N+2}$ is a real semi-algebraic set and that the natural forgetful map
\[
\set{ (f,y_0, \dotsc, y_N) \in \Sigma(\vec{v})_{\geq N+2} \times \F^{N+1}(X^\circ) }{ y_i \in \mathrm{Sing}(f)  } \lra Y_j^{\geq N+2}
\]
is algebraic and has dense image. This implies that the dimension of $Y_j^{\geq N+2}$ is at most that of the space on the left-hand side. One computes it to be at most $2\dim_\bC \Gammahol(\vec{v}) - 2(N+1)$ (see \cite[Lemma~4.8]{aumonier_h-principle_2022}), implying that all the strata have dimension at most $2\dim_\bC \Gammahol(\vec{v}) - N - 1$. Therefore the compactly supported cohomology of their union vanishes above this dimension, ie $\CCH^{N+1+t}(R^N_\text{cone}\fX - R^N\fX; \bZ) = 0$ whenever $N+1+t > 2\dim_\bC \Gammahol(\vec{v}) - N - 1$.
\end{proof}

\subsection{From holomorphic to continuous sections}

In the previous sections, we have constructed a spectral sequence converging to the homology of $U(\vec{v})$ and have described some features of its first page. We would like to do the same for $U_{\cC^0}(\vec{v})$ as well as provide a morphism of spectral sequences that is an isomorphism in a range on the first page. But the space of continuous sections of $J^1(\cL)$ is not finite dimensional, hence Alexander duality cannot be applied directly. This problem can be remedied by introducing a growing filtration
\[
U(\vec{v}) \tolabel{j^1}  U_0(\vec{v}) \lra U_1(\vec{v}) \lra \cdots \lra \colim\limits_{k \to \infty} U_k(\vec{v}) \equivto U_{\cC^0}(\vec{v})
\]
where every map $U_k(\vec{v}) \to U_{k+1}(\vec{v})$ is shown to be a homology isomorphism in a range using a spectral sequence similar to the one above, and the colimit of the $U_k(\vec{v})$ is homotopy equivalent to $U_{\cC^0}(\vec{v})$.

\subsubsection{Definition of the filtration}

We follow \cite[Section~5]{aumonier_h-principle_2022} to describe roughly how the spaces $U_k(\vec{v})$ are constructed, but refer to that article for the full details. The main idea is to consider the complex conjugate (or equivalently the dual) line bundle $\overline{\cL}$ of $\cL$. Taking the complex conjugates of the values of a section gives an $\bR$-linear morphism:
\[
\overline{\cdot} \colon \Gamma_{\cC^0}(\cL) \lra \Gamma_{\cC^0}(\overline{\cL}).
\]
For a complex vector space $V$, we will denote by $\overline{V}$ the complex vector space with the same underlying abelian group, but where the $\bC$-module structure is given by multiplication by the complex conjugate. Complex conjugation thus gives a $\bC$-linear morphism $\overline{\Gammahol(\cL)} \to \Gamma_{\cC^0}(\overline{\cL})$. As the tensor product $\cL \otimes \overline{\cL}$ is the trivial line bundle $X \times \bC$ one can consider the multiplication map:
\begin{equation}\label{eqn:multiplicationbycomplexconjugate}
\Gammahol(\cL) \otimes_\bC \overline{\Gammahol(\cL)} \subset \Gamma_{\cC^0}(\cL) \otimes_\bC \overline{\Gamma_{\cC^0}(\cL)} \lra \Gamma_{\cC^0}(\cL \otimes \overline{\cL}) \cong \Gamma_{\cC^0}(X \times \bC).
\end{equation}
Likewise, for any integer $k \geq 0$, one gets a multiplication map
\[
\mu_k \colon \Gammahol((J^1(\cL)) \otimes \cL^k) \otimes_\bC \overline{\Gammahol(\cL^k)} \lra \Gamma_{\cC^0}((J^1(\cL)) \otimes \cL^k \otimes \overline{\cL^k}) \cong \Gamma_{\cC^0}(J^1(\cL)).
\]

\begin{definition}
For any integer $k \geq 0$, we define
\[
\Gamma_k(\vec{v}) := \mu_k^{-1}(\Gamma_{\cC^0}(\vec{v}))
\]
and
\[
U_k(\vec{v}) := \mu_k^{-1}(U_{\cC^0}(\vec{v})). \qedhere
\]
\end{definition}
Importantly for us, the space $\Gamma_k(\vec{v})$ is an affine subspace of the finite dimensional complex vector space $\Gammahol((J^1(\cL)) \otimes \cL^k) \otimes_\bC \overline{\Gammahol(\cL^k)}$, hence is itself finite dimensional. We now describe the maps $U_k(\vec{v}) \to U_{k+1}(\vec{v})$. Using the triviality of $\cL \otimes \overline{\cL}$, we can choose an element
\[
\eta \in \Gammahol(\cL) \otimes_\bC \overline{\Gammahol(\cL)}
\]
corresponding to the constant function with value $1$ under the multiplication map~\eqref{eqn:multiplicationbycomplexconjugate}. Multiplying sections by this element gives a commutative square:
\[
% https://tikzcd.yichuanshen.de/#N4Igdg9gJgpgziAXAbVABwnAlgFyxMJZABgBpiBdUkANwEMAbAVxiRAB12BxOgW17oALCAwAUogFIA9AIycAxgBkAlAAJOEPL3jr2SqQGs1GrfAD6nAEYBhXRBowATgyxgYwTj35CRohYsNlAF8QINJ0TFx8QhQyGSpaRhY2Tz4BYTFJWX9jdk0sbThdfWADAGoZINz8wot2GzsHZ1d3VO8Mvz0A0oqqkLCI7DwCIhlyBPpmVkQObjS6Mw89ayliIKy5LuVQ8JAMIejR0njqSeSZtoFFhRW1jZzQhJgoAHN4IlAAM0cIXiQyEA4CBIMaJKYpPRQTS6GA4OggagMOiWGAMAAKkWGMRAjiwL0EOB2Xx+f0QAKBSAATKcktNZrwmGYDESQN9fiDqBTEABmGngi7sBmLcqVBEgJEo9GYw4zXH4wkDVkkqmc4E8xHI1EYg4jGYMGCfQnUQQwOhQNiQNxilxWmZQpiWfViuFYBgWgisIIUIJAA
\begin{tikzcd}
\Gammahol((J^1(\cL)) \otimes \cL^k) \otimes_\bC \overline{\Gammahol(\cL^k)} \arrow[d, "\cdot \eta"'] \arrow[r, "\mu_k"] & \Gamma_{\cC^0}(J^1(\cL)) \arrow[d, equal] \\
\Gammahol((J^1(\cL)) \otimes \cL^{k+1}) \otimes_\bC \overline{\Gammahol(\cL^{k+1})} \arrow[r, "\mu_{k+1}"']             & \Gamma_{\cC^0}(J^1(\cL))
\end{tikzcd}\]
This commutativity readily implies that the left vertical map restricts to a map
\[
\cdot \eta \colon U_k(\vec{v}) \lra U_{k+1}(\vec{v}).
\]

\subsubsection{Comparing spectral sequences}

As explained in \cite[Section~5.4]{aumonier_h-principle_2022}, the construction of the spectral sequence and the analysis of its first page can be carried out for the spaces $U_k(\vec{v}) \subset \Gamma_k(\vec{v})$. We summarize the results here:
\begin{proposition}[{Compare \cite[Proposition~5.5]{aumonier_h-principle_2022}}]
Let $N$ be as in \cref{convention:bigN}. For any integer $k \geq 0$, there is a cohomologically-indexed spectral sequence supported on the strip $0 \leq s \leq N+1$ and $t\geq 0$:
\[
E_1^{s,t} \implies \widetilde{H}_{2\dim_\bC \Gamma_k(\vec{v}) - 1 - s - t}(U_k(\vec{v}); \bZ).
\]
When $0 \leq s \leq N$, there is an isomorphism
\[
E_1^{s,t} \cong \CCH^{t - 2\dim_\bC \Gamma_k(\vec{v}) + 2(s+1)(\dim_\bC X+1)}(\F^{s+1}(X^\circ); \bZ^\mathrm{sign})
\]
where $\bZ^\mathrm{sign}$ is the local coefficients system on the configuration space given by the sign representation.
And when $s = N+1$ and $t > 2\dim_\bC \Gamma_k(\vec{v}) - 2N -2$ we have
\[
\pushQED{\qed} %I don't understand this push and pop but the square is aligned correctly...
    E_1^{N+1,t} = 0. \qedhere
\popQED
\]
\end{proposition}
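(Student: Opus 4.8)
The plan is to re-run, for each fixed $k$, the entire construction and analysis of \cref{prop:existenceVassilievSS} with the finite-dimensional complex affine space $\Gammahol(\vec v)$ replaced throughout by the finite-dimensional complex affine space $\Gamma_k(\vec v) \subset \Gammahol(J^1(\cL) \tensor \cL^k) \tensor_\bC \overline{\Gammahol(\cL^k)}$, and with the notion ``$f$ is singular at $y$'' replaced by ``$\mu_k(f)(y) = 0$ in $(J^1(\cL))_y$''. Concretely, for $f \in \Gamma_k(\vec v)$ put $\Sing_k(f) \defeq \set{y \in X^\circ}{\mu_k(f)(y) = 0}$. Since $\mu_k(f)(x_i) = v_i \neq 0$ for every $f \in \Gamma_k(\vec v)$ --- which is exactly what cutting out $\Gamma_k(\vec v) = \mu_k^{-1}(\Gamma_{\cC^0}(\vec v))$ guarantees --- the complement $\Sigma_k(\vec v) \defeq \Gamma_k(\vec v) \setminus U_k(\vec v)$ is precisely the locus of $f$ with $\Sing_k(f) \neq \emptyset$, and Alexander duality applies because $\Gamma_k(\vec v)$ is finite-dimensional, yielding $\CCH^i(\Sigma_k(\vec v)) \cong \widetilde H_{2\dim_\bC \Gamma_k(\vec v) - 1 - i}(U_k(\vec v))$.

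First I would build the functor $\fX_k \colon \catF^\op \to \catTop$ with $\fX_k[n] = \set{(f, y_0, \dotsc, y_n) \in \Gamma_k(\vec v) \times (X^\circ)^{n+1}}{\mu_k(f)(y_i) = 0 \text{ for all } i}$, its geometric realizations $R^j\fX_k$, and the conical modification $R^N_\text{cone}\fX_k$, exactly as before. The purely formal steps --- the skeletal pushout squares of \cref{lemma:resolution-skeletal-pushout}, the homeomorphism \eqref{eqn:differencestepsfiltration}, properness of the maps $\tau_j$, and the compactly supported cohomology isomorphism $\pi \colon R^N_\text{cone}\fX_k \to \Sigma_k(\vec v)$ of \cref{prop:acyclicresolution} --- carry over word for word. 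The only input that used compactness of $X$, namely the analogue of \cref{lemma:rhonisproper}, is recovered by the same tube-lemma argument: if $K \subset \Gamma_k(\vec v)$ is compact then $\set{(f,x) \in K \times X}{\mu_k(f)(x) \neq 0}$ is open and contains $K \times \{x_i\}$ for each $i$, hence contains $K \times V$ for some neighborhood $V$ of $\{x_1, \dotsc, x_r\}$, so the preimage of $K$ under $(f, y_0, \dotsc, y_n) \mapsto f$ is closed in the compact set $K \times (X \setminus V)^{n+1}$ and therefore compact. Filtering $R^N_\text{cone}\fX_k$ by $F_s = R^s\fX_k$ for $0 \le s \le N$ and $F_{N+1} = R^N_\text{cone}\fX_k$ then produces, by the same spectral-sequence-of-a-filtered-space argument as in \cref{prop:existenceVassilievSS}, a first-quadrant spectral sequence supported on $0 \le s \le N+1$ with $E_1^{s,t} = \CCH^{s+t}(F_s - F_{s-1}; \bZ)$ converging to $\CCH^{s+t}(R^N_\text{cone}\fX_k) \cong \widetilde H_{2\dim_\bC \Gamma_k(\vec v) - 1 - s - t}(U_k(\vec v))$.

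For $0 \le s \le N$, the homeomorphism \eqref{eqn:differencestepsfiltration} presents $F_s - F_{s-1}$ as $\set{(f,y_0,\dotsc,y_s) \in \fX_k[s]}{y_l \neq y_m \text{ if } l \neq m} \times_{\fS_{s+1}} \mathring{|\Delta^s|}$, which maps to $\F^{s+1}(X^\circ)$ with fiber $\mathring{|\Delta^s|} \times \set{f \in \Gamma_k(\vec v)}{\mu_k(f)(y_j) = 0,\ j = 0, \dotsc, s}$. The essential point is that this affine subspace has dimension independent of $\vec y$: vanishing of $\mu_k(f)$ at one point imposes $\dim_\bC X + 1$ linear conditions (the rank of $J^1(\cL)$), and these are linearly independent across the $s+1$ points $y_j$ and the $r$ fixed points $x_i$ precisely because the evaluation map
\[
\Gammahol(J^1(\cL) \tensor \cL^k) \tensor_\bC \overline{\Gammahol(\cL^k)} \lra \bigoplus_{i=1}^r (J^1(\cL))_{x_i} \dsum \bigoplus_{j=0}^s (J^1(\cL))_{y_j}
\]
is surjective; this is where the hypothesis that $\cL$ is $(2(r+N+1)-1)$-jet ample (\cref{convention:bigN}) and the bound $s \le N$ enter, via the surjectivity statement for mixed section spaces $\Gammahol(-) \tensor_\bC \overline{\Gammahol(-)}$ established in \cite[Section~5]{aumonier_h-principle_2022}. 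Hence $F_s - F_{s-1}$ is, up to the $\fS_{s+1}$-quotient, the product of an open $s$-simplex with the total space over $\F^{s+1}(X^\circ)$ of a real vector bundle of rank $2\dim_\bC \Gamma_k(\vec v) - 2(s+1)(\dim_\bC X + 1)$, and the Thom isomorphism for compactly supported cohomology, twisted by the sign local system arising from the $\fS_{s+1}$-action permuting simultaneously the simplex vertices and the points $y_j$, gives
\[
E_1^{s,t} \cong \CCH^{t - 2\dim_\bC \Gamma_k(\vec v) + 2(s+1)(\dim_\bC X + 1)}(\F^{s+1}(X^\circ); \bZ^{\mathrm{sign}}).
\]

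For $s = N+1$, the term $F_{N+1} - F_N = R^N_\text{cone}\fX_k - R^N\fX_k$ is the fiberwise open cone over $\overline{\Sigma_k(\vec v)_{\ge N+2}}$ for $\tau_N$, and I would stratify it just as in the holomorphic case (strata indexed by $-1, 0, \dotsc, N$), identify each stratum up to a factor $(0,1)$ and an $\fS_{j+1}$-quotient with $Y_j^{\ge N+2} \times \mathring{|\Delta^j|}$ where $Y_j^{\ge N+2} = \set{(f, y_0, \dotsc, y_j) \in \overline{\Sigma_k(\vec v)_{\ge N+2}} \times \F^{j+1}(X^\circ)}{\mu_k(f)(y_i) = 0}$, and bound $\dim Y_j^{\ge N+2}$. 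The latter is a real semialgebraic set --- the defining equations are real-polynomial because $\mu_k$ is bilinear and the two evaluation maps are holomorphic resp.\ antiholomorphic --- admitting a dominant algebraic map from $\set{(f, y_0, \dotsc, y_N) \in \Sigma_k(\vec v)_{\ge N+2} \times \F^{N+1}(X^\circ)}{\mu_k(f)(y_i) = 0}$, whose dimension is at most $2\dim_\bC \Gamma_k(\vec v) - 2(N+1)$ by the linear-independence count of the previous paragraph with $s = N$. Consequently every stratum of $F_{N+1} - F_N$ has real dimension at most $2\dim_\bC \Gamma_k(\vec v) - N - 1$, so the compactly supported cohomology of $F_{N+1} - F_N$ vanishes above that degree: $E_1^{N+1,t} = \CCH^{N+1+t}(F_{N+1} - F_N) = 0$ once $N+1+t > 2\dim_\bC \Gamma_k(\vec v) - N - 1$, i.e.\ for $t > 2\dim_\bC \Gamma_k(\vec v) - 2N - 2$. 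I expect the main obstacle to be organizational rather than conceptual: one must verify that every lemma of the holomorphic construction survives the substitution of $\Gamma_k(\vec v)$ for $\Gammahol(\vec v)$ and of $\Sing_k$ for $\Sing$ --- in particular that the relevant zero loci remain (real) semialgebraic so the dimension estimates still apply, and that jet ampleness of $\cL$ continues to supply the surjectivity needed for the mixed holomorphic--antiholomorphic section space $\Gammahol(J^1(\cL) \tensor \cL^k) \tensor_\bC \overline{\Gammahol(\cL^k)}$ --- both being exactly the adaptations of \cite[Section~5]{aumonier_h-principle_2022}.
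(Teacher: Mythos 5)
Your proposal is correct and is essentially the paper's own argument: the paper justifies this proposition exactly by rerunning the Vassiliev-type construction of \cref{prop:existenceVassilievSS} (and its $E_1$-analysis) with $\Gamma_k(\vec v)$ in place of $\Gammahol(\vec v)$ and $\mu_k(f)(y)=0$ as the singularity condition, deferring the mixed holomorphic--antiholomorphic surjectivity and semialgebraicity details to \cite[Section~5]{aumonier_h-principle_2022}. Your write-up additionally spells out the tube-lemma properness check and the linear-independence count, which are precisely the adaptations the paper relies on.
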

For any $k \geq 0$ the groups on the first page in the range $0 \leq s \leq N$ are all given in terms of the cohomology of configuration spaces of points in $X^\circ$. The only subtle difference is that these groups are indexed differently, as the degree shift depends on the dimension of $\Gamma_k(\vec{v})$ which varies with $k$. This degree shift is also apparent on the abutment of the spectral sequence. Thus, if we could construct a morphism of spectral sequences shifting the total degree by $\dim_\bC \Gamma_{k+1}(\vec{v}) - \dim_\bC \Gamma_k(\vec{v})$, we would obtain on the abutment a morphism
\[
\widetilde{H}_*(U_k(\vec{v}); \bZ) \lra \widetilde{H}_*(U_{k+1}(\vec{v}); \bZ).
\]
Suppose furthermore that we could construct this morphism of spectral sequences such that it were an isomorphism on the $E_1^{s,t}$ groups when $0 \leq s \leq N$ (which we recall are equal up to this degree shift). Then the vanishing result in the column $s = N+1$ would imply that the morphism on the abutment would be an isomorphism in the range of degrees $* < N$.

In \cite[Section~6]{aumonier_h-principle_2022}, it is explained how to construct this morphism such that the induced morphism $\widetilde{H}_*(U_k(\vec{v}); \bZ) \to \widetilde{H}_*(U_{k+1}(\vec{v}); \bZ)$ is the one induced by $\cdot \eta \colon U_k(\vec{v}) \to U_{k+1}(\vec{v})$ in homology. Likewise, the argument works for the jet map $j^1 \colon U(\vec{v}) \to U_0(\vec{v})$. To sum up, we have:
\begin{proposition}[{Compare \cite[Proposition~6.6]{aumonier_h-principle_2022}}]\label{prop:homologyisoinarange}
Let $N$ be as in \cref{convention:bigN}. Let $k \geq 0$ be an integer. The map $\cdot \eta \colon U_k(\vec{v}) \to U_{k+1}(\vec{v})$ induces an isomorphism in homology in the range of degrees $* < N$. Similarly, the jet map $j^1 \colon U(\vec{v}) \to U_0(\vec{v})$ induces an isomorphism in homology in the same range. \qed
\end{proposition}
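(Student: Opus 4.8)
The plan is to deduce the statement from a comparison of the Vassiliev-type spectral sequences attached in the preceding proposition to the finite-dimensional affine section spaces $\Gamma_k(\vec{v})$, $\Gamma_{k+1}(\vec{v})$ and $\Gammahol(\vec{v})$, following \cite[Section~6]{aumonier_h-principle_2022}. The key observation is that both $\cdot\eta$ and $j^1$ are \emph{affine-linear closed embeddings} of the ambient section spaces: multiplication by $\eta$ is injective because $\eta$ multiplies to the nowhere-vanishing constant section $1$, and $j^1 = (\mathrm{id},\diff)$ is injective on its first component. Writing $c$ for the complex codimension of the image, its normal bundle is trivial of rank $c$ since everything in sight is a complex affine subspace. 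Moreover, since $\mu_{k+1}\circ(\cdot\eta) = \mu_k$ and since, for a holomorphic $f$, the zero locus of the section $j^1(f)$ of $J^1(\cL)$ is exactly $\Sing(f)$, these embeddings carry $\Sigma_k(\vec{v})$ onto $\Sigma_{k+1}(\vec{v})\cap\Gamma_k(\vec{v})$ (resp.\ $\Sigma(\vec{v})$ onto $\Sigma_0(\vec{v})\cap j^1(\Gammahol(\vec{v}))$) and identify the singular locus $\Sing(s)\subset X^\circ$ of a section with that of its image. Hence the functors $\fX$, the latching objects $L_j$, and the conical resolutions $R^N_\text{cone}\fX$ of the two section spaces are related by compatible, filtration-preserving maps covering the embeddings.

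To promote this geometric comparison to a morphism of spectral sequences realizing $(\cdot\eta)_*$ (resp.\ $(j^1)_*$) on the abutment, one inserts a Thom isomorphism along the trivial rank-$c$ normal bundle: one thickens the smaller resolution to a tubular neighbourhood inside the larger one and uses its Thom class, exactly as in \cite[Section~6]{aumonier_h-principle_2022}. This yields a map of the filtered complexes computing $\CCH^*(R^N_\text{cone}\fX)$ of total degree $2c$, hence a morphism of spectral sequences of bidegree $(0,2c)$, which under Alexander duality is dual to the inclusion of complements. On the $E_1$-page, in the columns $0\le s\le N$ both spectral sequences are $\CCH^{\bullet}(\F^{s+1}(X^\circ);\ZZ^{\mathrm{sign}})$ with indexing differing by precisely the dimension shift $2c$ (the degree shift being absorbed by the Thom class), so the comparison map is the identity there and in particular an isomorphism for $0\le s\le N$; outside the band $0\le s\le N+1$ everything vanishes, and in the column $s = N+1$ the $E_1$-terms vanish for $t$ above the bound $2\dim_\bC\Gamma_k(\vec{v}) - 2N - 2$ established in the preceding proposition.

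Combining these, fix a homology class in degree $i < N$. Its Alexander dual lives in total degree $2\dim_\bC\Gamma_k(\vec{v}) - 1 - i$, and the only graded piece of the filtration that could contribute from the column $s = N+1$ sits in $t = 2\dim_\bC\Gamma_k(\vec{v}) - i - N - 2$, which exceeds $2\dim_\bC\Gamma_k(\vec{v}) - 2N - 2$ precisely when $i < N$; the same inequality annihilates the differentials running into and out of the $(N+1)$-st column that are relevant to total degree $2\dim_\bC\Gamma_k(\vec{v}) - 1 - i$. A routine comparison-of-spectral-sequences argument (the comparison map being an isomorphism on $E_1$ in columns $0\le s\le N$, with all relevant differentials matched) then shows it is an isomorphism on $E_\infty$ in those columns and in the relevant total degrees, hence an isomorphism on $\widetilde{H}_i$ for $i < N$. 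This proves the claim for $\cdot\eta\colon U_k(\vec{v})\to U_{k+1}(\vec{v})$; the case of $j^1\colon U(\vec{v})\to U_0(\vec{v})$ is identical, using $\Gammahol(\vec{v})$ in place of $\Gamma_k(\vec{v})$ and $\Gamma_0(\vec{v})$ in place of $\Gamma_{k+1}(\vec{v})$. The jet-ampleness hypotheses needed for the $E_1$-descriptions on both sides are exactly those guaranteed by \cref{convention:bigN}.

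The main obstacle is the second step: constructing the Thom-twisted comparison of the conical resolutions and checking it is compatible with the skeletal filtrations, the differentials, and Alexander duality. This is the technical heart of \cite[Section~6]{aumonier_h-principle_2022}; what has to be verified here is only that the adaptations made in the previous subsections --- working over the non-compact $X^\circ$ and inside an affine rather than a linear ambient space --- do not interfere with it, and they do not, since those modifications affected only the $E_1$-page and the properness statements, both of which have already been re-established.
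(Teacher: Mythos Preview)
Your proposal is correct and follows essentially the same approach as the paper: the paper itself gives no proof beyond the $\qed$ and the preceding paragraph, which points to \cite[Section~6]{aumonier_h-principle_2022} for the construction of the degree-shifted morphism of spectral sequences and then invokes the $E_1$-isomorphism in columns $0 \le s \le N$ together with the vanishing in column $s = N+1$ to deduce the homology isomorphism in degrees $* < N$. Your write-up unpacks exactly this argument --- the affine-linear embeddings, the Thom-class comparison, and the observation that the adaptations to $X^\circ$ and to affine ambient spaces affect only the already re-established $E_1$-descriptions and properness --- which is precisely what the paper leaves implicit.
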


\subsubsection{The Stone--Weierstrass theorem}

In view of Proposition~\ref{prop:homologyisoinarange}, it suffices to show that
\[
\colim\limits_{k \to \infty} U_k(\vec{v}) \lra U_{\cC^0}(\vec{v})
\]
is a weak homotopy equivalence to finish the proof of Theorem~\ref{h-principle-prescribed-derivatives}. The proof is analogous to that given in \cite[Section~7]{aumonier_h-principle_2022}, but using the following version of the Stone--Weierstrass theorem with interpolation.
\begin{theorem}[Stone--Weierstrass]\label{thm:StoneWeiestrass}
Let $E \to B$ be a finite rank real vector bundle over a compact Hausdorff space. Let $A \subset \cC^0(B,\bR)$ be a subalgebra, $\{s_j\}_{j\in J}$ be a set of sections, and $\cA$ be the $A$-module generated by the $s_j$. Let $P = \{b_1,b_2,\dotsc,b_k\} \subset B$ be a finite (possibly empty) set of distinct points, and $V = \{v_1,v_2,\dotsc,v_k\} \subset E$ be vectors $v_i \in E|_{b_i}$ in the fibers above the $b_i$. Define
\[
    \cA^{P,V} := \set{ f \in \cA }{ \forall i, \ f(b_i) = v_i } \subset \cA
\]
and
\[
    \Gamma_{\cC^0}^{P,V}(E) := \set{ f \in\Gamma_{\cC^0}(E) }{ \forall i, \ f(b_i) = v_i} \subset \Gamma_{\cC^0}^{P,V}(E)
\]
to be the subsets of sections with prescribed values at the $b_i$. Suppose that
\begin{enumerate}
    \item the subalgebra $A$ separates the points of $B$: for any $x,y \in B$, there exists $h \in A$ such that $h(x) \neq h(y)$;
    \item for any $x \in B$, there exists $h \in A$ such that $h(x) \neq 0$;
    \item for any $x\in B$, the fiber $E_x$ is spanned by the $s_j(x)$ as an $\bR$-vector space.
\end{enumerate}
Then $\cA^{P,V}$ is dense for the sup-norm (induced by the choice of any inner product on $E$) in the space $\Gamma_{\cC^0}^{P,V}(E)$.
\end{theorem}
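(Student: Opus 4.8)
The plan is to follow the argument of \cite[Section~7]{aumonier_h-principle_2022}, which handles the case $P = \varnothing$, and to graft on a finite-dimensional correction accounting for the prescribed values at $P$. First I would reduce to the scalar Stone--Weierstrass theorem. Using hypothesis (3) and compactness of $B$ one can extract finitely many of the given sections, say $s_{j_1},\dots,s_{j_m}$, which already span every fiber $E_x$: finitely many span a given fiber, hence remain spanning on a neighbourhood (nonvanishing of a determinant), and one passes to a finite subcover. Equivalently the bundle map $\underline{\RR}^m \to E$, $(t_l) \mapsto \sum_l t_l s_{j_l}$, is a surjection of vector bundles; choosing a metric on $\underline{\RR}^m$ makes its kernel a subbundle with orthogonal complement mapping isomorphically to $E$, so the surjection splits continuously and every $f \in \Gamma_{\cC^0}(E)$ can be written $f = \sum_{l=1}^m g_l s_{j_l}$ with $g_l \in \cC^0(B,\RR)$.

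Next I would record two scalar facts. By hypotheses (1) and (2), $A$ is dense in $\cC^0(B,\RR)$ (Stone--Weierstrass for a not-necessarily-unital subalgebra). Moreover the evaluation map $\mathrm{ev}_P \colon A \to \RR^k$, $a \mapsto (a(b_1),\dots,a(b_k))$, is surjective: its image is a linear subspace of $\RR^k$, hence closed, and it is also dense, since $A$ is dense in $\cC^0(B,\RR)$, $\mathrm{ev}_P$ is continuous on $\cC^0(B,\RR)$, and $\mathrm{ev}_P$ is clearly onto $\RR^k$ when applied to all of $\cC^0(B,\RR)$. In particular there exist $e_1,\dots,e_k \in A$ with $e_i(b_{i'}) = \delta_{i i'}$.

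With these in hand the proof is the expected ``approximate, then correct'': given $f = \sum_l g_l s_{j_l} \in \Gamma_{\cC^0}^{P,V}(E)$ and $\varepsilon > 0$, choose $a_l \in A$ with $\norm{a_l - g_l}$ small and set $f_0 = \sum_l a_l s_{j_l} \in \cA$, which is uniformly close to $f$. The discrepancy $w_i := v_i - f_0(b_i) \in E_{b_i}$ is then small, and since the $s_{j_l}(b_i)$ span $E_{b_i}$ I can write $w_i = \sum_l c_{i,l} s_{j_l}(b_i)$ with $\abs{c_{i,l}}$ controlled by (a uniform constant over the finitely many $b_i$ times) $\abs{w_i}$, hence small. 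Then $a'_l := a_l + \sum_{i} c_{i,l} e_i \in A$ yields $g := \sum_l a'_l s_{j_l} \in \cA$ with $g(b_{i'}) = f_0(b_{i'}) + w_{i'} = v_{i'}$ for every $i'$, so $g \in \cA^{P,V}$; and $\norm{g - f_0}$ stays small because the $c_{i,l}$ are, whence $\norm{g - f} < \varepsilon$.

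The only step that is not routine bookkeeping is the surjectivity of $\mathrm{ev}_P$ on the possibly non-unital algebra $A$: this is precisely where hypotheses (1) and (2) are used in their full strength, via Stone--Weierstrass without a unit and the closedness of a finite-dimensional linear subspace of $\RR^k$. Once the interpolating elements $e_i$ and the finite spanning family $s_{j_l}$ are produced, the remaining estimates are straightforward, and they also show that the conclusion holds with the sup-norm attached to any fixed inner product on $E$ (different inner products give equivalent sup-norms on sections over the compact $B$).
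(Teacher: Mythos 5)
Your proof is correct, and its skeleton is the same as the paper's: use hypothesis (3) plus compactness to extract finitely many sections spanning every fiber, split the resulting surjection $\underline{\RR}^m \to E$ to write any continuous section as $\sum_l g_l s_{j_l}$ with $g_l \in \cC^0(B,\RR)$, and then reduce to scalar Stone--Weierstrass. The one place you diverge is the treatment of the prescribed values at $P$: the paper simply invokes the known scalar \emph{interpolation} variant of Stone--Weierstrass (citing Deutsch), applied to the coefficient functions, whereas you re-derive that interpolation from the plain non-unital Stone--Weierstrass theorem, by showing the evaluation map $\mathrm{ev}_P \colon A \to \RR^k$ is surjective (its image is a linear, hence closed, subspace which is dense because $A$ is dense in $\cC^0(B,\RR)$), producing elements $e_i \in A$ with $e_i(b_{i'}) = \delta_{ii'}$, and then performing an explicit ``approximate, then correct'' step with coefficients $c_{i,l}$ controlled uniformly over the finitely many $b_i$. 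This buys self-containedness (no external interpolation theorem is needed) at the cost of a little bookkeeping; the paper's route is shorter but outsources exactly the step you prove by hand. Your estimates are sound: the $e_i$ and the finite spanning family are fixed in advance, so the correction term is controlled by the discrepancies $w_i$, which are small in the sup-norm, and membership of the corrected section in $\cA^{P,V}$ is immediate since $A$ is closed under the relevant linear operations.
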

%See MO:371614
\begin{proof}[Sketch of a proof] The theorem follows from the original Stone--Weierstrass theorem for functions from a compact space to the real line when the set $P$ is empty, and its variations allowing interpolation in general (see e.g. \cite[Theorem~1]{deutsch_simultaneous_1966}). Indeed, by compactness, we may find a finite number of sections $s_1, \dotsc, s_n$ such that $s_1(x), \dotsc, s_n(x)$ span the fiber at each $x\in B$. Then $\cA$ contains all sections of the form $a_1 s_1 + \cdots + a_n s_n$ for $a_i \in A$, and every continuous section of $E$ can be written as $f_1 s_1 + \cdots + f_n s_n$ with $f_i \in \cC^0(B,\bR)$. We may finally use the usual Stone--Weierstrass theorem, or its adaptation with interpolation, for the functions $f_i$.  
\end{proof}

\printbibliography

\end{document}